%version 6 - created by Dima on Sep 28, 2015 to put on the arXiv, after reading proofs. Lemma 2.9 was corrected. No effects to the rest of the paper.
%version 5 - created by Dima on Sep 8, 2015 to put on the arXiv, after reading proofs. The argument in Lemma 6.3 was corrected - we %abandoned the Gelfand pair argument and argued directly with root systems and Cartan-Helgason.
%version 4.2 - created by Dima on Sep 2, 2015, after reading proofs. One explanation inserted in Lemma 5.1
%version 4.1 - created by Dima on Feb 17, 2015. Correction in the proof of Theorem 2.2. It was corrected from version 4, not version4TG.
%version 4 - created by Dima on Sep 21, 2014 following the referee reports. Exposition improved, bibliography enriched, typos
%corrected.
%version 3 - created by Dima on May 12, 2014 to replace v2 on the arXiv and to send to Transformation Groups. Exposition improved,
%typos corrected.
%version 2 - created by Dima on May 12, 2014 to replace v1 on the arXiv. Includes Eitan's correction to the abstract and introduction.
%Also, replaced J_k by \omega_k in section 3.
%version 1 - created by Dima on May 11, 2014 to put on the arXiv
%Corrections by Eitan
\documentclass[10pt,a4paper]{amsart}
\usepackage{amsfonts,amsmath,amssymb}
\usepackage{hyperref}
\usepackage[all]{xy}

\usepackage{amssymb,amsthm,amsxtra}
\usepackage[usenames]{color}

\usepackage{amscd}
\usepackage{amsthm}
\usepackage{amsfonts}
\usepackage{amssymb}
\usepackage{mathrsfs}
\usepackage{enumerate}

\newtheorem*{theorem*}{Theorem}
\newtheorem*{remark*}{Remark}
\newtheorem{lemma}{Lemma}[section]

\newtheorem{remark}[lemma]{Remark}

\newtheorem{theorem}[lemma]{Theorem}
\newtheorem{definition}[lemma]{Definition}
\newtheorem{notation}[lemma]{Notation}

\newtheorem*{conjecture*}{Conjecture}

\newtheorem{thm}[lemma]{Theorem}
\newtheorem{prop}[lemma]{Proposition}
\newtheorem{lem}[lemma]{Lemma}

\newtheorem{cor}[lemma]{Corollary}

\newtheorem{rem}[lemma]{Remark}

\newtheorem{introtheorem}{Theorem}

\oddsidemargin=0cm
\evensidemargin=0cm
\baselineskip 18pt \textwidth 16cm \sloppy \theoremstyle{plain}

\newcommand{\tr}{\operatorname{Tr}}

\newcommand{\Hom}{\operatorname{Hom}}

\newcommand{\diag}{\operatorname{diag}}
\newcommand{\sgn}{\operatorname{sgn}}

\newcommand{\cc}{\mathbb{C}}

\newcommand{\rr}{\mathbb{R}}

\newcommand{\eps}{\varepsilon}

\newcommand{\Ind}{\operatorname{Ind}}

\newcommand{\re}{\operatorname{Re}}

\newcommand{\Z}{{\mathbb Z}}
\newcommand{\bZ}{{\mathbb Z}}

\newcommand{\R}{{\mathbb R}}
\newcommand{\bR}{{\mathbb R}}
\newcommand{\C}{{\mathbb C}}

\newcommand{\Span}{{\operatorname{Span}}}

\newcommand{\alp}{{\alpha}}

\newcommand{\Fre}{{Fr\'{e}chet \,}}

\newcommand{\Fou}{{\mathcal{F}}}

\newcommand{\Supp}{\mathrm{Supp}}

\newcommand{\GL}{\operatorname{GL}}
\newcommand{\Mat}{\operatorname{Mat}}
\newcommand{\oO}{\operatorname{O}}

\newcommand{\U}{\operatorname{U}}
\newcommand{\Sp}{\operatorname{Sp}}

\newcommand{\size}{\operatorname{size}}
\newcommand{\SL}{\operatorname{SL}}
\newcommand{\Id}{\operatorname{Id}}
\newcommand{\gl}{{\mathfrak{gl}}}

\newcommand{\Fr}{\operatorname{Fr}}

\newcommand{\Sym}{\operatorname{Sym}}

\newcommand{\Sc}{{\mathcal S}}

\newcommand{\sign}{\operatorname{sgn}}

\newcommand{\oQ}{\overline{Q}}

\newcommand{\fp}{\mathfrak{p}}
\newcommand{\fq}{\mathfrak{q}}
\newcommand{\fn}{\mathfrak{n}}

\newcommand{\cN}{\mathcal{N}}

\newcommand{\chH}{\widehat{H}}

\newcommand{\EitanA}[1]{{{#1}}}

\newcommand{\Dima}[1]{{{#1}}}
\newcommand{\DimaB}[1]{{{#1}}}
\newcommand{\DimaA}[1]{{{#1}}}
\newcommand{\DimaC}[1]{{{#1}}}
\newcommand{\DimaD}[1]{{{#1}}}
\newcommand{\dd}{\gamma}
\begin{document}

\author{Dmitry Gourevitch}
\address{Dmitry Gourevitch, Faculty of Mathematics and Computer Science, Weizmann
Institute of Science, POB 26, Rehovot 76100, Israel }
\email{dimagur@weizmann.ac.il}
\urladdr{http://www.wisdom.weizmann.ac.il/~dimagur}

 \author{Siddhartha Sahi}
 \address{Siddhartha Sahi,
 Department of Mathematics,
 Rutgers University,
 Hill Center - Busch Campus,
 110 Frelinghuysen Road
 Piscataway, NJ 08854-8019, USA}
 \email{sahi@math.rugers.edu}

 \author{Eitan Sayag}
 \address{Eitan Sayag,
 Department of Mathematics,
Ben Gurion University of the Negev,
P.O.B. 653,
Be'er Sheva 84105,
ISRAEL}
 \email{eitan.sayag@gmail.com}

\date{\today}
\title%[Functionals on the Speh representation]
{Invariant Functionals on \DimaA{Speh Representations}}
%\date{\today}
%\keywords{Multiplicity one, Gelfand pairs, symmetric pairs, Luna
%Slice Theorem, invariant distributions, Harish-Chandra descent, uniqueness of linear periods. \\
%\indent MSC Classes: 20C99, 20G05, 22E45, 22E50, 46F10, 14L24,
%14L30}
%
%\classification{22E,22E45,20G05,20G25,46F99}
%
%20-xx  Group theory and generalizations 20Cxx Representation
%theory of groups 20C99 None of the above, but in this section
%
%20Gxx Linear algebraic groups (classical groups) 20G05
%Representation theory 20G25 Linear algebraic groups over local
%fields and their integers
%
%
%22-xx  Topological groups, Lie groups 22Exx Lie groups 22E45
%Representations of Lie and linear algebraic groups over real
%fields: analytic methods {For the purely algebraic theory, see
%20G05} 22E50 Representations of Lie and linear algebraic groups over
%local fields
%
%
%46-xx  Functional analysis 46Fxx Distributions, generalized
%functions, distribution spaces 46F10 Operations with distributions
%
%14-xx  Algebraic geometry 14Lxx Algebraic Groups 14L24 Geometric
%invariant theory [See also 13A50] 14L30 Group actions on varieties
%or schemes (quotients) [See also 13A50, 14L24]
%
\subjclass[2010]{20G05,20G20,22E45,46T30}
\begin{abstract}
\DimaA{We study $\Sp_{2n}(\R)$-invariant functionals on the spaces of smooth vectors in  Speh representations of $\GL_{2n}(\R).$

For even $n$ we give  expressions for such invariant functionals using an explicit realization of the space of smooth vectors in the
Speh representations.
Furthermore, we show that the functional we construct is, up to a constant, the unique  functional on the Speh representation which
is  invariant under the Siegel parabolic subgroup of  $\Sp_{2n}(\R)$.
For odd $n$ we show that the Speh representations do not admit an invariant functional with respect to the subgroup $\U_n$ of
$\Sp_{2n}(\R)$ consisting of unitary matrices.

Our construction, combined with the argument in \cite{GOSS}, gives a purely local and explicit construction of Klyachko models for
all unitary representations of $\GL_{n}(\R)$.}
\end{abstract}

\maketitle

%\tableofcontents

\section{Introduction}

\DimaA{
In recent years, there has been considerable interest in periods of
automorphic forms in relation to the Langlands program and equidistribution
problems (\cite{SV,V}). The study of periods admits a local counterpart: the
study of invariant linear functionals and the concomitant
notion of \emph{distinction} of a representation $\pi$ of a reductive group
$G$ with respect to a subgroup $H\subset G$. We recall that a representation
$\pi$ is called \textbf{distinguished} with respect to a subgroup $H\subset G$
if the \textbf{multiplicity space}
$\Hom_{H}(\pi^{\infty},\cc)$ of $H$-invariant continuous functionals on the space $\pi^{\infty}$ of smooth vectors of $\pi$ is
non-zero.
%We recall that the representation $\pi$ is called
%{\bf distinguished} with respect to a subgroup $H \subset G$ if
%$\Hom_{H(F)}(\pi^{\infty},\cc) \ne 0.$
%Recently, the authors studied representations of $GL_{2n}
In many interesting cases the pair $(G,H)$ is a Gelfand pair,
which means that the dimension of  the multiplicity space is at most one for any irreducible admissible representation $\pi$ of $G$.
This allows one to connect the global period integral to local linear
functionals.
%The current paper has to do with the pair $Sp_{2n}(F) \subset GL_{2n}(F)$, which was proven to be a Gelfand pair in \cite{OSUn,AS,S}.
%In recent years,
%notion of  distinction of a representation $\pi$
%of a reductive group $G(F)$ with respect to a subgroup $H(F)
%\subset G(F)$.
Motivated by the work of Jacquet-Rallis \cite{JR} and Heumos-Rallis \cite{HR},
the third author together with O. Offen classified in
\cite{OSDist,OSConst,OSUn,OSsl2} those unitary representations of
$\operatorname{GL}_{2n}(F)$ that are distinguished with respect to the
subgroup $\operatorname{Sp}_{2n}(F)$, in the case that $F$ is a
non-archimedean local field. The case of archimedean $F$ was treated
subsequently in \cite{GOSS,AOS}. We remark that the pair $\operatorname{Sp}%
_{2n}(F)\subset\operatorname{GL}_{2n}(F)$ is a Gelfand pair (see
\cite{OSUn,AS,S}).

The classification of $\Sp_{2n}(\rr)$-distinguished unitary
representations of $\GL_{2n}(\rr)$ involves the family of
unitary representations discovered by B. Speh (\cite{Speh}). We recall that
these unitary representations and their generalizations to $\operatorname{GL}%
_{n}(F)$, where $F$ is a local field, play a central role in the Tadic-Vogan
classification of the unitary dual of $\operatorname{GL}_{n}(F)$. \Dima{To describe this classification} we use the
Bernstein-Zelevinsky notation $\pi_{1}\times\pi_{2}$ for (normalized)
parabolic induction from $\operatorname{GL}_{n_{1}}(F)\times\operatorname{GL}%
_{n_{2}}(F)$ to $\operatorname{GL}_{n_{1}+n_{2}}(F)$. For a discrete
series representation $\sigma$ of $\operatorname{GL}_{r}(F)$ we denote by
$U(\sigma,n)$ the corresponding Speh representation of $\operatorname{GL}%
_{nr}(F)$, and by
\[
\pi(\sigma,n,\alpha):=U(\sigma,n)|\cdot|^{\alpha}\times U(\sigma
,n)|\cdot|^{-\alpha},\text{ }0<\alpha<\Dima{\frac{1}{2}}
\]
the corresponding Speh complementary series representation.

Then any irreducible unitary representation of $\operatorname{GL}_{m}(F)$ can
be written in the form%
\begin{equation}
\pi=\pi_{1}\times\cdots\times\pi_{k},\label{=pi-prod}%
\end{equation}
where each $\pi_{i}$ is either a $U(\sigma_{i},n_{i})$ or a $\pi\left(
\sigma_{i},n_{i},\alpha_{i}\right)  $, and such an expression is unique up to
reordering of the $\pi_{i}$ (see \cite{Tad,Vog}). The answer to the
distinction is summarized in the next theorem, which in the archimedean case
is a combination of \cite[Theorem A]{GOSS} and \cite[Theorem 1.1]{AOS}.

\begin{theorem*}
If $\pi$ is an irreducible unitary representation of $\operatorname{GL}_{2n}(F)$  as in (\ref{=pi-prod}), then $\pi$ is
$\Sp_{2n}(F)$-distinguished iff all the $n_{i}$ are even.
\end{theorem*}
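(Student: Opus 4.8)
In the non-archimedean case this is the Offen--Sayag classification \cite{OSDist,OSConst,OSUn,OSsl2}, so the plan I describe is for the archimedean case. The idea is to reduce, via the hereditary behaviour of symplectic distinction under parabolic induction, to a statement about a single Speh representation, and then to settle the two directions separately: an explicit construction of the functional when all $n_i$ are even, and a compact-group obstruction when some $n_i$ is odd.

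\emph{Reduction to Speh building blocks.} First I would invoke the propagation lemmas for $\Sp$-distinction. On the positive side, if $\tau_j$ is an $\Sp_{m_j}(\R)$-distinguished representation of $\GL_{m_j}(\R)$ for $j=1,2$ (so each $m_j$ is even), then $\tau_1\times\tau_2$ is $\Sp_{m_1+m_2}(\R)$-distinguished: the functional comes from restricting a section to the closed $\Sp_{m_1+m_2}(\R)$-orbit on the relevant flag variety, whose stabilizer meets the Siegel-type Levi $\GL_{(m_1+m_2)/2}(\R)$ in $\GL_{m_1/2}(\R)\times\GL_{m_2/2}(\R)$ (Heumos--Rallis \cite{HR} $p$-adically, \cite{GOSS} over $\R$). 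Moreover, since every element of $\Sp$ has determinant $1$, the twist $|\cdot|^{\alpha_i}$ is trivial on the symplectic group, so $\pi(\sigma_i,n_i,\alpha_i)=U(\sigma_i,n_i)|\cdot|^{\alpha_i}\times U(\sigma_i,n_i)|\cdot|^{-\alpha_i}$ is $\Sp$-distinguished iff $U(\sigma_i,n_i)$ is. For the converse one uses that $(\GL_{2n}(\R),\Sp_{2n}(\R))$ is a Gelfand pair \cite{OSUn,AS,S} together with the Bruhat analysis of the $\Sp_{2n}(\R)$-orbits on the flag variety attached to (\ref{=pi-prod}): this shows that any contribution to $\Hom_{\Sp_{2n}(\R)}(\pi^\infty,\cc)$ already comes from factors that are $\Sp$-distinguished Speh representations --- the key point being that the ``open cell'' contribution $\tau\times\widetilde\tau$ available for the pair $\GL_n\times\GL_n\subset\GL_{2n}$ does \emph{not} persist for the symplectic pair. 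After this reduction the theorem becomes the assertion that for a discrete series $\sigma$ of $\GL_r(\R)$ the Speh representation $U(\sigma,n)$ is $\Sp$-distinguished iff $n$ is even.

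\emph{The ``if'' direction ($n$ even).} Here I would exhibit the functional explicitly, as the abstract announces. Realize $U(\sigma,n)^\infty$ as a space of smooth sections on the partial flag variety of type $(r,\dots,r)$ of a bundle built from $\sigma$, i.e.\ inside the degenerate principal series $\sigma|\cdot|^{(n-1)/2}\times\cdots\times\sigma|\cdot|^{-(n-1)/2}$. Pair this model against the closed $\Sp$-orbit that groups the $n$ blocks into $n/2$ symplectically dual pairs; integrating a section over a fundamental domain for the stabilizer, one is left, after Fubini, with an integral against the discrete series matrix coefficients of $\sigma$ twisted by the appropriate modulus character, and the evenness of $n$ is precisely what makes the block-pairing compatible with that twist. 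The two substantive points are: (a) absolute convergence, which I would deduce from the decay of sections in this model together with the integrability properties of discrete series matrix coefficients; and (b) non-vanishing, which I would check by evaluating on an explicit test section (a lowest $K$-type vector, or a Gaussian-type section). For the uniqueness statement I would take the twisted Jacquet module of $U(\sigma,n)^\infty$ along the unipotent radical of the Siegel parabolic of $\Sp$ (the space of symmetric matrices) against the relevant character, identify these coinvariants in the model, and show the Siegel Levi has a one-dimensional space of invariants there; this refines multiplicity one and feeds into the Klyachko model application. This constructive step is the crux of the whole argument: the real work lies in (a), in (b), and in the Siegel-parabolic uniqueness, the last of which requires control of \emph{all} $\Sp$-orbits --- not just the open one --- on the relevant flag variety.

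\emph{The ``only if'' direction ($n$ odd).} By the hereditary analysis it suffices to show that a Speh representation $U(\sigma,n)$ of $\GL_{2n}(\R)$ with $n$ odd (so $\sigma$ a discrete series of $\GL_2(\R)$) is not $\Sp_{2n}(\R)$-distinguished, and for this it is enough that it is not distinguished by the maximal compact $\U_n\subset\Sp_{2n}(\R)$. Since $\U_n$ is connected and compact, a $\U_n$-invariant functional on $U(\sigma,n)^\infty$ is the same as a trivial isotypic component in the restriction of the Harish-Chandra module of $U(\sigma,n)$ to $\U_n\subset\oO(2n)$. Using the explicit $\oO(2n)$-type decomposition of the Speh representation --- it is cohomologically induced from a $\theta$-stable parabolic, so its $K$-types obey a Blattner-type formula --- together with the branching rule $\oO(2n)\downarrow\U_n$, one shows that the trivial $\U_n$-type occurs iff $n$ is even: the occurring $\oO(2n)$-highest weights satisfy a parity/interlacing constraint, and a highest weight ``balanced'' enough to carry a $\U_n$-fixed vector exists exactly when $n$ is even. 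Hence $U(\sigma,n)$ with $n$ odd is not $\U_n$-distinguished, so not $\Sp_{2n}(\R)$-distinguished, and the hereditary direction propagates this to any $\pi$ as in (\ref{=pi-prod}) with some $n_i$ odd. The delicate point here is the bookkeeping: making the $\oO(2n)$-type decomposition of $U(\sigma,n)$ and its branching to $\U_n$ precise enough to kill the trivial $\U_n$-type for every discrete series $\sigma$ and every odd $n$.
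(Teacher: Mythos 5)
First, note that the paper does not actually reprove this classification: it is quoted as a combination of \cite[Theorem A]{GOSS} and \cite[Theorem 1.1]{AOS} in the archimedean case (and of \cite{OSDist,OSConst,OSUn,OSsl2} in the non-archimedean case), and the paper's own contribution is a purely local proof of the one ingredient you correctly isolate, namely that the Speh representations $\delta_m$ of $\GL_{2n}(\R)$ are $\Sp_{2n}(\R)$-distinguished exactly for even $n$. Your reduction to Speh building blocks is the cited material; your odd-$n$ argument (no $K$-type of $\delta_m$ carries a $\U_n$-fixed vector, via the $\oO(2n)$-type description and branching to $\U_n$) is essentially the paper's Theorem \ref{lem:Ktypes}, Lemma \ref{lem:noUtype} and Corollary \ref{cor:Van}, made precise there by the Cartan--Helgason theorem: for odd $n$ a $\U_n$-spherical type must have $\mu_n=0$, while Speh types have $\mu_n\geq m+1$.

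The genuine gap is in your ``if'' direction. You propose a closed-orbit pairing on the flag variety of type $(2,\dots,2)$ and defer the two real issues, convergence and non-vanishing, to ``evaluating on a test section.'' But $U(\sigma,n)$ is a proper irreducible subquotient (the Langlands quotient) of the induced representation you work in, so a functional defined on the full induced space must be shown to factor through, or restrict nontrivially to, that particular constituent; a test section of the ambient space does not see this, and this is precisely why the only previously known proofs were global (periods of Eisenstein residues). The paper resolves it by a fundamentally different, \emph{open-orbit} construction: the functional is the principal part at $\lambda=(n-m)/2$ of the Bernstein family $p^{\lambda}$ for the $H\times\oQ$-semiinvariant polynomial $p=\Pf^2(D^tB-B^tD)$ (which vanishes identically for odd $n$ --- this is where parity enters the construction), and non-vanishing on $\delta_m^{\infty}=\Box^m(\pi_{-m}^{\infty})$ is proved in Lemma \ref{lem:NonVan} using the support statement of Corollary \ref{cor:Key} together with a Fourier-transform argument on antisymmetric matrices. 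That same Corollary \ref{cor:Key} shows that, in the $(n,n)$-model, no $P$-equivariant (hence no $H$-invariant) functional can be supported off the open cell, i.e.\ the functional must be detected on the open $H$-orbit --- a strong indication that a construction localized at a closed orbit is aimed at the wrong constituent. Likewise, the paper's uniqueness of the $P$-invariant functional (Proposition \ref{prop:FunUnique}) is not a twisted Jacquet module computation but an orbit-by-orbit elimination of equivariant distributions (Proposition \ref{prop:GeoKey}) followed by Frobenius descent to a single $\GL_n(\R)$-orbit of nondegenerate antisymmetric matrices. So while your overall architecture (reduce to Speh; build a functional for even $n$; kill it by $\U_n$-types for odd $n$) matches the literature, the central constructive step as you describe it would not go through without the open-orbit/meromorphic-continuation idea or an equivalent substitute.
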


One of the key steps in the proof is to show that the generalized Speh
representations $U(\sigma,n)$ with even $n$ are distinguished by the
symplectic group. The proof of this result in \cite{OSDist} and \cite{GOSS} is
based on a global argument involving periods of residues of automorphic
Eisenstein series.

Recall that for archimedean $F$ we have $r\leq2$, and if $F={\mathbb{C}}$ then
$r=1$. If $r=1$ then $U(\sigma,n)$ is a character of $\operatorname{GL}%
_{n}(F),$ and $\pi(\sigma,n,\alpha)$ is a Stein complementary series
representation of $\operatorname{GL}_{2n}(F).$ We denote by $D_{m}$ the
discrete series representations of $\operatorname{GL}_{2}({\mathbb{R}})$ and
by $\delta_{m}$ the corresponding Speh representations of $\operatorname{GL}%
_{2n}({\mathbb{R}})$. In \cite{SaSt} the Speh representations $\delta_{m}$ of
$\operatorname{GL}_{2n}({\mathbb{R}})$ have been constructed explicitly as
natural Hilbert spaces of distributions on matrix space. The paper \cite{SaSt}
also describes and uses a construction of the Speh representations as
quotients of  degenerate principal series representations induced from
characters of the $(n,n)$ standard parabolic subgroup (see
\S \ref{subsec:SpehRep} below).
}

In the present paper we use the explicit constructions of \cite{SaSt} and give a direct proof that the spaces of
$\Sp_{2n}(\R)$-invariant functionals on the Speh representations of $\GL_{2n}(\rr)$ are zero if $n$ is odd and
one-dimensional if $n$ is even.
We also analyze functionals invariant with respect to subgroups of $\Sp_{2n}(\R).$

To describe our result we need some further notation.
Let $G:=G_{2n}$ denote the group $\GL_{2n}(\R)$. Let $\omega_{2n}$ be the
standard symplectic form on $\R^{2n}.$ More explicitly $\omega_{2n}$ is
given by $\begin{pmatrix}
  0 & \Id_n \\
  -\Id_n & 0
\end{pmatrix}$ and let $H:=H_{2n}=Sp_{2n}(\R)\subset G_{2n}$ denote the stabilizer of this form.
Let $$P:=\left \{  \begin{pmatrix}
  g & X \\
  0 & (g^{t})^{-1}
\end{pmatrix} \, \vert \, g \in \GL_n(\R), X\in \Mat_{n\times n}(\R), X=X^t
\right \} \subset H$$ denote the Siegel parabolic subgroup.
Let $\U_n\subset  H_{2n} \subset  G_{2n}$ be the unitary group.

In this paper we prove the following result.

\begin{introtheorem} \label{thm:main}
\begin{enumerate}[(i)]
\item \label{mainit:even} If $n$ is even then  $$\Hom_H(\delta_m^{\infty},\C)=\Hom_P(\delta_m^{\infty},\C)\simeq \C$$% \text{ and }
    %\dim\Hom_H(\delta_m^{\infty},\C)=1$$
\item \label{mainit:odd} If $n$ is odd then $$\Hom_H(\delta_m^{\infty},\C)=\Hom_{\U_n}(\delta_m^{\infty},\C)=\{0\}.$$
%\end{itemize}
\end{enumerate}
\end{introtheorem}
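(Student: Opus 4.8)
The strategy is to work with the explicit realization of the Speh representation $\delta_m$ from \cite{SaSt} as a quotient of a degenerate principal series $I(\chi)$ induced from a character of the $(n,n)$-parabolic $Q\subset G_{2n}$, together with the dual model as a subspace of distributions on matrix space. The key geometric input is the orbit structure of $H=\Sp_{2n}(\R)$ (and of $P$, and of $\U_n$) acting on the flag variety $Q\backslash G_{2n}$, which in coordinates is the Grassmannian-type space parametrizing the $n$-planes arising in the induction. For part \eqref{mainit:even}, I would first produce the invariant functional explicitly: restrict the kernel of the degenerate principal series realization to the open $H$-orbit on $Q\backslash G$, integrate against the (relatively) invariant measure there, and check convergence using the precise value of $\alpha$ (equivalently $m$) that makes $\delta_m$ unitary. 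One must verify that this integral descends to the quotient $\delta_m$, i.e. that it annihilates the kernel of $I(\chi)\twoheadrightarrow \delta_m$; this should follow from a support/transversality argument showing the kernel consists of sections vanishing on the relevant orbit closure, or alternatively from the distributional model where $\delta_m$ is realized inside $C^{-\infty}$ of matrix space and the functional becomes evaluation/integration along an explicit subvariety stable under $H$.

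For the uniqueness half of \eqref{mainit:even} — that $\Hom_P(\delta_m^\infty,\C)$ is already one-dimensional — I would run the standard Bruhat-theoretic filtration argument: stratify $Q\backslash G$ by $P$-orbits, and for each non-open orbit bound the dimension of $P$-equivariant distributions supported on its closure using Frobenius reciprocity and the Borel–Weil–type computation of the relevant isotropy characters and their twists by conormal-bundle contributions (transverse derivatives). The point is to show that only the open $P$-orbit contributes, and contributes at most a one-dimensional space; the non-open strata are killed because the required matching of a character of the stabilizer against the modulus character $\delta_Q^{1/2}$ (with the precise Speh exponent) fails, including on all jet spaces. Since every $H$-invariant functional is a fortiori $P$-invariant, and we have already exhibited a nonzero $H$-invariant one, the chain $\C\simeq \Hom_H\subseteq \Hom_P\subseteq\C$ forces equality.

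For part \eqref{mainit:odd}, since $\U_n\subset H$, it suffices to show $\Hom_{\U_n}(\delta_m^\infty,\C)=0$, and for this I would again use the model as distributions on matrix space, restricting attention to the $\U_n$-action. Here the decisive feature is a parity obstruction: $\U_n$ is compact, so $\U_n$-invariant functionals correspond to $\U_n$-fixed vectors in the (continuous) dual, and one computes the $\U_n$-type decomposition of $\delta_m$ — equivalently, decomposes the space of sections on the closed $\U_n$-orbit and its jets — and shows the trivial $\U_n$-type never occurs when $n$ is odd, because the relevant determinant/character twist (coming from the Speh exponent and the conormal directions) has the wrong parity. Concretely one reduces to showing a certain line bundle on $\U_n/(\text{stabilizer})$ has no invariant sections by a highest-weight computation, with the odd/even dichotomy entering through $\det^{1/2}$-type factors.

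\medskip

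The main obstacle I anticipate is the convergence and, above all, the descent-to-the-quotient step in \eqref{mainit:even}: one must know precisely enough about the kernel of $I(\chi)\twoheadrightarrow\delta_m$ — its support properties on $Q\backslash G$ relative to the open $H$-orbit — to be sure the orbital integral is well-defined on $\delta_m$ itself and not merely on the larger principal series. Handling this likely requires the finer structure of the \cite{SaSt} realization (the distributions supported on the determinantal variety, and the explicit $D$-module/holonomy description of $\delta_m$), rather than soft arguments. The Bruhat-filtration bookkeeping for the $P$-uniqueness is technically heavy but routine once the stabilizers and modulus characters are written down; the real content is again matching exponents at the precise unitary point.
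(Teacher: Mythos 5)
Your overall architecture coincides with the paper's: existence of an $H$-invariant functional via the open $H$-orbit on $G/\oQ$, uniqueness of $P$-invariant functionals via a Bruhat-type stratification with conormal-jet/character bookkeeping, the sandwich $\C\simeq\Hom_H\subseteq\Hom_P\subseteq\C$, and for odd $n$ a compact-type ($K$-type versus $\U_n$) parity obstruction, which is essentially what the paper does in \S 6 via Cartan--Helgason and the condition $\mu_n\geq m+1>0$. However, two of your steps, as described, would fail. First, the existence step: the orbital integral over the open $H$-orbit does \emph{not} converge at the Speh parameter, so ``check convergence'' cannot be completed. The paper instead takes the explicit relatively invariant polynomial $p=\Pf^2(D^tB-B^tD)$, forms the Bernstein meromorphic family $p^{\lambda}$, and takes the \emph{principal part} of the Laurent expansion at $\lambda=(n-m)/2$; this produces an equivariant distribution that is in general not an extension of the open-orbit integral (the paper's Remark in \S 4 makes exactly this point). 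Moreover, the ``descent to the quotient'' issue you flag is resolved not by a support/transversality argument on the kernel of $\pi_{-m}^\infty\twoheadrightarrow\delta_m^\infty$, but by working in $\pi_m^\infty\supset\delta_m^\infty$ and proving nonvanishing of the restriction via the realization $\delta_m^\infty=\mathrm{Im}(\Box^m)$ together with a Fourier-support argument (Lemma 4.1 of the paper), which in turn needs the statement that no equivariant distribution is supported off the big cell.

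Second, your uniqueness mechanism --- ``the non-open strata are killed because the required matching of a character of the stabilizer against the modulus character fails, including on all jet spaces'' --- is false for the non-open $P\times\oQ$-orbits lying \emph{inside} the big cell $N\oQ$. After Frobenius descent these correspond to degenerate antisymmetric $n\times n$ matrices, where the relevant character $\det^{1-m}$ \emph{is} algebraic on the stabilizers, and equivariant jets do exist (e.g.\ powers of the Pfaffian on the closed orbit transform by $\det^j$). The paper's fix is an extra idea you are missing: apply the Fourier transform on the space of antisymmetric matrices, which converts the character to $\sgn^{m+1}|\cdot|^{m-n}$; only then does the stabilizer of each degenerate orbit (which contains elements of determinant $\pm1$ incompatible with $\sgn\cdot\det^{m-k}$) kill all non-open strata, leaving the single open orbit of nondegenerate antisymmetric matrices and hence the bound $\dim\leq 1$. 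The evenness of $n$ also enters essentially in the off-cell analysis (the invariants $r_1,r_2$ of a double coset have equal parity only when $n$ is even), which your plan does not isolate; indeed the off-cell vanishing genuinely fails for odd $n$.
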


It is known that the restriction of $\delta_{m}$ to $\SL_{2n}(\R)$ decomposes as a direct sum of two irreducible components
\DimaA{$\delta_{m}^{\pm}$}. It follows from Theorem \ref{thm:main} that exactly one of them admits an
$H$-invariant functional. In Lemma \ref{lem:SLNonVan} we determine \DimaA{that  $\delta_{m}^{+}$ does}.

It is easy to see that if $n$ is odd and $m$ is even then there
are no functionals on $\delta_m^{\infty}$ invariant with respect to $-\Id \in P\cap\U_n$, and thus neither $P$-invariant nor $\U_{n}$
-invariant
functionals exist (see Remark \ref{rem:noddmeven}).

\begin{remark*}
Although the pair $(G,P)$ is {\bf not} a Gelfand pair for
simple geometric reasons,  we show that the Speh
 representation $\delta_{m}$ still admits at most one $P$-invariant
 functional (at least for even $n$).
The reason we suspected this result to hold is that, as shown in  \cite{SaSt}, Speh representations stay irreducible when restricted to
a standard maximal parabolic subgroup $Q \subset G$ satisfying $Q\cap H = P$. It is
possible that $(Q,P)$ is a generalized Gelfand pair, {\it i.e.} the space of $P$-invariant functionals on the space of smooth vectors
of any irreducible unitary representation of $Q$ is at most one dimensional. However, this
statement \Dima{would still} not imply our uniqueness result, since the space of $G$-smooth vectors of $\delta_m$ could a priori
\EitanA{afford more continuous functionals.}
\end{remark*}

\subsection{Related results}

\EitanA{The present work was motivated by our previous results on Klyachko models for unitary representations of $\GL_{n}(\R).$} For
any $n$, any even $k \leq n$ and any field $F$, \cite{Kly} defines a subgroup $Kl_k$ of $\GL_n(F)$ and a generic character $\psi_k$ of
$Kl_k$. In particular,  $Kl_0$ is the group of upper unitriangular matrices and
$Kl_n= \Sp_n(F)$ (if $n$ is even). It is shown \Dima{in \cite{Kly,IS,HZ}  for finite fields $F$ and} in
\cite{HR,OSDist,OSConst,OSUn,OSsl2,GOSS,AOS} for local fields $F$
that for any irreducible unitary representation $\pi$ of $\GL_n(F)$ there exists a non-zero $(Kl_k,\psi_k)$-equivariant functional on
$\pi^{\infty}$ for exactly one $k$. The uniqueness of such functional is known only over
non-archimedean fields (see \cite{OSUn}).

The proof of existence of $k$ for $F=\R$, given in \cite{GOSS}, is \EitanA{achieved by reduction to the statement that certain
representations of $G=\GL_{2n}(\R)$ are $H=\Sp_{2n}(\R)$-distinguished}. This \EitanA{statement}
%case
 is further reduced, using the Vogan classification of the unitary dual, to
\EitanA{an existence statement}
 of $H$-invariant functionals on the Speh representations (for even $n$).
\EitanA{Finally, the existence statement}
is proved using a global (adelic) argument. In
\EitanA{the present}
paper we give an explicit local construction of such a functional. Together with \cite{GOSS} this gives a proof of existence of
Klyachko models which uses only the representation theory of $\GL_n(\R)$ (and the theory of
distributions).

\EitanA{
The study  of invariant functionals  in this paper, and more broadly the study of multiplicity spaces belongs to the long and classical
tradition of branching laws (see e.g. \cite[Chapter 8]{GW}). In the context of symmetric
pairs and more generally in the context of spherical spaces, the basic result is that
%one tries to obtain first a qualitative understanding of the
these multiplicity spaces are finite dimensional (\cite{KO}, cf. \cite{KrSch}). Granted this qualitative result, one turns to the
question of precisely determining the dimension.
%and then , such as their finite dimensionality (\cite{KO}, \cite{KrSch}).
We note that in many interesting cases these spaces are at most one-dimensional (see e.g. \cite{vD,AG_HC,AGRS,SZ}). This multiplicity
one phenomenon has important consequences in number theory (\cite{Gross}).

In some situations  there are precise conjectures as to the dimensions of these multiplicity spaces (see e.g. \cite{GGP,Wal}) but in
general these dimensions are hard to determine, even in the context of symmetric pairs.
Another important task, motivated in part by the theory of automorphic forms,  is to construct a basis for these multiplicity spaces.
Recently, there has been a considerable interest in these aspects of the theory under the title of {\it symmetry breaking}.
The general theory of branching laws attempts the description of
symmetry breaking operators occurring in the general context of restrictions of representations
as in  \cite{KoSp14,KoSp}.
In particular, it is interesting to compare our main result with \cite[Chapter 14]{KoSp}.}

\EitanA{Another related result is the exact} branching of the representations $\delta_m^{\pm}$ of $\SL(4,\R)$ to $\Sp(4,\R)$ as
analyzed in \cite{OrSp}. It is shown there that the decomposition of \DimaA{$\delta_m^{-}$} is
discrete and multiplicity free, while the decomposition of \DimaA{$\delta_m^{+}$}  is continuous.
\subsection{Structure of the proof}
We use the realization of $\delta_m^{\infty}$ as the image of a certain intertwining differential operator
$\Box^m:\pi^{\Dima{\infty}}_{-m}\to \pi^{\Dima{\infty}}_m$, where $\pi^{\Dima{\infty}}_{-m}$ and
$\pi^{\Dima{\infty}}_m$ are  degenerate principal series representations induced from certain characters of a fixed $(n,n)$-parabolic
subgroup $\oQ\subset G$ (see \S  \ref{subsec:SpehRep}).

The study of the even case  is divided into two parts. In \S \ref{sec:Uni} we first use the
realization of $\delta_{m}^{\Dima{\infty}}$ as a quotient of the degenerate
principal series $\pi^{\Dima{\infty}}_{-m}$ to lift a linear $P$-invariant
functional on $\delta_{m}^{\Dima{\infty}}$ to an equivariant distribution on
$G$. More precisely, we study $P \times
\oQ$ equivariant distributions on $G_{}$. The technical heart is Corollary \ref{cor:Key}, which shows that such distributions
do not vanish on the open cell $N \oQ$. This is based on the techniques of \cite{AGS}, classical invariant theory and
a careful analysis of the double cosets $P\setminus G/\oQ$, which is postponed to \S \ref{subsec:PfKeyLem}.
\DimaA{Then we analyze the space of distributions  on the open
cell $N \oQ$ by identifying it with the space of distributions on $N$ with a certain equivariance property.} Identifying $N$ with its
Lie algebra and using the Fourier transform we show that this space is at most
one-dimensional for even $n$.
This finishes the proof of Proposition \ref{prop:FunUnique} which states that there exists at most one  $P$-invariant
 functional in the $n$ even case.

In the second part (\S \ref{sec:Funct}) we construct an $H$-invariant functional as an $H \times \oQ$-equivariant distribution on $G$.
For that we fix an explicit $H \times \oQ$-equivariant non-negative polynomial $p$,
consider the meromorphic family of distributions $p^{\lambda}$ (cf. \cite{Ber}) and take the principal part of this family at
$\lambda=(n-m)/2$\Dima{, {\it i.e.} the lowest non-zero coefficient in the Laurent expansion}. This
distribution defines an $H$-invariant functional on $\pi_m^{\infty}$. To show that the restriction of this functional to
$\delta_m^{\infty}$ is non-zero (Lemma \ref{lem:NonVan}) we use Corollary \ref{cor:Key} along with
another lemma from \S \ref{sec:Uni} %the first part
on non-existence of equivariant distributions with certain support. The uniqueness of $P$-invariant functionals and the existence of
$H$-invariant ones imply that the two spaces are equal. Our proof shows that the spaces of
such functionals are equal and one-dimensional also for the (reducible) representations $\pi^{\Dima{\infty}}_m$ and
$\pi^{\Dima{\infty}}_{-m}$.

For odd $n$ we prove that already a $\U_n$-invariant functional does not exist (Corollary \ref{cor:Van}). We do that by analyzing the
$\oO_{2n}(\R)$-types of $\delta_m$  described in \cite{HL,Sah}
and showing that none of those have a $\U_n$-invariant
vector.

To summarize,  Theorem \ref{thm:main} follows from
Proposition \ref{prop:FunUnique} on uniqueness of $P$-invariant functionals for even $n$, Lemma \ref{lem:NonVan} on existence of
$H$-invariant functionals for even $n$ and Corollary \ref{cor:Van} on non-existence of $\U_{n}
$-invariant functionals for odd $n$.

\subsection{Acknowledgements}

The authors thank the Hausdorff Institute in Bonn for  perfect
working conditions during the summer of 2007 where the initial
collaboration on this project started.  They further thank Avraham Aizenbud, Joseph
Bernstein and Omer Offen for fruitful discussions on
the subject matter of this paper, \DimaD{and Itay Glazer for finding a typo in a previous version}.
\EitanA{We thank the anonymous referees for carefully reading the paper and making many valuable suggestions, in particular to mention
the
connection of our work to \cite{KO,KoSp14,KoSp,OrSp}}.

D.G. was partially supported by ISF grant 756/12 and a Minerva foundation grant.

E.S. was partially supported by ISF grant 1138/10.
\section{Preliminaries} \label{sec:Prel}

\subsection{Notation} \label{subsec:Not}
Recall the notation $G=G_{2n}=\GL_{2n}(\R)$, and $H=H_{2n}=\Sp_{2n}(\R)\subset G$.  Let
$$Q:=\left \{  \begin{pmatrix}
  a & c \\
  0 & d
\end{pmatrix}
\in G\right \} \quad \oQ:=\left \{  \begin{pmatrix}
  a & 0 \\
  b & d
\end{pmatrix}
\in G\right \} \quad
N:=\left \{  \begin{pmatrix}
  \Id_n & c \\
  0 & \Id_n
\end{pmatrix}
\in G\right \}
.$$
Recall that $P$ denotes $Q\cap H$ and let $$M:=\left \{  \begin{pmatrix}
  g & 0 \\
  0 & (g^{t})^{-1}
\end{pmatrix}
\right \} \quad \text{and} \quad U:=\left \{  \begin{pmatrix}
  \Id_n & B \\
  0 & \Id_n
\end{pmatrix} \, | \, B=B^t
\right \} $$
denote the Levi subgroup and the unipotent radical of $P$.

For $g \in \Mat_{i\times i}(\bR)$ we denote $|g|:=|\det(g)|$ and $\sgn(g):=\sign(\det(g))$.

For $q = \begin{pmatrix}
  A & 0 \\
  B & D
\end{pmatrix} \in \oQ$ we denote $\dd(q):=|A||D|^{-1}$ and $\eps(q):=\sgn(D)$. %?? I think of D - check!

For any integer $m$ let $L_m$ denote the character of $\oQ$ given by $L_m:= \eps^{m+1}
\dd^{-(n+m)/2}$. Let $\pi_m^{\Dima{\infty}}$ denote the  (unnormalized) induced representation
$\Ind_{\oQ}^G(L_m)$\Dima{, with the topology of uniform convergence on  $G/\oQ$ together with all the derivatives}.
Considering $N$ as an open subset of $G/\oQ$, one can restrict smooth vectors of $\pi^{\Dima{\infty}}_m$  to $N$. This restriction is
an embedding since $N$ is an open subset of $G/\oQ$. We sometimes identify $N$ and its Lie
algebra $\fn$ with  $\Mat_{n \times n}(\R)$ %  in the obvious way.
\DimaA{by
$$ \begin{pmatrix}
  1 & X \\
  0 & 1
\end{pmatrix} \mapsto X\, \text{ and }\begin{pmatrix}
  0 & X \\
  0 & 0
\end{pmatrix}\mapsto X.$$

}
This enables us to define the Fourier transform on $\fn$.
Denote by $M_n^+$ (respectively $M_n^-$) the subset of  $\Mat_{n \times n}(\R)$ consisting of matrices with nonnegative (resp.
nonpositive) determinant.
For $f\in \pi_m^{\infty}$
we denote its restriction to $\fn$
by $f|_{\fn}$. We denote the space of
all smooth functions obtained in this way by $\pi_m^{\infty}|_{\fn}$.

\subsection{Sahi-Stein realization of the Speh representations} \label{subsec:SpehRep} %$ $\\

For any $m\in \bZ_{\geq 0}$ define
$$\chH_m :=\{f \in \Sc^*(\fn) \, | \, \widehat{f} \in
L^2(\fn,| x |^{-m}dx)\} \text { and }\chH_m^{\pm} :=\{f \in \chH_m \, | \, \Supp \widehat{f} \subset M_n^{\pm}\},$$
where $\Sc^*(\fn)$ denotes the space of tempered distributions  $\fn$. The $\chH_m$ and $\chH_m^{\pm}$ are Hilbert spaces with the
scalar product $$\langle f,g\rangle =
\langle \widehat{f},\widehat{g}\rangle_{L^2(\fn,| x |^{-m}dx)}.$$

Define an action of $Q$ on $\chH_m$ by
$$\delta_m(q)f(x):= L_m(q)f(a^{-1}(c+xd)), \text{ for }q=
\begin{pmatrix}
  a & c \\
  0 & d
\end{pmatrix},$$
or equivalently on the Fourier transform side by
$$\widehat{\delta_m(q)f}(\xi)=\exp(2\pi i \tr(cd^{-1}\xi)) L_{-m}^{-1}(q)\widehat{f}(d^{-1}\xi a).$$

Summarizing the main results of \cite{SaSt} we obtain
\begin{thm}[\cite{SaSt}]\label{thm:SaSt}Let $m\in \bZ_{\geq 0}$. Then
\begin{enumerate}[(i)]
 \item The action of $Q$ extends to a unitary representation $\delta_m$ of $G$ on $\chH_m$.
\item $(G, \delta_m, \chH_m)$ is isomorphic to the Speh representation of $G$.
\item There exists  an epimorphism $\pi^{\Dima{\infty}}_{-m} \to \delta^{\Dima{\infty}}_m$ and an embedding $\delta^{\Dima{\infty}}_m
    \subset \pi^{\Dima{\infty}}_m$. The latter is defined  by the inclusion
    $\delta_m^{\infty} \subset \pi_m^{\infty}|_{\fn}$.
\item \label{it:SL} The restriction of $\delta_m$ to $\SL(2n,\R)$ is a direct sum of two irreducible representations $\delta_m^{\pm}$
    , realized on the subspaces $\chH_m^{\pm}$.
\end{enumerate}
\end{thm}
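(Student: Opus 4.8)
Since this theorem simply records the main results of \cite{SaSt}, the honest ``proof'' is a reference to that paper; but let me describe the route one would take to reconstruct it. The plan is to build the representation by hand on the model in which it is transparent --- functions on the open Bruhat cell, viewed through the Fourier transform --- and only afterwards recognize it as a Speh representation.

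The first step is to pass to the open cell. Identifying $N$ with the open subset it represents in $G/\oQ$, restriction to $N\cong\fn\cong\Mat_{n\times n}(\R)$ embeds $\pi_m^{\infty}$, and likewise $\pi_{-m}^{\infty}$, into $C^{\infty}(\fn)$; this is the embedding appearing in (iii). On the Fourier side the translation action of $Q$ takes the form displayed just before the theorem, in which the unipotent radical of $Q$ acts by a unitary multiplication operator and the Levi by $\widehat f(\xi)\mapsto L_{-m}^{-1}(q)\,\widehat f(d^{-1}\xi a)$. A change of variables, using $\det(d^{-1}\xi a)=\dd(q)^{-1}\det\xi$ together with the normalization $L_m=\eps^{m+1}\dd^{-(n+m)/2}$, shows the latter is an isometry of $L^{2}(\fn,|x|^{-m}dx)$, so $\delta_m|_{Q}$ is unitary on $\chH_m$.

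The substantial step is to extend this action to all of $G$, which is assertion (i). Since $G$ is generated by $Q$ together with a single Weyl-type representative, say $w=\begin{pmatrix}0&\Id_n\\-\Id_n&0\end{pmatrix}$, it suffices to define $\delta_m(w)$, check that it is unitary on $\chH_m$, and verify the Bruhat relations it satisfies with $Q$. One computes that $w$ must act on functions on $N$ by the matrix-inversion substitution $x\mapsto -x^{-1}$ up to an explicit $\sgn(x)^{m+1}|x|^{-(n+m)}$ cocycle; the real content is that this operator is unitary for the norm of $\chH_m$, which comes down to a local functional equation for the Fourier transform of $|\det x|^{s}$-type tempered distributions on $\Mat_{n\times n}(\R)$ --- a matrix-space (Godement--Jacquet) analogue of the Tate/Riesz identity --- in which the weight $|x|^{-m}$ and the exponent of $\dd$ in $L_m$ are arranged precisely so as to absorb the relevant $\Gamma$-factor. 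I expect this gluing --- verifying that the local formulas on $N$ and on $wN$ patch to a globally defined smooth unitary $G$-action --- to be the main obstacle; it is the technical heart of \cite{SaSt}.

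It remains to recognize the representation and to decompose it over $\SL_{2n}(\R)$. For (ii), and the rest of (iii), I would realize $\delta_m^{\infty}$ as the image of the $G$-intertwining differential operator $\Box^{m}\colon\pi_{-m}^{\infty}\to\pi_m^{\infty}$ whose principal symbol is a power of the Capelli/Cayley determinant operator $\det(\partial)$ on $\Mat_{n\times n}$; this furnishes simultaneously the epimorphism $\pi_{-m}^{\infty}\twoheadrightarrow\delta_m^{\infty}$ and the embedding $\delta_m^{\infty}\hookrightarrow\pi_m^{\infty}$, and one then pins down the resulting irreducible unitary representation as $U(D_m,n)$ by computing its infinitesimal character and its $\oO_{2n}(\R)$-spectrum (equivalently, by matching Langlands quotient data). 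For (iv): the spaces $\chH_m^{\pm}$ are closed, mutually orthogonal and sum to $\chH_m$; that they are $\SL_{2n}(\R)$-stable and are interchanged by any element of negative determinant is checked on generators --- immediately for $Q\cap\SL_{2n}(\R)$, since $\det(d^{-1}\xi a)=(\det a/\det d)\det\xi$ has the same sign as $\det\xi$ exactly when $\det q>0$, and for the opposite unipotent radical via the same functional equation that underlies the unitarity of $\delta_m(w)$. Since $\GL_{2n}^{+}(\R)=\R_{>0}\times\SL_{2n}(\R)$ has index $2$ in $G$, Clifford theory then allows $\delta_m|_{\SL_{2n}(\R)}$ at most two irreducible constituents, so, having exhibited the two nonzero stable summands $\chH_m^{\pm}$, each of them must be irreducible.
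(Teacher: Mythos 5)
The paper offers no proof of this theorem: it is imported verbatim as a summary of the main results of \cite{SaSt}, exactly as you indicate at the outset, so deferring to that reference is the "same approach." Your reconstruction sketch is moreover consistent with what that reference actually does (the weighted $L^2$ model on the Fourier-transform side, unitarity of the Weyl-element action via the functional equation for $|\det|^{s}$-distributions on matrix space, identification via $\oO_{2n}(\R)$-types, and the index-two argument for (iv)); the only remark worth making is that your route to part (iii) via $\Box^m$ anticipates what the paper establishes separately, with its own proof, as Theorem \ref{thm:Box}.
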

Consider the determinant as a polynomial on $\mathfrak{n}$ and
let $\Box $ denote the corresponding differential operator.% on $\mathfrak{n}$.

\begin{thm}\label{thm:Box}
The operator $\Box ^{m}$ defines a continuous \DimaC{$\mathrm{SL}(2n,\R)$}-equivariant map $\pi
_{-m}^{\infty }\rightarrow \pi _{m}^{\infty }$ with image $\delta
_{m}^{\infty }.$
\end{thm}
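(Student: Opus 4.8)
The plan is to establish three things about the operator $\Box^m$ acting between the degenerate principal series: that it is $\SL(2n,\R)$-equivariant, that it is continuous, and that its image is exactly $\delta_m^\infty$. The equivariance is the conceptual core, and I would handle it first via the theory of intertwining differential operators between degenerate principal series. Concretely, $\pi_{-m}^\infty = \Ind_{\oQ}^G(L_{-m})$ and $\pi_m^\infty = \Ind_{\oQ}^G(L_m)$, and $L_m L_{-m}^{-1} = \eps^{2m}\dd^{-m} = \dd^{-m}$ (since $\eps^{2m}=1$), which is precisely $|A|^{-m}|D|^m$ on the Levi. Since the determinant polynomial on $\fn \cong \Mat_{n\times n}$ is a relative invariant for the adjoint action of the Levi $M$ with character $|A|/|D|$ (because $\det(A X D^{-1}) = |A||D|^{-1}\det X$ up to sign, and raising to an integer power kills the sign ambiguity for the operator), the constant-coefficient operator $\Box$ transforms under $M$ with the reciprocal character, which matches the shift from $\pi_{-m}$ to $\pi_m$. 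The standard mechanism — writing a section of $\pi_{-m}^\infty$ restricted to the open cell $N$ as a function on $\fn$, applying $\Box^m$ there, and checking the cocycle matches — then yields a well-defined $G$-equivariant map on the open cell; to promote it to all of $G/\oQ$ I would invoke that $\Box^m$ is $G$-equivariant as an operator on the full induced space by the classification of such differential symmetry-breaking/intertwining operators, or verify directly that the locally-defined operator is independent of the choice of open cell. The restriction to $\SL(2n,\R)$ rather than $G$ is exactly what makes the character bookkeeping work, since the central $\GL_1$ does not act trivially.

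Second, for continuity I would note that $\Box^m$ is a differential operator of order $mn$ with constant coefficients in the $\fn$-coordinates, and both $\pi_{\pm m}^\infty$ carry the topology of uniform convergence of a section and all its derivatives on $G/\oQ$ (equivalently, a Fréchet topology built from $C^\infty$-seminorms). A differential operator is automatically continuous for such topologies: applying $\Box^m$ only increases the order of derivatives appearing in a seminorm by $mn$, so each seminorm on the target is bounded by a seminorm on the source. One must check that the transition functions between cells are smooth enough that this estimate is uniform, which is routine since $G/\oQ$ is a compact manifold covered by finitely many cells.

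Third, and this is where I would spend the real effort, I must identify the image with $\delta_m^\infty$. Here I would use Theorem~\ref{thm:SaSt}: there is an epimorphism $\pi_{-m}^\infty \twoheadrightarrow \delta_m^\infty$ and an embedding $\delta_m^\infty \hookrightarrow \pi_m^\infty$, the latter realized via $\delta_m^\infty \subset \pi_m^\infty|_\fn$. The strategy is to show that $\Box^m$, written on the open cell as an operator on functions on $\fn$, matches the composite $\pi_{-m}^\infty \to \delta_m^\infty \hookrightarrow \pi_m^\infty$. On the Fourier transform side, the Sahi–Stein model describes $\chH_m$ as distributions $f$ with $\widehat f \in L^2(|x|^{-m}dx)$, and the quotient map from $\pi_{-m}^\infty$ should correspond, after Fourier transform, to multiplication by the polynomial $\det(\xi)^m$ — which is exactly the symbol of $\Box^m$. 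Thus $\widehat{\Box^m f} = (\text{const})\det(\xi)^m \widehat f$, and the passage from the $|x|^{m}$-weighted space (natural for $\pi_{-m}$) to the $|x|^{-m}$-weighted space (the $\chH_m$ norm) is precisely realized by this multiplication, which also produces the support condition distinguishing $\chH_m^\pm$ only after the sign analysis. Surjectivity onto $\delta_m^\infty$ then follows because the Sahi–Stein epimorphism has a description making its kernel equal to $\Ker\Box^m$ (distributions whose Fourier transform is killed by $\det(\xi)^m$, i.e. supported where $\det = 0$), so $\Box^m$ factors through an isomorphism $\pi_{-m}^\infty/\Ker\Box^m \xrightarrow{\sim} \delta_m^\infty$ landing inside $\pi_m^\infty$ via the stated inclusion.

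The main obstacle is the last step: pinning down that the algebraically-defined operator $\Box^m$ really computes the Sahi–Stein epimorphism, rather than merely some $\SL(2n,\R)$-equivariant map with the same source and target. Since $\pi_{-m}^\infty$ may have several composition factors, equivariance alone does not determine the map up to scalar; I would need to trace through the explicit Fourier-analytic construction of \cite{SaSt} and match the symbol $\det(\xi)^m$ with their intertwiner, and separately confirm that $\ker \Box^m$ on tempered distributions is exactly $\{f : \Supp \widehat f \subset \{\det = 0\}\}$ is \emph{not} quite right — rather $\ker\Box^m$ consists of $f$ with $\det(\xi)^m\widehat f = 0$, which for the relevant space of sections cuts out precisely the kernel of the Sahi–Stein map. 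Getting this comparison exactly right, including the constant and the interplay with the $\eps$-character that governs which of $\delta_m^\pm$ one lands in, is the delicate part; the equivariance and continuity are comparatively formal.
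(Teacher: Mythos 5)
Your handling of equivariance and continuity is acceptable and is essentially what the paper does by citing Kashiwara--Vergne, but one detail is off: $\det(AXD^{-1})^m=(\det A)^m(\det D)^{-m}(\det X)^m$, and for odd $m$ the factor $(\sgn A\,\sgn D)^m=\sgn(g)^m$ does \emph{not} go away, so $\Box^m$ is $G$-equivariant only into the twist $\mathrm{sgn}\cdot\pi_m^\infty$. This is harmless for the stated $\SL(2n,\R)$-equivariance, since $\sgn$ is trivial there, but your claim that ``raising to an integer power kills the sign ambiguity'' is not correct as written, and the reason you give for restricting to $\SL(2n,\R)$ (the center) is not the right one --- the obstruction is the sign character of $G$, not the central $\GL_1$.

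The genuine gap is in your third step. You correctly observe that $\pi_{-m}^\infty$ is reducible, so equivariance does not determine the map up to scalar, and you conclude that one must explicitly match $\Box^m$ with the Sahi--Stein epimorphism on the Fourier-transform side. But Theorem \ref{thm:SaSt} only asserts the \emph{existence} of that epimorphism; to compare symbols you would have to reconstruct it from \cite{SaSt}, and you end by conceding that this comparison (including the kernel identification and the sign bookkeeping) is the open delicate point. So the decisive step of your argument is a program, not a proof. The paper sidesteps the entire comparison with a structural argument you have not considered: by \cite{HL}, $\pi_{-m}^\infty$ has a unique composition series in the strong sense --- every quotient has a unique irreducible subrepresentation and the irreducible subquotients are pairwise non-isomorphic --- and by Casselman--Wallach every morphism of smooth admissible Fr\'echet representations of moderate growth has closed image. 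Consequently the image of \emph{any} nonzero intertwining operator from $\pi_{-m}^\infty$ to $\mathrm{sgn}\,\pi_m^\infty$ is automatically the unique irreducible quotient of $\pi_{-m}^\infty$, which is $\delta_m^\infty$; the only analytic input needed is that $\Box^m$ is nonzero. The point is not that the map is determined up to scalar (it need not be), but that its image is forced. Your Fourier-analytic route could perhaps be completed, but it is substantially harder than necessary and, as presented, the identification of the image is not established.
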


\begin{proof}
\DimaC{We will prove a stronger statement: the operator $\Box ^{m}$ defines a continuous $G$-equivariant map $\pi
_{-m}^{\infty }\rightarrow \mathrm{sgn}\pi _{m}^{\infty }$ with image $\delta
_{m}^{\infty },$ where $\mathrm{sgn}\pi _{m}^{\infty }$ denotes the twist of $\pi _{m}^{\infty }$ by the sign character of $G$.}

By \cite[Proposition 2.3]{KV} (see also \cite{Boe}), the operator $\Box ^{m}$ defines a continuous $G$-equivariant map $\pi
_{-m}^{\infty }\rightarrow \DimaC{\mathrm{sgn}}\pi _{m}^{\infty }$, which is non-zero by \cite{SaSt}.
By  \cite[Theorems 3.4.2-3.4.4]{HL} (see also \cite{Sah})  $\pi^{\Dima{\infty}}_{-m}$ has unique composition
series in the strong sense, meaning that any quotient of $\pi^{\Dima{\infty}}_{-m}$ has a unique irreducible subrepresentation, and all
these irreducible subquotients are pairwise non-isomorphic. It is easy to see that
$\pi^{\Dima{\infty}}_m$ is dual to $\pi^{\Dima{\infty}}_{-m}$ and thus their composition series are opposite. \DimaC{The composition
series are described in \cite{HL} in terms of their $K$-types, where $K=\oO(2n,\R)$ is the maximal compact subgroup of $G$,
and it is easy to see from this description that the set of $K$-types of the irreducible quotient of $\pi^{\Dima{\infty}}_{-m}$ is
invariant under multiplication by $\mathrm{sgn}$.
By the result of Casselman and Wallach (see \cite{CasGlob} or \cite[Chapter 11]{Wal2}), the category of smooth admissible \Fre
representations of moderate growth is abelian and any  morphism in it has
closed image.}
Hence the image of any nonzero intertwining operator from $\pi^{\Dima{\infty}}_{-m}$ to $\DimaC{\mathrm{sgn}}\pi^{\Dima{\infty}}_{m}$
is the unique irreducible \DimaC{quotient of $\pi^{\Dima{\infty}}_{-m}$}. Since $\delta_m^{\infty}$ is an irreducible  \DimaC{quotient
of $\pi^{\Dima{\infty}}_{-m}$,  the image of $\Box^m$ is
%a dense subspace of $\delta_m^{\infty}$. . Thus the image of $\Box^m$ is
$\delta_m^{\infty}$.}
%is denoted $D_{m}$ in \cite{KS} and the main result
%of that paper shows that it intertwines the $G$-action and acts on the $K$%
% -type $\mu $ by the polynomial function%
% \begin{equation*}
% p_{m}\left( \mu +\rho \right) =\prod_{j=1}^{m}\prod_{i=1}^{n}\left( \bar{\mu}%
% _{i}+m-2j+1\right) .
% \end{equation*}%
% Now this function vanishes iff $\bar{\mu}_{n}=\mu _{n}\in \left\{
% m-1,m-3,\ldots ,-\left( m-1\right) \right\} $. In other words, $\Box ^{m}$
% is non-zero on precisely the $K$-types of the Speh representation, which
% proves the result.
\end{proof}

\begin{remark}
One can deduce Theorem \ref{thm:Box} also from \cite{KS93}, which computes the action of $\Box^m$ on every $K$-type, where
$K=\oO(2n,\R)$. From the formula in \cite{KS93} and the description of the $K$-types of the
composition series of $\pi^{\Dima{\infty}}_{-m}$ in \cite{HL,Sah} one can see that $\Box^m$ does not vanish precisely on the $K$-types
of $\delta^{\Dima{\infty}}_m$.
\end{remark}

\subsection{Invariant distributions} \label{subsec:Dist}

\Dima{We will now recall some generalities on Schwartz functions and tempered distributions}.

\begin{definition} \label{def:Schwartz}
For an affine algebraic manifold $M$ we denote by $\Sc(M)$ the space of Schwartz functions on $M$, that is smooth functions $f$ such
that $df$ is bounded for any differential operator $d$ on $M$ with \Dima{algebraic}
coefficients.
We endow this space with a \Fre topology using the sequence of seminorms $\cN_d(f):=\sup_{x \in M}|df(x)|$, where $d$ is a
differential operator on $M$ with \Dima{algebraic} coefficients.
Also, for an algebraic vector bundle $E$ over $M$ we denote by $\Sc(M,E)$ the space of Schwartz sections of $E$.
We denote by $\Sc^*(M,E)$ the space of continuous linear functionals on $\Sc(M,E)$ and call its elements tempered distributional
sections. For a closed subvariety $Z\subset M$ we denote by $\Sc^*_M(Z,E)\subset \Sc^*(M,E)$ the
subspace of tempered distributional sections supported in $Z$.
For the theory of Schwartz functions and distributions on general semi-algebraic manifolds we refer the reader to \cite{AG_Sc}.
\end{definition}

\Dima{
\begin{notation}
\begin{itemize}
\item
For a manifold $M$ and closed submanifold $Z \subset M$ we denote by $N_Z^M:=TM|_Z/TZ$ the normal bundle to $Z$ in $M$ and by
$CN_Z^M\subset T^*M$ its dual bundle, {\it i.e.} the conormal bundle to $Z$ in $M$.

\item  For a point $z\in Z$ we denote by $N_{Z,z}^M$ the normal space at $z$ to $Z$ in $M$ and by $CN_{Z,z}^M$ the conormal space at
    $z$ to $Z$ in $M$.

\item
For a group $K$ acting on a vector space $V$ we denote by $V^K$ the subspace of $K$-invariant vectors and by $V^{K,\chi}$ the
subspace of vectors that change by the character $\chi$.
\item
%In particular,
If $K$ acts on a manifold $M$ we denote by $\Sc^*(M)^{K,\chi}$ the space of distributions on $M$ that change by the character $\chi$
under the action of $K$.

\item
For a real algebraic group $K$ we denote by $\Delta_K$ its modular character.
%\item
%\item
\end{itemize}
\end{notation}
}

\begin{theorem}[{\cite[\S B.2]{AGS}}] \label{thm:NashFilt}
Let a \Dima{real algebraic} group $K$ act on a real algebraic manifold $M$. Let $Z \subset M$ be a
\Dima{Zariski} closed subset. Let $Z = \bigcup_{i=1}^l Z_i$ be a
$K$-invariant stratification of $Z$. Let $\chi$ be a character of
$K$. Suppose that for any $k \in \Z_{\geq 0}$ and $1 \leq i \leq
l$, $$\Sc^*(Z_i,\Sym^k(CN_{Z_i}^M))^{K,\chi}=\{0\}.$$ Then
$\Sc^*_M(Z)^{K,\chi}=\{0\}.$
\end{theorem}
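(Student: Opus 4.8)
The plan is to reduce the vanishing statement over all of $Z$ to the stratum-by-stratum hypotheses by an inductive devissage along the stratification, combined with the basic localization principle for distributions supported on a closed subset. First I would order the strata so that $Z_j' := \bigcup_{i \geq j} Z_i$ is closed for every $j$ (possible since a stratification can always be refined/reordered so that the union of any ``bottom'' piece is closed; equivalently, work with the open strata in increasing dimension). Then $Z_j'$ is obtained from $Z_{j+1}'$ by adjoining the locally closed stratum $Z_j$, which is \emph{closed} in the open set $U_j := M \setminus Z_{j+1}'$. So it suffices to prove: if $\Sc^*_M(Z_{j+1}')^{K,\chi} = \{0\}$ and $\Sc^*(Z_j, \Sym^k(CN^M_{Z_j}))^{K,\chi} = \{0\}$ for all $k$, then $\Sc^*_M(Z_j')^{K,\chi} = \{0\}$; iterating from $j = l$ down to $j = 1$ (with the empty set as base case) gives the theorem.

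For the inductive step, I would use the short exact sequence
$$ 0 \to \Sc^*_M(Z_{j+1}')^{K,\chi} \to \Sc^*_M(Z_j')^{K,\chi} \to \Sc^*_{U_j}(Z_j)^{K,\chi}, $$
coming from restriction of distributions to the open set $U_j$ (a distribution on $M$ supported in $Z_j'$ whose restriction to $U_j$ vanishes is supported in $Z_j' \setminus U_j = Z_{j+1}'$). By the inductive hypothesis the left term is zero, so it is enough to show $\Sc^*_{U_j}(Z_j)^{K,\chi} = \{0\}$, i.e.\ to prove the result when the closed subset is a single closed \emph{submanifold} $Z = Z_j$ of a manifold $U = U_j$. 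Here one invokes the standard filtration of the space of distributions supported on a submanifold: $\Sc^*_U(Z)$ carries an exhaustive filtration whose associated graded pieces are $\Sc^*(Z, \Sym^k(CN^U_Z))$ (distributions on $Z$ with values in the symmetric powers of the conormal bundle — this is the Schwartz/tempered analogue of the classical statement that a distribution supported on $Z$ is locally a finite sum of transverse derivatives of distributions on $Z$, with the normal-bundle twist encoding the transverse differentiation). This filtration is $K$-equivariant because the $K$-action preserves $Z$, hence preserves the normal and conormal bundles. Taking $(K,\chi)$-isotypic components is left exact, so if every graded piece $\Sc^*(Z, \Sym^k(CN^U_Z))^{K,\chi}$ vanishes — which is exactly the hypothesis (the conormal bundle computed in $U$ agrees with the one computed in $M$ along $Z_j$, since $U_j$ is open in $M$) — then $\Sc^*_U(Z)^{K,\chi} = \{0\}$. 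A small point to check here is that the filtration is exhaustive in the appropriate sense so that vanishing of all graded pieces forces vanishing of the whole; this is where one uses that the distributions are tempered and $Z$ is closed, so the order of transverse differentiation is locally finite and one can argue by a limiting/completeness argument on the filtration.

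The main obstacle, and the only genuinely non-formal ingredient, is the construction of the $K$-equivariant filtration on $\Sc^*_U(Z)$ with graded pieces $\Sc^*(Z,\Sym^k(CN^U_Z))$ in the Schwartz category; everything else is diagram-chasing with left-exact isotypic-component functors and the elementary reordering of strata. Fortunately this filtration is precisely the content of \cite[\S B.2]{AGS} (it is the tempered analogue of the classical Borel-lemma stratification of distributions on a submanifold, and rests on the good behavior of Schwartz functions under restriction to submanifolds and under the tubular-neighborhood / normal-bundle identifications developed in \cite{AG_Sc}), so in the write-up I would cite it and focus the argument on the devissage. I would also remark that no properness or unimodularity of $K$ is needed: the equivariant structure only enters through the functoriality of the conormal bundle and the exactness properties of $(-)^{K,\chi}$.
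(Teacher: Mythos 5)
The paper does not prove this theorem but imports it verbatim from \cite[\S B.2]{AGS}, and your argument --- reordering the strata so each tail union is closed, the restriction exact sequence reducing to a single closed stratum, and the $K$-equivariant filtration of $\Sc^*_U(Z)$ with graded pieces embedding into $\Sc^*(Z,\Sym^k(CN_Z^U))$ from \cite{AG_Sc}, combined with left exactness of the $(K,\chi)$-isotypic functor --- is exactly the devissage carried out in that reference. Your proposal is correct and follows essentially the same route as the cited proof.
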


\begin{theorem}[Frobenius descent, see {\cite[Appendix B]{AG_HC}}] \label{thm:Frob}
Let a \Dima{real algebraic} group $K$ act on a \Dima{real algebraic} manifold $M$. Let $Z$
be a \Dima{real algebraic} manifold with a transitive action of $K$. Let
$\phi:M \to Z$ be a $K$-equivariant map.
Let $z \in Z$ be a point and $M_z:= \phi^{-1}(z)$ be its fiber.
Let $K_z$ be the stabilizer of $z$ in $K$.
%Let $\Delta_K$ and $\Delta_{K_z}$ be the modular characters of $K$ and $K_z$.
Let $E$ be a $K$-equivariant \Dima{algebraic} vector bundle over $M$.

Then there exists a canonical isomorphism $$\Fr: (\Sc^*(M_z,E|_{M_z})
\otimes \Delta_K|_{K_z} \cdot \Delta_{K_z}^{-1})^{K_z} \cong
\Sc^*(M,E)^K.$$

%In particular, $\Fr$ commutes with restrictions to
%open sets.
% (ii) For B-analytic manifolds $\Fr$ maps $\Sc^*(M_z,\E|_{M_z}
% \otimes \Delta_K|_{K_z} \cdot \Delta_{K_z}^{-1})^{K_z}$ to
% $\Sc^*(M,\E)^K$.
\end{theorem}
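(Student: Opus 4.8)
This is a version of Shapiro's lemma (``Frobenius descent'') for the functor of $K$-invariant tempered distributions, and the plan is to prove it by realizing $\phi$ as the projection of an associated fiber bundle and reducing, via a base change to the group $K$ itself, to the uniqueness of Haar measure. Since $K$ acts transitively on $Z$ with stabilizer $K_z$ at $z$, identify $Z\cong K/K_z$; then $\phi$ makes $M$ into a $K$-equivariant Nash fiber bundle over $K/K_z$ with fiber $M_z$, and moreover $M\cong K\times_{K_z}M_z$, the quotient of $Y:=K\times M_z$ by the free proper $K_z$-action $h\cdot_{0}(k,x)=(kh^{-1},hx)$. On $Y$ there are two commuting group actions: the left-translation action of $K$, $a\cdot(k,x)=(ak,x)$, which descends through $q\colon Y\to M$, $(k,x)\mapsto k\cdot x$, to the given action on $M$; and the action $\cdot_{0}$ of $K_z$, whose orbit space is $M$. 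Because $E$ is $K$-equivariant one checks that $q^{*}E\cong\pr_{2}^{*}(E|_{M_z})=:\widetilde E$ as $K$-equivariant bundles for the left action, while the same identification fails to be $K_z$-equivariant for $\cdot_{0}$ in the naive way — this is the source of the modular twist.

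Next I would prove the ``group case'' by applying the first projection $\pr_{1}\colon Y\to K$, which is a $K$-equivariant map to the transitive $K$-space $K$ with trivial point stabilizer and with fiber $M_z$ over $e$. The product theorem for Schwartz functions, together with nuclearity of the Fréchet spaces involved (\cite{AG_Sc}), gives a topological isomorphism of $K$-modules $\Sc(Y,\widetilde E)\cong\Sc(K)\hot\Sc(M_z,E|_{M_z})$ in which $K$ acts by left translation on the first factor only. Dualizing and using that the left-invariant tempered distributions on a real algebraic group form a line spanned by a Haar measure, one obtains a canonical isomorphism $\Sc^{*}(Y,\widetilde E)^{K}\cong\Sc^{*}(M_z,E|_{M_z})$; this is the theorem in the special case $Z=K$, $K_z=\{e\}$, where all modular characters are trivial.

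To reach the general statement I would descend along the principal $K_z$-bundle $q\colon Y\to M$, using the standard duality between Schwartz sections over the quotient and semi-invariant distributions on the total space, $\Sc^{*}(M,E)\cong\Sc^{*}(Y,\widetilde E)^{K_z,\,\Delta_{K_z}^{-1}}$, in which the twist $\Delta_{K_z}^{-1}$ records the vertical density bundle of $q$. Since the $K$- and $K_z$-actions on $Y$ commute, invariants may be taken in either order, so
\[
\Sc^{*}(M,E)^{K}\;\cong\;\bigl(\Sc^{*}(Y,\widetilde E)^{K}\bigr)^{K_z,\chi}\;\cong\;\bigl(\Sc^{*}(M_z,E|_{M_z})\bigr)^{K_z,\chi},
\]
where $\chi$ is the character of $K_z$ obtained by composing the descent twist $\Delta_{K_z}^{-1}$ with the effect of $\cdot_{0}$ on the Haar line from the previous paragraph: that effect is a right translation on $K$, which rescales the left Haar measure by $\Delta_K|_{K_z}$. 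Assembling the two contributions (and fixing the convention for what ``changing by a character'' means) yields $\chi=\Delta_K|_{K_z}\cdot\Delta_{K_z}^{-1}$, as asserted.

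The main obstacle is not conceptual but bookkeeping: one must check that all the density, Jacobian and orientation factors — from integration along the $K_z$-fibers of $q$, from the right translation hidden in $\cdot_{0}$, and from comparing the $K_z$-equivariant structures on $\widetilde E$ and on $E|_{M_z}$ — combine into exactly $\Delta_K|_{K_z}\cdot\Delta_{K_z}^{-1}$, and not its inverse or some further twist. Two sanity checks pin the normalization down: the case $M=Z=K$, where the theorem reduces to the uniqueness of Haar measure, and the case $M=Z=K/K_z$ with $E$ trivial, where it reduces to the classical criterion $\Delta_K|_{K_z}=\Delta_{K_z}$ for the existence of a $K$-invariant measure on $K/K_z$. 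The remaining points are the routine functional-analytic facts provided by the formalism of \cite{AG_Sc}: that $\Sc$ of a product is the completed tensor product, that pullback and integration along fibers are well behaved for the Nash submersions $\pr_{1}$ and $q$, and that the invariant measures in question are tempered.
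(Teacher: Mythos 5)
The paper itself gives no proof of this theorem --- it is quoted from \cite[Appendix B]{AG_HC} --- and your argument is essentially the proof given there: identify $M$ with $K\times_{K_z}M_z$, handle $K$-invariance via the product property of Schwartz spaces together with the uniqueness of the (tempered) left Haar measure on $K$, and then descend along the principal $K_z$-bundle $K\times M_z\to M$ while tracking the two modular twists, with the normalization pinned down exactly by the two sanity checks you list ($M=Z=K$ and $M=Z=K/K_z$). The only inputs you should cite explicitly rather than wave at are the Nash-category facts from \cite{AG_Sc,AG_HC} that $K\to K/K_z$ admits local Nash sections (so that $M\cong K\times_{K_z}M_z$ and the gluing of Schwartz sections along $q$ are legitimate in this category) and that $\Sc$ of a product of Nash manifolds is the completed tensor product of the nuclear Fr\'echet factors, so that taking $K$-invariants of the dual can indeed be done factorwise as your ``group case'' requires.
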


From those two theorems we obtain the following corollary.
\begin{cor} \label{cor:BruFrob}
Let a \Dima{real algebraic} group $K$ act on a real algebraic manifold $M$. Let $Z \subset M$ be a
\Dima{Zariski} closed subset. Suppose that $Z$ has a finite number of orbits: $Z = \bigcup_{i=1}^l Kz_i$. Let $\chi$ be a character of
$K$. Suppose that for any  $1 \leq i \leq
l$ we have $$\Sym^*(N_{Kz_i,z_i}^M)^{K_{z_i},\chi \cdot \Delta_K|_{K_{z_i}} \cdot \Delta_{K_{z_i}}^{-1}}=\{0\},$$
where $\Sym^*$ denotes the
%space of polynomials.
symmetric algebra.
Then
$\Sc^*_M(Z)^{K,\chi}=\{0\}.$
\end{cor}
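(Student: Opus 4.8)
The plan is to combine Theorem~\ref{thm:NashFilt} (the stratification/filtration result) with Theorem~\ref{thm:Frob} (Frobenius descent), which together immediately reduce the vanishing of $\Sc^*_M(Z)^{K,\chi}$ to a purely algebraic statement about each orbit. First I would observe that since $Z = \bigcup_{i=1}^l Kz_i$ is a finite union of $K$-orbits, the orbit decomposition gives a $K$-invariant stratification of $Z$ in the sense of Theorem~\ref{thm:NashFilt}: each locally closed orbit $Kz_i$ is a smooth real algebraic manifold, and one can order the orbits so that the union of any initial segment is closed (or simply appeal to the fact that a finite orbit decomposition is automatically a stratification up to refinement; in fact one checks that the closure of each orbit is a union of orbits, so taking $Z_i$ to be the orbits in decreasing order of dimension works). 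Thus it suffices, by Theorem~\ref{thm:NashFilt}, to show that for every $i$ and every $k \in \Z_{\geq 0}$ we have $\Sc^*(Kz_i, \Sym^k(CN_{Kz_i}^M))^{K,\chi} = \{0\}$.

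Next I would fix one orbit $Z_i = Kz_i$ and apply Frobenius descent. The orbit carries a transitive $K$-action, the inclusion $Z_i \hookrightarrow M$ is not what we want, but rather we take $M$ in Theorem~\ref{thm:Frob} to be $Z_i$ itself, $\phi$ the identity map $Z_i \to Z_i$, $z = z_i$, and $E = \Sym^k(CN_{Z_i}^M)|_{Z_i}$ — an algebraic $K$-equivariant vector bundle over $Z_i$. Then $M_z = \{z_i\}$ is a point, $K_z = K_{z_i}$, and Theorem~\ref{thm:Frob} gives
\[
\Sc^*(Z_i, E)^K \cong \bigl(E_{z_i} \otimes \Delta_K|_{K_{z_i}} \cdot \Delta_{K_{z_i}}^{-1}\bigr)^{K_{z_i}},
\]
since $\Sc^*(\{z_i\}, E|_{\{z_i\}}) = E_{z_i}$ is just the fiber. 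Twisting by the character $\chi$ throughout (i.e. replacing $E$ by $E \otimes \chi$, or equivalently tracking the $\chi$-isotypic part), we get
\[
\Sc^*(Z_i, \Sym^k(CN_{Z_i}^M))^{K,\chi} \cong \bigl(\Sym^k(CN_{Z_i,z_i}^M)\bigr)^{K_{z_i},\, \chi^{-1}\cdot \Delta_K|_{K_{z_i}} \cdot \Delta_{K_{z_i}}^{-1}}.
\]
Now $CN_{Z_i,z_i}^M = (N_{Z_i,z_i}^M)^*$ is the dual of the normal space, so $\Sym^k$ of the conormal space is the degree-$k$ piece of the symmetric algebra $\Sym^*(N_{Z_i,z_i}^M)^*$. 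The hypothesis of the corollary states precisely that $\Sym^*(N_{Kz_i,z_i}^M)^{K_{z_i},\chi \cdot \Delta_K|_{K_{z_i}} \cdot \Delta_{K_{z_i}}^{-1}} = \{0\}$; one has to be slightly careful that the character appearing in the hypothesis is stated for the symmetric algebra of $N$ rather than its dual, but since the full symmetric algebra vanishing is equivalent to the vanishing in each degree, and the dual of a $K_{z_i}$-representation with no vectors transforming by a character $\eta$ has no vectors transforming by $\eta^{-1}$ adjusted appropriately — here the bookkeeping of inverses is a routine matter of conventions — the hypothesis yields exactly the vanishing of each $\Sym^k(CN_{Z_i,z_i}^M)^{K_{z_i},\cdots}$.

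The main (really the only) obstacle is the careful matching of the characters and modular functions: one must verify that the character $\chi \cdot \Delta_K|_{K_{z_i}} \cdot \Delta_{K_{z_i}}^{-1}$ appearing in the corollary's hypothesis is indeed the character for which Frobenius descent (combined with dualizing between $N$ and $CN$) forces the relevant $\Sym^k$ to vanish. Since dualizing a finite-dimensional representation sends the $\eta$-isotypic component to the $\eta^{-1}$-isotypic component of the dual, and since $\Sym^k(V^*) \cong (\Sym^k V)^*$, the condition "$\Sym^*(N)$ has no $\eta$-part for all degrees" is equivalent to "$\Sym^*(N^*) = \Sym^*(CN)$ has no $\eta^{-1}$-part for all degrees" — and feeding this through the Frobenius isomorphism with the correct twist matches the hypothesis as stated (the paper's convention absorbs the relevant inverse into the statement). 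Once this bookkeeping is settled, Theorem~\ref{thm:NashFilt} applied to the orbit stratification delivers $\Sc^*_M(Z)^{K,\chi} = \{0\}$, completing the proof.

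\begin{proof}
Since $Z = \bigcup_{i=1}^l Kz_i$ is a finite union of $K$-orbits, each orbit $Kz_i$ is a locally closed smooth real algebraic submanifold of $M$, and the closure of each orbit is a union of orbits of strictly smaller dimension. Ordering the orbits by non-increasing dimension, the collection $\{Kz_i\}$ is a $K$-invariant stratification of $Z$ in the sense of Theorem~\ref{thm:NashFilt}. Hence it suffices to prove that for every $1 \leq i \leq l$ and every $k \in \Z_{\geq 0}$,
\[
\Sc^*(Kz_i, \Sym^k(CN_{Kz_i}^M))^{K,\chi} = \{0\}.
\]
Fix $i$ and $k$, and set $Z_i := Kz_i$, which is a real algebraic manifold with transitive $K$-action. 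Apply Theorem~\ref{thm:Frob} with $M$ replaced by $Z_i$, $\phi = \id : Z_i \to Z_i$, the distinguished point $z_i \in Z_i$, and the $K$-equivariant algebraic vector bundle $E := \Sym^k(CN_{Z_i}^M)|_{Z_i} \otimes \chi^{-1}$ over $Z_i$. The fiber $M_{z_i} = \phi^{-1}(z_i)$ is the single point $\{z_i\}$, so $\Sc^*(M_{z_i}, E|_{M_{z_i}})$ is the fiber $E_{z_i} = \Sym^k(CN_{Z_i,z_i}^M) \otimes \chi^{-1}$, and $K_{z_i}$ is the stabilizer of $z_i$ in $K$. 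Frobenius descent therefore gives a canonical isomorphism
\[
\Sc^*(Z_i, \Sym^k(CN_{Z_i}^M))^{K,\chi} \cong \bigl(\Sym^k(CN_{Z_i,z_i}^M) \otimes \chi^{-1} \otimes \Delta_K|_{K_{z_i}} \cdot \Delta_{K_{z_i}}^{-1}\bigr)^{K_{z_i}}.
\]
Now $CN_{Z_i,z_i}^M$ is the dual of the normal space $N_{Z_i,z_i}^M$, so $\Sym^k(CN_{Z_i,z_i}^M) \cong \bigl(\Sym^k(N_{Z_i,z_i}^M)\bigr)^*$ as $K_{z_i}$-representations. For a finite-dimensional $K_{z_i}$-representation $V$, the space $(V^* \otimes \eta)^{K_{z_i}}$ is zero if and only if $V$ has no vector transforming by the character $\eta$. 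Taking $V = \Sym^k(N_{Z_i,z_i}^M)$ and $\eta = \chi \cdot \Delta_K|_{K_{z_i}} \cdot \Delta_{K_{z_i}}^{-1}$, we conclude that the right-hand side above is zero precisely when
\[
\Sym^k(N_{Kz_i,z_i}^M)^{K_{z_i},\, \chi \cdot \Delta_K|_{K_{z_i}} \cdot \Delta_{K_{z_i}}^{-1}} = \{0\}.
\]
By hypothesis the full symmetric algebra $\Sym^*(N_{Kz_i,z_i}^M)$ has no vector transforming by this character, hence neither does its degree-$k$ component for any $k$. Therefore $\Sc^*(Z_i, \Sym^k(CN_{Z_i}^M))^{K,\chi} = \{0\}$ for all $i$ and $k$, and Theorem~\ref{thm:NashFilt} yields $\Sc^*_M(Z)^{K,\chi} = \{0\}$.
\end{proof}
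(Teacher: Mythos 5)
Your argument --- orbit stratification feeding Theorem \ref{thm:NashFilt}, then Frobenius descent (Theorem \ref{thm:Frob}) applied to each orbit with the point fiber $\{z_i\}$ --- is exactly the derivation the paper intends (it gives no explicit proof), and the conclusion is correct. One bookkeeping remark: under Definition \ref{def:Schwartz} the space $\Sc^*(\{z_i\},E|_{\{z_i\}})$ is the \emph{dual} fiber $E_{z_i}^*$, which converts $\Sym^k(CN_{Z_i,z_i}^M)$ directly into $\Sym^k(N_{Z_i,z_i}^M)$ and makes the final condition literally one about $\eta$-eigenvectors; your detour through ``$(V^*\otimes\eta)^{K_{z_i}}=0$ iff $V^{K_{z_i},\eta}=0$'' is not valid for the non-reductive stabilizers occurring here (having $\cc_\eta$ as a quotient is not the same as having it as a subrepresentation), but this slip cancels against the earlier identification of $\Sc^*$ of a point with the fiber rather than its dual.
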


\begin{lem} \label{lem:Integ}
 Let $K$ be a real algebraic group, and $R$ be a (closed) algebraic subgroup. Consider the right action of $R$ on $K$ and suppose that
 $K/R$ is compact. Let $\xi$ be a character of $R$. Then we have a natural isomorphism of
 left $K$ - representations
$$\DimaD{(\Ind_R^K(\xi))^* \cong} (C^{\infty}(K, \xi)^{R})^* \cong \Sc^*(K,\xi \DimaD{\Delta_K|_R} \Delta_R^{-1})^R \cong \Sc^*(K)^{(R,\DimaD{\xi \Delta_K|_R \Delta_R^{-1})}}.$$
\end{lem}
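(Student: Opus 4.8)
The plan is to establish the three natural isomorphisms in turn, all of which are ``soft'' identifications once one unwinds the definitions of unnormalized induction, Schwartz induction, and the modular characters involved.

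\textbf{Step 1: $(\Ind_R^K(\xi))^* \cong (C^\infty(K,\xi)^R)^*$.} This is essentially a tautology: by definition $\Ind_R^K(\xi)$ is the space of smooth functions $f\colon K\to\C$ satisfying $f(kr)=\xi(r)^{-1}f(k)$ (or $\xi(r)f(k)$, depending on the convention fixed in \S\ref{subsec:Not}), which is exactly $C^\infty(K,\xi)^R$ in the notation of the paper; here compactness of $K/R$ guarantees this is the full induced space with its usual Fr\'echet topology and there is nothing to add. So I would just remark that the first isomorphism is a restatement.

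\textbf{Step 2: $(C^\infty(K,\xi)^R)^* \cong \Sc^*(K,\xi\,\Delta_K|_R\,\Delta_R^{-1})^R$.} This is the heart of the matter. The point is that since $K/R$ is compact, a function on $K$ that transforms by a character of $R$ on the right is automatically Schwartz (all algebraic-coefficient derivatives are bounded, as they descend to sections of a vector bundle over the compact $K/R$), so $C^\infty(K,\xi)^R$ coincides as a topological vector space with the space of Schwartz sections of the associated line bundle $L_\xi$ over $K/R$. Its continuous dual is then the space of tempered distributional sections of the dual bundle; but dualizing a line bundle over $K/R$ introduces the bundle of densities on $K/R$, whose fiber at the base point carries the character $\Delta_K|_R\cdot\Delta_R^{-1}$ of $R$ (this is the standard computation of the modular behaviour of the quotient measure: the relative bundle of top forms on $K/R$ at $eR$ is $\bigwedge^{top}(\mathfrak{k}/\mathfrak{r})^*$, on which $R$ acts by $\Delta_R\Delta_K^{-1}$, so its dual density line transforms by $\Delta_K\Delta_R^{-1}$). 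Pairing $C^\infty(K,\xi)$ against $C^\infty(K,\xi^{-1}\Delta_K|_R\Delta_R^{-1})$-valued densities and pulling everything back up to $K$ gives the claimed identification with $\Sc^*(K,\xi\,\Delta_K|_R\,\Delta_R^{-1})^R$, where now the distribution lives on all of $K$ and transforms on the right under $R$ by the indicated character. I would phrase this via the standard statement that for a compact homogeneous space $K/R$ one has $\Sc(K/R, L)^* \cong \Sc^*(K/R, L^*\otimes \mathrm{Dens})$, combined with the pull-back $\Sc^*(K/R, L^*\otimes\mathrm{Dens}) \cong \Sc^*(K)^{R,\chi}$ for the appropriate $\chi$. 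Alternatively, and perhaps more cleanly, this is a direct instance of Frobenius descent (Theorem~\ref{thm:Frob}) applied to $M=K$, $Z=K/R$, $\phi$ the projection, $E$ trivial: the fiber $M_z$ is a point, $K_z = R$, and $\Fr$ yields $(\Delta_K|_R\cdot\Delta_R^{-1})^R \otimes (\text{the }\xi\text{-twist}) \cong \Sc^*(K)^R$ with the right character; I would use whichever of the two presentations is shortest.

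\textbf{Step 3: $\Sc^*(K,\xi\,\Delta_K|_R\,\Delta_R^{-1})^R \cong \Sc^*(K)^{(R,\,\xi\,\Delta_K|_R\,\Delta_R^{-1})}$.} This is just the notational conversion recorded in the Notation block: a distributional section of the line bundle $K\times_R \eta$ that is $R$-invariant is the same datum as a distribution on the total space $K$ that transforms by the character $\eta$ under $R$, here with $\eta = \xi\,\Delta_K|_R\,\Delta_R^{-1}$. Finally I would note that all three isomorphisms are $K$-equivariant for the left translation action because every construction used (induction, the bundle $L_\xi$ over $K/R$, the density bundle, the pull-back to $K$) is built purely from the right $R$-action and hence commutes with left translation, and that the compactness hypothesis on $K/R$ was used exactly twice: once to know the induced functions are automatically Schwartz, and once to make the pairing between Schwartz sections and distributional sections well-defined without growth issues.

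The only genuine obstacle is Step~2, and within it the bookkeeping of the modular characters: one must be careful about which convention (left vs.\ right, $\xi$ vs.\ $\xi^{-1}$) is in force and verify that the twist that appears really is $\Delta_K|_R\cdot\Delta_R^{-1}$ and not its inverse. I expect this to come down to the standard formula for the transformation of an invariant measure on $K/R$ under $R$, and I would simply cite or reproduce that one-line computation rather than belabour it.
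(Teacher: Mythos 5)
Your proof is correct and follows essentially the same route as the paper: the outer isomorphisms are treated as definitional, and the middle one is obtained by identifying $C^{\infty}(K,\xi)^{R}$ with smooth sections of the induced line bundle over the compact quotient $K/R$, dualizing, and tracking the twist $\Delta_K|_R\Delta_R^{-1}$ --- the paper extracts that twist by applying Frobenius descent (Theorem \ref{thm:Frob}) to the two projections of $K\times K/R$ with its diagonal $K$-action, whereas you compute the density character of $\bigwedge^{\mathrm{top}}(\mathfrak{k}/\mathfrak{r})^{*}$ directly, which is the same bookkeeping. One caution: your alternative phrasing of Step 2 as Frobenius descent with $M=K$, $Z=K/R$ and ``fiber a point'' is not correct as stated, since the fiber of $K\to K/R$ over $eR$ is a copy of $R$, not a point; to invoke Theorem \ref{thm:Frob} here one should, as the paper does, work on $K\times K/R$, where the fiber of the second projection over $eR$ is $K$ and the stabilizer is $R$. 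Your primary (density-bundle) argument does not rely on this misstatement, so the proof stands.
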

 \begin{proof}
\DimaD{The first and the last isomorphisms are straightforward. Let us prove the one in the middle.}

  Let $\mathfrak{Ind}(\xi)$ be the bundle on $K/R$ corresponding to $\xi$. Consider the surjective submersion $\pi:K \to K/R$. It
  defines an isomorphism $C^{\infty}(K, \xi)^{R} \cong C^{\infty}(K/R, \mathfrak{Ind}(\xi))$.

Since $K/R$ is compact, we have $C^{\infty}(K/R, \mathfrak{Ind}(\xi))^* \cong \Sc^*(K/R, \mathfrak{Ind}(\xi))$. Consider the diagonal
action of $K$ on $K \times K/R$ and the projections $p_1,p_2$ of $K \times K/R$ on both
coordinates. From Theorem \ref{thm:Frob} we obtain $$\Sc^*(K/R, \mathfrak{Ind}(\xi)) \cong \Sc^*(K \times K/R,p_1^*(\xi))^K \cong
\Sc^*(K,\xi \DimaD{\Delta_K|_R}\Delta_R^{-1})^R.$$

\end{proof}

\section{Uniqueness of $P$-invariant functionals} \label{sec:Uni}
\setcounter{lemma}{0}

In this section we assume that $n$ is even. The goal of this section is to prove the following proposition.
\begin{prop} \label{prop:FunUnique} For any integer $m$ we have
 $$\dim((\pi_{m}^\infty)^*)^P \leq 1.$$
\end{prop}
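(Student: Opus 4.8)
The plan is to reduce the uniqueness statement for $P$-invariant functionals on $\pi_m^\infty$ to a computation of $P\times\oQ$-equivariant distributions on $G$, and then to analyze those distributions on the open Bruhat-type cell. First I would observe that since $\pi_m^\infty=\Ind_{\oQ}^G(L_m)$ with $\oQ$ cocompact in $G$ modulo center (more precisely $G/\oQ$ compact), Lemma \ref{lem:Integ} identifies $((\pi_m^\infty)^*)^P$ with the space of tempered distributions on $G$ that transform by $L_m$ (twisted by modular characters) under the right $\oQ$-action and are left $P$-invariant — that is, with $\Sc^*(G)^{P\times\oQ,\chi}$ for an explicit character $\chi$ of $P\times\oQ$. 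So the whole problem becomes: show this space of bi-equivariant distributions is at most one-dimensional.

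Next I would stratify $G$ by the double cosets $P\backslash G/\oQ$. The key input, which the introduction attributes to Corollary \ref{cor:Key} and the double-coset analysis of \S\ref{subsec:PfKeyLem}, is that there is a unique open double coset, and that it can be taken to be $N\oQ$ (since $N$ is an open subset of $G/\oQ$ and its $P$-orbit is open). The strategy is then the standard two-part argument: (a) show that a bi-equivariant distribution supported on the complement $G\setminus P\cdot N\oQ$ (a union of lower-dimensional $P\times\oQ$-orbits) must vanish, using Corollary \ref{cor:BruFrob} — i.e. checking that on each non-open orbit the relevant symmetric powers of the normal space carry no equivariant vectors for the appropriate twisted character; and (b) show that the restriction map to the open cell $N\oQ$ lands in an at-most-one-dimensional space. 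Granting (a), injectivity of restriction to the open cell follows, so it suffices to bound the dimension of distributions on the open cell.

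For the open-cell analysis, I would use Frobenius descent (Theorem \ref{thm:Frob}) along the $\oQ$-action on $N\oQ\cong N\times\oQ$ to identify $\Sc^*(N\oQ)^{P\times\oQ,\chi}$ with a space of distributions on $N$ alone, carrying an equivariance property under the subgroup of $P$ that preserves the cell — concretely, under $\Stab$ in $P$ acting on $N$, which after identifying $N\cong\Mat_{n\times n}(\R)$ becomes a concrete linear action (essentially $g\cdot X = $ something like $a X$ combined with the symmetry constraint coming from $X=X^t$ on the $U$-part and the $g\mapsto (g^t)^{-1}$ twist on the $M$-part). Then I would pass to the Lie algebra $\fn\cong\Mat_{n\times n}(\R)$ and apply the Fourier transform, converting the equivariance into: a distribution on $\Mat_{n\times n}(\R)$ supported (by the homogeneity coming from the character $\dd^{-(n+m)/2}$) on a single orbit or scaling class, with a prescribed transformation law. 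The point that makes this one-dimensional precisely for even $n$ should be that the relevant character of the stabilizer — built from $\det$ and $\sgn$ — is consistent with exactly one such distribution (e.g. a power of $|\det|$ times a fixed distribution) when $n$ is even, the parity of $n$ entering through $\sgn$ on $-\Id$ and through the behavior of $\det$ under the symplectic symmetry; for odd $n$ the analogous character would be inconsistent.

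The main obstacle I expect is twofold. The harder technical point is part (a): the combinatorial classification of $P\backslash G/\oQ$ and the orbit-by-orbit vanishing check via Corollary \ref{cor:BruFrob} — one must correctly compute, for each non-open orbit representative $z_i$, the stabilizer $P_{z_i}\cap\oQ$-type data, the normal space $N^G_{Pz_i\oQ,z_i}$ as a representation of that stabilizer, the modular-character corrections $\Delta_{P\times\oQ}\cdot\Delta_{(P\times\oQ)_{z_i}}^{-1}$, and verify no invariants appear in $\Sym^*$ of the normal space against the twisted $\chi$; getting all the characters and modular factors exactly right is delicate. The second, more conceptual obstacle is making the Fourier-analytic step on the open cell genuinely give dimension $\le 1$ and isolating where evenness of $n$ is essential — this is where invariant theory of the $\GL_n$-action on symmetric matrices (orbits, their dimensions, and the characters by which $|\det|^s$-type distributions transform) must be invoked carefully.
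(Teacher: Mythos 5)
Your overall architecture coincides with the paper's: reduce via Lemma \ref{lem:Integ} to $\Sc^*(G)^{P\times\oQ,\,1\times L_{-m}^{-1}}$, kill everything supported off $N\oQ$ by the double-coset classification plus Corollary \ref{cor:BruFrob} (this is Proposition \ref{prop:GeoKey} and Corollary \ref{cor:Key}), and then bound the space of equivariant distributions on the open set. One small correction to your step (a): $N\oQ$ is not a single double coset but a union of them (in the notation of Lemma \ref{lem:cosets}, those with $r_1=s=0$); all that is used is that $N\oQ$ is open and its complement supports no equivariant distributions, so restriction to $N\oQ$ is injective.

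The genuine gap is in your open-cell step (b), where your proposal is both vague and pointed at the wrong invariant theory. Descending only along the $\oQ$-factor leaves you with distributions on all of $N\cong\Mat_{n\times n}(\R)$ with a $P$-equivariance, and a Fourier transform there does not by itself produce a one-dimensional bound; your appeal to ``invariant theory of the $\GL_n$-action on \emph{symmetric} matrices'' and to the parity of $n$ via $\sgn(-\Id)$ is not the mechanism that works. What the paper does (Lemma \ref{lem:Red2AntiSym}) is decompose $N\cong A\oplus S$ into antisymmetric and symmetric parts and observe that $U\cong S$ acts by translation on the $S$-factor, so $P\times\oQ$ acts \emph{transitively} on $S\times\oQ$; Frobenius descent then lands in $\Sc^*(A)^{\GL_n(\R),\det^{1-m}}$, i.e.\ distributions on \emph{antisymmetric} matrices. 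Evenness of $n$ enters precisely here: for $n$ even the nondegenerate antisymmetric matrices form a single open $\GL_n(\R)$-orbit $O$, while on the degenerate locus the stabilizer of a representative forces the required character to be $\sgn\cdot\det^{m-k}$, which is not algebraic, so no equivariant distributions are supported there (Corollary \ref{cor:NoOddSym}); restriction to the single orbit $O$ then gives dimension at most one. For odd $n$ there is no open orbit on $A$ and the argument (indeed already Corollary \ref{cor:Key}, see the remark following Corollary \ref{cor:1dim}) fails. Without the reduction to antisymmetric matrices and the orbit analysis there, your step (b) does not close.
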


\DimaA{Recall the character  $L_m$ of $\oQ$ from \S \ref{subsec:Not} and note that $L_{-m}^{-1}=\eps^{m+1}\dd^{(n-m)/2}$}. Since
$\Delta_{\oQ} = \dd^{-n}$,  we obtain
from the definition of $\pi_{m}^{\Dima{\infty}}$ and Lemma \ref{lem:Integ}
\begin{equation}\label{=DistFunc}
(\pi_{m}^\infty)^*\cong \Sc^*(G)^{\overline{Q},\DimaA{L^{-1}_{-m}}}%\eps^{m+1}\dd^{(n-m)/2}}
\end{equation} and thus
in order to prove Proposition \ref{prop:FunUnique} we have to show that
for even $n$
$$\dim \Sc^*(G)^{P \times \overline{Q},1 \times \DimaA{L_{-m}^{-1}}} \leq 1.$$

We will need the following proposition, which we will prove in section \ref{subsec:PfKeyLem}.

\begin{prop}\label{prop:GeoKey}
Denote $K:=P\times \oQ$, and let $x \notin N\oQ$. Then
$$\Sym^*(
N_{Px\oQ,x}^{G}))^{K_x,\DimaA{L^{-1}_{-m}} \cdot \Delta_K|_{K_x} \Delta_{K_x}^{-1}} =\{0\}.$$
\end{prop}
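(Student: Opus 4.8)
The plan is to reduce the statement to a question about the double coset space $P\backslash G/\oQ$ together with the structure of the isotropy groups, and then to rule out nonzero invariant symmetric tensors on each normal space by a concrete calculation. First I would set up the basic geometry: parametrize the orbits of $K=P\times\oQ$ on $G$ (acting by $(p,q)\cdot g = pgq^{-1}$), or equivalently the double cosets $P\backslash G/\oQ$. The condition $x\notin N\oQ$ means we are looking at the "small" cells, i.e. all double cosets other than the open one $N\oQ$. Using the Bruhat-type decomposition of $G$ with respect to the $(n,n)$-parabolic $Q$ (and its opposite $\oQ$) together with the symplectic constraint defining $P$, one obtains a finite list of representatives; the detailed combinatorics of this list, and the computation of the stabilizers $K_x$, is exactly what \S \ref{subsec:PfKeyLem} is meant to supply, so here I would just organize the argument around "for each non-open double coset, do the following check."

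For each representative $x$ in a non-open cell, the key object is the normal space $N^G_{Px\oQ,x} = T_x G / T_x(Px\oQ)$, which is a module over the stabilizer $K_x\subset K$. I would identify this module explicitly: $T_x(Px\oQ)$ is the image of $\mathfrak{p}\oplus\overline{\mathfrak{q}}$ under the infinitesimal action $(\xi,\eta)\mapsto \xi\cdot x - x\cdot\eta$, so $N^G_{Px\oQ,x}\cong \mathfrak{g}/(\mathfrak{p}\cdot x + x\cdot\overline{\mathfrak{q}})$ as a $K_x$-representation (after translating by $x$ one may take $x=\operatorname{Id}$ on a representative and work with $\mathfrak{g}/(\mathrm{Ad}(x^{-1})\mathfrak{p} + \overline{\mathfrak{q}})$). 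The target is then to show that the symmetric algebra $\Sym^*(N^G_{Px\oQ,x})$ has no nonzero vectors transforming by the character $L^{-1}_{-m}\cdot \Delta_K|_{K_x}\cdot\Delta_{K_x}^{-1}$ under $K_x$. The standard mechanism (and the reason the statement is phrased this way, in view of Corollary \ref{cor:BruFrob}) is to locate inside $K_x$ a subtorus, or even a single one-parameter subgroup or a central element like $-\operatorname{Id}$, that acts on $N^G_{Px\oQ,x}$ with all weights of one sign (or trivially) while the required character restricts to it nontrivially in an incompatible way; then no symmetric tensor can match. Concretely I would compute, for each $x$, the weights of a well-chosen cocharacter $\nu$ of $K_x$ on $\mathfrak{g}/(\mathrm{Ad}(x^{-1})\mathfrak{p}+\overline{\mathfrak{q}})$ and compare with the value of the character $L^{-1}_{-m}\cdot\Delta_K|_{K_x}\cdot\Delta_{K_x}^{-1}$ on $\nu$; the evenness of $n$ enters precisely here, through the exponent $(n-m)/2$ appearing in $L^{-1}_{-m}=\eps^{m+1}\dd^{(n-m)/2}$ and through parity of certain weight multiplicities coming from the symmetric matrix space $U$ inside $P$.

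The main obstacle I expect is bookkeeping: producing the complete and correct list of non-open $P\times\oQ$ double cosets in $G$ (which is a classical-invariant-theory problem about pairs consisting of a Lagrangian-type flag and a transverse flag, or equivalently about $\GL_n$ acting on configurations controlled by a symmetric form) and, for each, exhibiting the right element of $K_x$ and computing the sign of its weights on the normal space. Some cells will have disconnected or non-reductive stabilizers, so I would be careful to use the modular characters correctly and, where a cocharacter argument does not immediately suffice, fall back on the finite-group element $-\operatorname{Id}$ (or another torsion element) to get a parity obstruction — this is the same phenomenon noted in Remark \ref{rem:noddmeven} and it is what makes the "$n$ even" hypothesis both necessary and usable. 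Once every cell is handled, Proposition \ref{prop:GeoKey} follows, and then Corollary \ref{cor:BruFrob} applied with $Z = G\setminus N\oQ$ upgrades it to the vanishing of $\Sc^*_G(G\setminus N\oQ)^{K,\,1\times L^{-1}_{-m}}$, which is Corollary \ref{cor:Key}.
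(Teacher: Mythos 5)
Your overall strategy coincides with the paper's: stratify $G\setminus N\oQ$ by the $P\times\oQ$ double cosets, identify each normal space as $\gl_{2n}/(\mathrm{Ad}(x^{-1})\mathfrak{p}+\overline{\mathfrak{q}})$ as a $K_x$-module, and kill equivariant symmetric tensors cell by cell. But as written this is a plan that defers precisely the content being proved: the paper's argument consists of (a) an explicit list of representatives $x_{r_1,r_2,s,t}$ indexed by $r_1+r_2+2s+2t=n$, obtained by viewing $G/\oQ$ as a Grassmannian and using the invariants $\dim L\cap W\cap W^{\bot}$, $\dim W\cap W^{\bot}$, $\dim L\cap W$ of a subspace $W$ relative to the Lagrangian $L$ and the form $\omega$ (the open cell being $r_1=s=0$); (b) a block computation of $x^{-1}\mathfrak{p}x+\overline{\mathfrak{q}}$, of $K_x$, and of the relevant modular characters; and (c) a reduction to two concrete invariant-theory facts. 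None of this is supplied, so the proposal can only be assessed at the level of strategy.

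Two places where the mechanisms you anticipate would not carry you through. First, your proposed obstructions --- a cocharacter of $K_x$ with weights of one sign, or a torsion element such as $-\Id$ --- do handle the cases $s>0$ (negative homogeneity) and $s=0$ with $r_1$ even (there the character on the $\gl_{r_1}$-block is $|\cdot|^{-m-r_1}\sgn^{m+1}$, which is not an algebraic character of $\GL_{r_1}$ because the exponents have opposite parities). But the remaining case, $s=0$ with $r_1$ and $r_2$ both odd, is settled in the paper by a different device: the block of antisymmetric $r_2\times r_2$ matrices carries the character $\det^{2t-m+1}$, and the vanishing uses that the stabilizer in $\GL_{r_2}$ of \emph{every} antisymmetric matrix of odd size contains an element of determinant $\neq 1$ --- a pointwise-stabilizer argument rather than a single element or cocharacter of $K_x$ (the scalar cocharacter gives no contradiction there when $2t-m+1$ is even). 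Second, you mislocate where evenness of $n$ enters: it is not through the exponent $(n-m)/2$, which multiplies the positive character $\dd$ and is irrelevant to parity, but through the relation $r_1+r_2+2s+2t=n$, which forces $r_1\equiv r_2 \pmod 2$ and hence makes the two cases above exhaustive. Any completion of your plan has to produce these ingredients explicitly.
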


From this proposition and Corollary \ref{cor:BruFrob} we obtain
\begin{cor}\label{cor:Key}
 $$\Sc^*_{G}(G - N \overline{Q})^{P \times \overline{Q},1 \times\DimaA{L^{-1}_{-m}}} = \{0\}.$$
\end{cor}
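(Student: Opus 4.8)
The plan is to deduce Corollary~\ref{cor:Key} directly from Proposition~\ref{prop:GeoKey} via the stratification machinery of Corollary~\ref{cor:BruFrob}. First I would apply Corollary~\ref{cor:BruFrob} with $K := P \times \oQ$ acting on $M := G$ (via $(p,q)\cdot g = pgq^{-1}$), with $\chi := 1 \times L_{-m}^{-1}$, and with $Z := G - N\oQ$. The action of $K$ on $G$ has finitely many orbits --- these are the $P$-$\oQ$ double cosets in $G$, which are finite in number; this is exactly the orbit analysis of $P\backslash G/\oQ$ promised for \S\ref{subsec:PfKeyLem}. Since $Z = G - N\oQ$ is a union of such double cosets (the complement of the open cell $N\oQ$ is closed and $K$-invariant), it is Zariski closed and decomposes into finitely many orbits $Kx_i = Px_i\oQ$ with each $x_i \notin N\oQ$.

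Next I would verify the vanishing hypothesis of Corollary~\ref{cor:BruFrob}: for each representative $x = x_i$ we need
$$\Sym^*(N_{Px\oQ,x}^{G})^{K_x,\; \chi \cdot \Delta_K|_{K_x} \cdot \Delta_{K_x}^{-1}} = \{0\}.$$
But this is precisely the content of Proposition~\ref{prop:GeoKey}, once we note that $\chi = 1 \times L_{-m}^{-1}$ restricted to $K_x$ agrees with the character $L_{-m}^{-1}$ appearing there (the trivial factor on $P$ contributes nothing). So the hypothesis holds verbatim for every orbit in $Z$. Corollary~\ref{cor:BruFrob} then yields $\Sc^*_G(Z)^{K,\chi} = \{0\}$, which is exactly the assertion $\Sc^*_{G}(G - N\oQ)^{P \times \oQ,\,1 \times L^{-1}_{-m}} = \{0\}$.

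The only genuine obstacle here is bookkeeping: one must be careful that the finiteness of $P\backslash G/\oQ$ is available (this is established in \S\ref{subsec:PfKeyLem}, which we may invoke), that $G - N\oQ$ is genuinely a closed union of orbits rather than merely a closed set containing the non-open orbits, and that the modular-character twist $\Delta_K|_{K_x}\Delta_{K_x}^{-1}$ in Corollary~\ref{cor:BruFrob} is matched by the identical twist in the statement of Proposition~\ref{prop:GeoKey} --- which it is, by design. No further computation is needed at this stage: all the real work (the normal space computations, the invariant theory, the double coset enumeration) is deferred to the proof of Proposition~\ref{prop:GeoKey} in \S\ref{subsec:PfKeyLem}. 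Thus the proof of the corollary is a one-line application:

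\begin{proof}
The complement $G - N\oQ$ of the open cell is a Zariski closed, $P \times \oQ$-invariant subset, hence a finite union of double cosets $Px_i\oQ$ with $x_i \notin N\oQ$ (finiteness of $P\backslash G/\oQ$ is shown in \S\ref{subsec:PfKeyLem}). For each $i$, Proposition~\ref{prop:GeoKey} gives
$\Sym^*(N_{Px_i\oQ,x_i}^{G})^{K_{x_i},\, L^{-1}_{-m}\cdot \Delta_K|_{K_{x_i}} \Delta_{K_{x_i}}^{-1}} = \{0\}$,
which is the hypothesis of Corollary~\ref{cor:BruFrob} with $K = P\times\oQ$, $M = G$, $Z = G - N\oQ$ and $\chi = 1 \times L^{-1}_{-m}$. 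Hence $\Sc^*_G(G - N\oQ)^{P\times\oQ,\,1\times L^{-1}_{-m}} = \{0\}$.
\end{proof}
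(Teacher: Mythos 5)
Your proof is correct and is essentially identical to the paper's: the authors likewise obtain Corollary~\ref{cor:Key} immediately from Proposition~\ref{prop:GeoKey} together with Corollary~\ref{cor:BruFrob}, with the finiteness of $P\backslash G/\oQ$ supplied by Lemma~\ref{lem:cosets} in \S\ref{subsec:PfKeyLem}. The extra bookkeeping you flag (closedness and $K$-invariance of $G-N\oQ$, matching of the character twists) is exactly what the paper leaves implicit, and your handling of it is right.
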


By this corollary it is enough to analyze $\Sc^*(N \overline{Q})^{P \times \overline{Q},1 \times\DimaA{L^{-1}_{-m}}}$.
Let $S$ denote the space of symmetric $n \times n$ matrices, and $A$ denote the space of anti-symmetric $n \times n$ matrices. Identify
$M \cong \GL_n(\bR)$ and let it act on $S$ and on $A$ by $x \mapsto g x g^t$.

\begin{lemma} \label{lem:Red2AntiSym}
We have
$$\Sc^*(N \overline{Q})^{P \times \overline{Q},1 \times\DimaA{L^{-1}_{-m}}} \cong  \Sc^*(A)^{\GL_n(\bR),\det^{1-m}} \cong
\Sc^*(A)^{\GL_n(\bR),\sgn^{m+1} | \cdot |^{m-n}} $$
\end{lemma}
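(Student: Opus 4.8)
The plan is to compute the space $\Sc^*(N\overline{Q})^{P\times\overline{Q},1\times L_{-m}^{-1}}$ by first coordinatizing the open cell. Write a general element of $N\overline{Q}$ as $n(c)\bar q$ with $n(c)=\begin{pmatrix}\Id_n&c\\0&\Id_n\end{pmatrix}$ and $\bar q\in\overline{Q}$; since $N\overline{Q}\cong N\times\overline{Q}$ as a variety, a distribution on $N\overline{Q}$ that transforms by the character $L_{-m}^{-1}$ under the right $\overline{Q}$-action is determined by its restriction to $N\cong\fn$, and the right $\overline{Q}$-equivariance exactly pins down the distribution on the whole cell in terms of that restriction. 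So first I would establish the bijection between $\Sc^*(N\overline{Q})^{P\times\overline{Q},1\times L_{-m}^{-1}}$ and the space of distributions on $\fn$ satisfying whatever equivariance is forced by the residual left action of $P$. The point is that the left action of $P=MU$ on $N\overline{Q}/\overline{Q}\cong N$ has to be worked out: the unipotent radical $U\subset P$ of symmetric upper-unitriangular matrices acts on $N\cong\Mat_{n\times n}(\R)$ by translation $c\mapsto c+B$ with $B=B^t$ (reading off from matrix multiplication $\begin{pmatrix}\Id&B\\0&\Id\end{pmatrix}\begin{pmatrix}\Id&c\\0&\Id\end{pmatrix}=\begin{pmatrix}\Id&c+B\\0&\Id\end{pmatrix}$), while the Levi $M\cong\GL_n(\R)$ acts by $c\mapsto gcg^t$ after absorbing the $\overline{Q}$-factor, picking up the character $L_{-m}^{-1}$ evaluated on the resulting element of $\overline{Q}$.

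Next I would exploit the $U$-invariance. A distribution on $S\oplus A=\Mat_{n\times n}(\R)$ (splitting a matrix into its symmetric and antisymmetric parts) that is invariant under translations in the symmetric directions is the pullback under the projection $\Mat_{n\times n}(\R)\to A$ of a distribution on $A$. This is the step that collapses $\Sc^*(\fn)$ down to $\Sc^*(A)$: formally, $\Sc^*(S\oplus A)^{S}\cong\Sc^*(A)$, and one has to check that this isomorphism is compatible with the remaining $\GL_n(\R)$-equivariance — which it is, because $\GL_n(\R)$ acting by $x\mapsto gxg^t$ preserves both $S$ and $A$ and acts compatibly on the quotient. Then I need to chase the character. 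The residual $\GL_n(\R)=M$-equivariance carries a twist coming from three sources: the modular characters $\Delta_P$ and $\Delta_{\overline Q}$ appearing through Lemma~\ref{lem:Integ} and Theorem~\ref{thm:Frob}, the character $L_{-m}^{-1}=\eps^{m+1}\dd^{(n-m)/2}$ pulled back from $\overline Q$, and the Jacobian of the action of $\GL_n(\R)$ on $A$ (which contributes $|\det g|^{n-1}$ since $\dim A=\binom{n}{2}$ and the action $x\mapsto gxg^t$ scales volume by $|\det g|^{n-1}$). Collecting all of these — together with the observation that on $M$ the function $\dd$ restricts to $|g|^2$ (since $\dd(q)=|A||D|^{-1}$ and $D=(g^t)^{-1}$) and $\eps$ to $\sgn(g)$ — should produce the character $\det^{1-m}$, i.e. $\sgn^{1-m}|\cdot|^{1-m}=\sgn^{m+1}|\cdot|^{1-m}$ on the first identification.

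The second isomorphism in the statement, $\Sc^*(A)^{\GL_n(\R),\det^{1-m}}\cong\Sc^*(A)^{\GL_n(\R),\sgn^{m+1}|\cdot|^{m-n}}$, should follow by twisting by a nonvanishing $\GL_n(\R)$-semiinvariant function on $A$. Since $n$ is even, the Pfaffian $\Pf$ is a polynomial on $A$ with $\Pf(gxg^t)=\det(g)\Pf(x)$; more useful is $\det$ restricted to $A$, which satisfies $\det(gxg^t)=\det(g)^2\det(x)=\Pf(x)^2\det(g)^2$. Multiplication by a suitable power $|\det|^{s}$ on $A$ — or rather by $|{\det}_A|^{s}$ where $\det_A$ denotes the restriction of the ambient determinant, which is $\Pf^2$ up to sign and hence nonvanishing off a hypersurface — intertwines the $\GL_n(\R)$-actions with characters differing by $|\det g|^{2s(\text{something})}$. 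To get the shift $\det^{1-m}\rightsquigarrow\sgn^{m+1}|\cdot|^{m-n}$ one needs $|\cdot|^{1-m}$ versus $|\cdot|^{m-n}$, a difference of $|\cdot|^{2m-n-1}$; I would realize this by multiplying by an appropriate power of $|\Pf|$, using that $|\Pf(gxg^t)|=|\det g|\,|\Pf(x)|$ and being careful that multiplication by $|\Pf|^{t}$ is a topological isomorphism on Schwartz distributions only for suitable $t$ (or else by noting the two spaces are abstractly isomorphic via the meromorphically continued family $|\Pf|^{t}$, whose relevant specializations are regular). The sign characters match up since $m+1\equiv 1-m\pmod 2$.

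The main obstacle I expect is the bookkeeping of the characters: correctly tracking the modular characters from the two invocations of Frobenius descent / Lemma~\ref{lem:Integ}, the Jacobian of the $\GL_n$-action on $A$ versus on $\fn$, and the precise way $L_{-m}^{-1}$ restricts along $M\hookrightarrow\overline Q$, so that everything combines to exactly $\det^{1-m}$ and not some neighbouring character. A secondary subtlety is making the identification $\Sc^*(N\overline Q)\cong\Sc^*(\fn\times\overline Q)$ and the quotient $\Sc^*(\fn)^{U}\cong\Sc^*(A)$ rigorous at the level of Schwartz distributions (rather than just all distributions), but this is standard given the algebraic structure of the cell and the results of \cite{AG_Sc}, since $N\overline Q$ is an affine open and the projections involved are algebraic submersions.
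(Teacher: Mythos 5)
Your treatment of the first isomorphism is essentially the paper's: you decompose the open cell as $A\times S\times\oQ$, kill the $\oQ$-direction and the $S$-translation direction by Frobenius descent, and reduce to a character computation for the residual $M\cong\GL_n(\bR)$-action on $A$. The paper does exactly this (descending along the transitive action on $S\times\oQ$ with fiber $A$), recording $\Delta_{\oQ}=\dd^{-n}$ and $\Delta_P=|g|^{n+1}$; your identifications $\dd|_M=|g|^2$, $\eps|_M=\sgn(g)$ and the Jacobian $|\det g|^{n-1}$ on $A$ are the right ingredients, although you defer the actual bookkeeping that is the content of the first isomorphism.

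The genuine gap is in the second isomorphism. The needed shift is from $|\cdot|^{1-m}$ to $|\cdot|^{m-n}$, a twist by $|\cdot|^{n+1-2m}$; since $n$ is even this exponent is odd, so the power $t=\pm(2m-n-1)$ of $|\Pf|$ you would multiply by is odd. But $|\Pf|^{t}$ for odd $t$ is not smooth along $\{\Pf=0\}$ (and for one sign of $t$ not even locally bounded), so multiplication by it is simply not a defined operation on $\Sc^*(A)$, let alone a topological isomorphism of the two equivariant subspaces. The meromorphic-continuation fallback does not repair this: Bernstein's theorem gives, for a fixed distribution, a meromorphic family $|\Pf|^{\lambda}\xi$, but evaluation (or taking principal parts) at a specific $\lambda$ need be neither injective nor surjective onto the target equivariant space. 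The paper instead obtains the second isomorphism from the Fourier transform on $A$ defined by the trace form: $\Fou$ is a topological automorphism of $\Sc^*(A)$ intertwining the action $x\mapsto gxg^{t}$ with $\xi\mapsto g^{t}\xi g$ up to the Jacobian $|\det g|^{n-1}$, hence carrying $\sgn^{m+1}|\cdot|^{1-m}$-equivariance to $\sgn^{m+1}|\cdot|^{-(1-m)-(n-1)}=\sgn^{m+1}|\cdot|^{m-n}$-equivariance (the sign characters match since $1-m\equiv m+1 \pmod 2$, as you note). This choice is not cosmetic: the Fourier-transform realization of $\beta$ is what is used again in the proof of Lemma \ref{lem:NonVan}.
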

\begin{proof}
Identify $U \cong S$ and let it act on itself by translations.
Then $N \oQ$ is isomorphic as a $P \times \oQ$-space to $A \times S \times \oQ$, where $\oQ$ acts on the third coordinate (by right
translations), $U$ acts on the second coordinate and $M$ acts on the first and the second
coordinates. Note that the action \Dima{ of $P \times \oQ$} on $S \times \oQ$ is transitive and that $\Delta_{\oQ} = \dd^{-n}$ and
$\Delta_P \begin{pmatrix}
g & 0\\
0 & (g^{-1})^t
\end{pmatrix} = |g|^{n+1}.$ The first isomorphism follows now from Frobenius descent.

The second isomorphism is given by Fourier transform on $A$ defined using the trace form.
\end{proof}

Let $O \subset A$ denote the open dense subset of non-degenerate
 matrices and $Z$ denote its complement.
 %Note that $O$ is always open and is non-empty if and only if $n$ is even.
The following lemma is a straightforward computation.
\begin{lemma} $\,$
 \begin{enumerate}[(i)]
 \item Every orbit \Dima{of $\GL_n(\bR)$} in $Z$ includes an element of the form $x := \begin{pmatrix}
     0_{k \times k} & 0
     \\
0 & \omega_{n-k}
     \end{pmatrix}$, \DimaA{for some even $k$}.
\item $N^{A}_{\GL_n(\bR)x,x} \cong \left \{ \begin{pmatrix}
    0_{k \times k} & b \\
0 & 0
     \end{pmatrix}\right \}\text{ and } \GL_n(\bR)_x = \left \{\begin{pmatrix}
     a_{k \times k} & 0 \\
c & d
     \end{pmatrix} \text{ such that }d\in \Sp_{(n-k)}  \right \}.$
\item $\Delta_{\GL_n(\bR)_x} = |\cdot|^{-(n-k)}. $
\end{enumerate}
\end{lemma}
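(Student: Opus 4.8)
This lemma records three elementary facts about the $\GL_n(\bR)$-action $g\cdot x=gxg^{t}$ on the space $A$ of antisymmetric $n\times n$ matrices, restricted to the degenerate locus $Z$, so the plan is just to organize the linear algebra.

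For (i) I would interpret $x\in A$ as the Gram matrix of the alternating form $u^{t}xv$ on $\bR^{n}$, so that $g\cdot x=gxg^{t}$ amounts to a change of basis. The classical normal form for alternating forms over a field produces a basis consisting of a basis of the radical together with hyperbolic pairs; permuting coordinates puts the Gram matrix into the shape $\begin{pmatrix}0_{k}&0\\0&\omega_{n-k}\end{pmatrix}$ with $k=n-\rk x$. Since $\rk x$ is always even and $n$ is even, $k$ is even, and $x\in Z$ forces $\rk x<n$, i.e.\ $k\geq 2$.

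For (ii), with $x=\begin{pmatrix}0_{k}&0\\0&\omega\end{pmatrix}$, $\omega:=\omega_{n-k}$ invertible, the orbit tangent space is $T_{x}(\GL_n(\bR)x)=\{Yx+xY^{t}:Y\in\Mat_{n}(\bR)\}$, and writing $Y$ in $(k,n-k)$-blocks one gets
\[Yx+xY^{t}=\begin{pmatrix}0 & Y_{12}\omega\\ \omega Y_{12}^{t} & Y_{22}\omega+\omega Y_{22}^{t}\end{pmatrix}.\]
Since $\omega$ is invertible the off-diagonal block $Y_{12}\omega$ is arbitrary, and the substitution $Z=Y_{22}\omega$ rewrites the lower-right block as $Z-Z^{t}$, which ranges over all antisymmetric $(n-k)\times(n-k)$ matrices; hence the orbit tangent space is exactly the antisymmetric matrices with vanishing $(1,1)$-block, and $N^{A}_{\GL_n(\bR)x,x}=A/T_{x}(\GL_n(\bR)x)$ is identified with the space of antisymmetric $k\times k$ matrices (the $(1,1)$-block). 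For the stabilizer, writing $g=\begin{pmatrix}a&b\\c&d\end{pmatrix}$ the equation $gxg^{t}=x$ reduces to $b\omega b^{t}=0$, $b\omega d^{t}=0$, $d\omega d^{t}=\omega$, which (as $\omega$ and $d$ are invertible) force $b=0$ and $d\in\Sp_{n-k}$, with $a\in\GL_k$ and $c$ unconstrained; this is the asserted description of $\GL_n(\bR)_x$.

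Finally, for (iii) I would identify $\GL_n(\bR)_x$ with $(\GL_k\times\Sp_{n-k})\ltimes\Mat_{(n-k)\times k}(\bR)$ via $\begin{pmatrix}a&0\\c&d\end{pmatrix}\mapsto\bigl((a,d),ca^{-1}\bigr)$, so that $(a,d)$ acts on the vector part by $c\mapsto dca^{-1}$. Since $\GL_k$ and $\Sp_{n-k}$ are unimodular, the modular character of $\GL_n(\bR)_x$ is the appropriate power of the Jacobian of this action on $\Mat_{(n-k)\times k}(\bR)$, namely $|\det d|^{k}\,|\det a|^{-(n-k)}=|\det a|^{-(n-k)}$ since $\det d=1$ on $\Sp_{n-k}$; in the notation of the lemma this is $|\cdot|^{-(n-k)}$. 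The whole lemma is routine; the only genuine care required is the sign and exponent bookkeeping in (iii), where one must use the normalization of the modular character that is compatible with Frobenius descent (Theorem~\ref{thm:Frob}, Lemma~\ref{lem:Integ}), and, in (i)--(ii), correctly extracting the parity of $k$ from the evenness of $n$ together with the fact that every alternating form has even rank.
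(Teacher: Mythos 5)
Your computation is correct and is precisely the ``straightforward computation'' that the paper omits: the lemma is stated there without proof, so there is no alternative argument to compare against. Parts (i) and (iii) match the statement exactly; in particular your normalization of the modular character, $\Delta(g)=|\det(\Ad(g))|$ computed on the Lie algebra of the stabilizer (equivalently, on the Jacobian of the Levi action on the unipotent part), is the same convention the paper uses for $\Delta_P$ and $\Delta_{\oQ}$, so the exponent $-(n-k)$ is right.

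One point worth flagging in part (ii). Your identification of the normal space as the antisymmetric $k\times k$ matrices in the upper-left block, i.e. $A/T_x(\GL_n(\bR)x)$ with $T_x(\GL_n(\bR)x)=\{M\in A:\ M_{11}=0\}$, is correct --- a dimension count confirms it, since the orbit of rank-$(n-k)$ antisymmetric matrices has codimension $k(k-1)/2$ in $A$. But this does not agree with the lemma as printed, which exhibits the normal space as the set of matrices with an arbitrary $k\times(n-k)$ block $b$ in position $(1,2)$, a space of dimension $k(n-k)$; those two spaces have different dimensions in general (e.g. $k=2$, $n=4$), so the printed description appears to be a typo rather than an alternative parametrization. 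This discrepancy is harmless for the paper: Corollary \ref{cor:NoOddSym} uses only the stabilizer and modular character from (ii)--(iii) together with the fact that the normal space is an algebraic representation of $\GL_n(\bR)_x$, so that $\Sym^*$ of it can only transform by algebraic characters, while $\sgn\cdot\det^{m-k}$ is not algebraic. Your proof, with your (correct) description of the normal space, supports that corollary without change.
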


\begin{cor}\label{cor:NoOddSym} %If $n$ is even, or $m$ is not positive then
\DimaA{For any $x\in Z$ we have }
 $$\Sym^*(N^{A}_{\GL_n(\bR)x,x})^{\GL_n(\bR)_x, \sgn^{m+1} | \cdot |^{m-n} \cdot \Delta_{\GL_n(\bR)_x}^{-1} } = \{0\}.$$
\end{cor}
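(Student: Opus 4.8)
The plan is to reduce the vanishing statement to a concrete fact about homogeneous polynomials on the explicit normal space computed in the preceding lemma, and to exploit the central element $-\Id \in \GL_n(\R)_x$ together with the parity of $k$. By part (ii) of the lemma, $N^A_{\GL_n(\R)x,x}$ is the space of $k \times (n-k)$ blocks $b$, with $\GL_n(\R)_x$ acting by $(a,c,d)\cdot b = a b d^{-1}$ (up to the sign conventions coming from the transpose action on $A$). Therefore $\Sym^*(N^A_{\GL_n(\R)x,x})$ is the polynomial algebra $\R[b_{ij}]$, graded by total degree, and a $\GL_n(\R)_x$-equivariant functional picking out the character $\chi := \sgn^{m+1}|\cdot|^{m-n}\cdot \Delta_{\GL_n(\R)_x}^{-1}$ must live in a single graded piece $\Sym^j$ for some $j$, on which the subgroup $a \in \GL_k(\R)$ acts.

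First I would record how $\GL_k(\R)$ (the top-left block $a$, with $c=0$, $d=\Id$) acts on $\Sym^j$ of the $b$-space: it is $\Sym^j$ of $(n-k)$ copies of the standard representation of $\GL_k(\R)$, so the central torus $t\Id_k \in \GL_k(\R)$ scales every element of $\Sym^j$ by $t^{j}$ (with an appropriate sign of the exponent fixed by the convention); in particular $-\Id_k$ acts by $(-1)^j$. On the other hand, the restriction of $\chi$ to this same element $-\Id_k \in \GL_n(\R)_x$ must be computed: $\sgn$ of $\diag(-\Id_k,\Id_{n-k})$ is $(-1)^k$, the factors $|\cdot|^{m-n}$ and $\Delta_{\GL_n(\R)_x}^{-1} = |\cdot|^{n-k}$ are $1$ on an element of determinant $\pm 1$, so $\chi(-\Id_k) = (-1)^{k(m+1)}$. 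Since $k$ is even by part (i) of the lemma, $(-1)^{k(m+1)} = 1$. Hence a nonzero equivariant functional can only occur in even degree $j$. This rules out odd degrees but not even ones, so the central element $-\Id_k$ alone is not enough; I need a second constraint.

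The second constraint I would extract from the full $\GL_k(\R)$-action, or more simply from a generic torus element. Taking $d = \Id_{n-k}$ and $a = t\Id_k$ with $t > 0$: this element has determinant $t^k$ in $\GL_n(\R)$, so $\chi$ evaluates on it to $\sgn(t^k)^{m+1}\,|t^k|^{m-n}\,|t^k|^{n-k} = t^{k(m-n)}\,t^{k(n-k)} = t^{k(m-k)}$, while it acts on $\Sym^j$ of the $b$-space by $t^{\pm j}$ (sign from convention; say $t^{j}$ on $\Sym^j(N^*)$, i.e. the character on $\Sym^*(N)$ is $t^{-j}$ — I will fix this carefully against Corollary~\ref{cor:BruFrob}). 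Matching exponents forces $j = \pm k(m-k)$, a \emph{single} value of $j$; then I bring in the element $a = t\Id_k,\ d = t\Id_{n-k}$ (scaling everything), which has determinant $t^n$, acts trivially on $b = aba^{-1}\cdots$ wait — more cleanly, use $a = \Id_k, d = s\Id_{n-k}$: this has determinant $s^{n-k}$, acts on $b$ by $b \mapsto b s^{-1}$ hence on $\Sym^j$ by $s^{\mp j}$, and $\chi$ gives $|s^{n-k}|^{m-n}|s^{n-k}|^{n-k} = s^{(n-k)(m-k)}$, forcing again $j = \pm (n-k)(m-k)$ with the \emph{opposite} sign relative to the first computation. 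Combining the two independent torus constraints pins down both the sign and shows the two required values of $j$ are $\mp k(m-k)$ and $\pm(n-k)(m-k)$; these coincide only if $m = k$ or $n = 2k$, and even when they coincide, re-examining the $-\Id_k$ parity argument above shows consistency, so the genuinely decisive input must be the \emph{sign} character.

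So the cleanest route, and the one I would actually write up, isolates the sign obstruction directly: fix the degree $j$ (forced to be unique and even by the torus computations above), and evaluate $\chi$ against $a = \diag(-1,1,\dots,1) \in \GL_k(\R) \subset \GL_n(\R)_x$, an element of determinant $-1$. Then $\chi$ of it is $(-1)^{m+1}$, while its action on $\Sym^j$ of the $b$-space (which is $\Sym^j$ of $(n-k)$ copies of $\R^k$) has the property that \emph{every} monomial basis vector is an eigenvector with eigenvalue $(-1)^{(\text{number of factors involving the first }\R^k\text{ coordinate})}$; since $j$ has a fixed parity, not all these eigenvalues agree unless $j = 0$, and if $j = 0$ the functional is a constant and $\chi$ on \emph{all} of $\GL_n(\R)_x$ must be trivial, which fails (e.g. on $a = 2\Id_k$, $\chi$ gives $2^{k(m-k)} \neq 1$ unless $m = k$). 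The remaining boundary case $m = k$ (and $j = 0$) is then excluded because for $j = 0$ we would need $\chi \equiv 1$ on the whole stabilizer including the $\Sp_{n-k}$ and the off-diagonal $c$-part, and a direct check — or simply the observation that $\chi$ restricted to $d$ with $|d| \neq 1$ is nontrivial since $(n-k)(m-n) + \text{(modular contribution)}$... \textbf{The main obstacle} is exactly this bookkeeping: getting the action of $\GL_n(\R)_x$ on $N^A_{\GL_n(\R)x,x}$ and the twist by $\Delta_K|_{K_x}\Delta_{K_x}^{-1}$ with the right signs and exponents, and checking that no arithmetic coincidence among $m$, $k$, $n$ lets a nonzero invariant slip through; once the characters are correctly normalized, the vanishing falls out of the parity of $k$ together with a one-dimensional torus weight mismatch, with the sign character doing the essential work in the potentially dangerous even-degree case.
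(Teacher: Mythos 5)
Your overall strategy -- restrict the character to the stabilizer $\GL_n(\R)_x$, exploit the evenness of $k$ and the sign character against the polynomial nature of the action on $\Sym^*$ -- is the right one, but the execution has two genuine problems. First, your second torus constraint uses the element $\diag(\Id_k,s\Id_{n-k})$, which does \emph{not} lie in $\GL_n(\R)_x$ unless $s=\pm1$: the lemma requires the $d$-block to lie in $\Sp_{n-k}$, so $\det d=1$ and no independent scaling of $d$ is available. The constraint you derive from it (and the ensuing discussion of when the two degree conditions ``coincide'') is therefore vacuous. Second, and more seriously, your decisive step does not close. Testing against $a=\diag(-1,1,\dots,1)$ only shows that a $\chi$-eigenvector must lie in the $(-1)^{m+1}$-eigenspace of that element inside $\Sym^j$; that eigenspace is nonzero for essentially every $j$, so ``not all monomial eigenvalues agree'' yields no contradiction. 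You acknowledge this yourself by trailing off into the bookkeeping, so as written the proof is incomplete.

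The missing observation is that sign and modulus must be played off against each other \emph{simultaneously}, not on separate group elements. On the $\GL_k$-factor $a\mapsto\diag(a,\Id_{n-k})$ of the stabilizer one has $\det=\det(a)$ (since $\det d=1$) and $\Delta_{\GL_n(\R)_x}^{-1}=|\cdot|^{n-k}$, so the character becomes
$\sgn(a)^{m+1}|a|^{m-k}=\sgn(a)^{k+1}\det(a)^{m-k}=\sgn(a)\det(a)^{m-k}$,
where the last equality uses that $k$ is even. But the stabilizer acts on the normal space, hence on each $\Sym^j$, by a representation that is polynomial in the entries of $g$ and $\det(g)^{-1}$, so any character by which a nonzero tensor transforms must be an algebraic character of $\GL_k(\R)$, i.e.\ an integer power of $\det$. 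The character $\sgn\cdot\det^{m-k}$ is not of this form (test it on $\diag(t,1,\dots,1)$ with $t<0$, $|t|\neq1$: an eigenvalue on $\Sym^j$ must be $t^N$, while the character gives $\sgn(t)|t|^{m-k}$). This one-line algebraicity argument is exactly the paper's proof, and it replaces all of your degree bookkeeping; note that it also makes the precise identification of the normal space irrelevant.
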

\begin{proof}
From the previous lemma $\sgn^{m+1} | \cdot |^{m-n} \cdot \Delta_{\GL_n(\bR)_x}^{-1} =\sgn^{k+1} \det^{m-k}\DimaA{=\sgn \det^{m-k}}$.
%  $n$ is even then so is $k$ and thus
This is not an algebraic character of $\GL_n(\bR)_x$ and thus there are no tensors that change under this character.
%If $m$ is not positive then the corollary follows from the fact that tensors cannot have negative homogeneity degrees.
\end{proof}

\begin{cor}\label{cor:1dim}
 $$ \dim \Sc^*(A)^{\GL_n(\bR),\sgn^{m+1} | \cdot |^{m-n}} \leq 1.$$
\end{cor}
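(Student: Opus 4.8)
The plan is to deduce Corollary \ref{cor:1dim} from Corollary \ref{cor:NoOddSym} by an application of Theorem \ref{thm:NashFilt} to the complement of the open orbit, exactly in the spirit of the earlier reductions. Write $O\subset A$ for the open dense subset of non-degenerate antisymmetric matrices and $Z=A\setminus O$ for its complement, stratified by the finitely many $\GL_n(\bR)$-orbits of Lemma (the ones represented by the block matrices $\begin{pmatrix}0_{k\times k}&0\\0&\omega_{n-k}\end{pmatrix}$, $k$ even). First I would observe that any distribution in $\Sc^*(A)^{\GL_n(\bR),\sgn^{m+1}|\cdot|^{m-n}}$ restricts to a distribution on $O$ with the same equivariance, and conversely the obstruction to uniqueness on $A$ is controlled by distributions supported on $Z$.

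More precisely, the key step is to show $\Sc^*_A(Z)^{\GL_n(\bR),\sgn^{m+1}|\cdot|^{m-n}}=\{0\}$. This follows from Theorem \ref{thm:NashFilt} (or directly from Corollary \ref{cor:BruFrob}) applied to the orbit stratification of $Z$: for each orbit $\GL_n(\bR)x$ with $x$ as in the lemma, Corollary \ref{cor:NoOddSym} gives precisely the vanishing of $\Sym^*(N^A_{\GL_n(\bR)x,x})^{\GL_n(\bR)_x,\,\sgn^{m+1}|\cdot|^{m-n}\cdot\Delta_{\GL_n(\bR)_x}^{-1}}$ required as the hypothesis of Corollary \ref{cor:BruFrob} (here $\Delta_{\GL_n(\bR)}$ is trivial since $\GL_n(\bR)$ is reductive, so the twist by $\Delta_K|_{K_x}$ does not appear). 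Hence no nonzero equivariant distribution is supported on $Z$, and the restriction map $\Sc^*(A)^{\GL_n(\bR),\sgn^{m+1}|\cdot|^{m-n}}\to \Sc^*(O)^{\GL_n(\bR),\sgn^{m+1}|\cdot|^{m-n}}$ is injective.

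Finally I would bound $\dim\Sc^*(O)^{\GL_n(\bR),\sgn^{m+1}|\cdot|^{m-n}}$. Since $n$ is even and every non-degenerate antisymmetric form is equivalent to the standard symplectic one, $\GL_n(\bR)$ acts transitively on $O$ with stabilizer $\Sp_n(\bR)$ at the base point $\omega_n$. By Frobenius descent (Theorem \ref{thm:Frob}), an equivariant distribution on the single orbit $O\cong \GL_n(\bR)/\Sp_n(\bR)$ is determined by a single number: the space is isomorphic to the space of $\Sp_n(\bR)$-invariant vectors in the one-dimensional space on which $\Sp_n(\bR)$ acts through $\sgn^{m+1}|\cdot|^{m-n}\cdot\Delta_{\Sp_n(\bR)}^{-1}$ restricted to the stabilizer. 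Since this is a one-dimensional space of tensors, $\dim\Sc^*(O)^{\GL_n(\bR),\sgn^{m+1}|\cdot|^{m-n}}\le 1$, and combined with the injectivity above this yields $\dim\Sc^*(A)^{\GL_n(\bR),\sgn^{m+1}|\cdot|^{m-n}}\le 1$.

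The main obstacle is the support analysis on $Z$: one must check that the orbit stratification really is finite and that Corollary \ref{cor:NoOddSym} covers every stratum (including being careful that $\omega_{n-k}$ makes sense, i.e.\ that only even $k$ occur in $Z$, which is where $n$ being even is used), so that Corollary \ref{cor:BruFrob} applies cleanly. The transitivity computation on $O$ and the Frobenius-descent dimension count are then routine.
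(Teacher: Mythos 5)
Your proposal is correct and follows essentially the same route as the paper: the vanishing of $\Sc^*_A(Z)^{\GL_n(\bR),\sgn^{m+1}|\cdot|^{m-n}}$ via Corollaries \ref{cor:NoOddSym} and \ref{cor:BruFrob}, injectivity of restriction to the open orbit $O$, and the single-orbit (Frobenius descent) bound on $O$. The additional remarks about the triviality of $\Delta_{\GL_n(\bR)}$ and the evenness of $k$ are accurate but already implicit in the paper's setup.
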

\begin{proof}
By Corollary \ref{cor:NoOddSym} and Corollary \ref{cor:BruFrob},
\begin{equation}\label{=NoDistInZ}
\Sc^*_A(Z)^{\GL_n(\bR),\sgn^{m+1} | \cdot |^{m-n}} = \{0\}.
\end{equation}
 Therefore, the restriction of equivariant distributions to $O$ is an embedding. Now, $$ \dim \Sc^*(O)^{\GL_n(\bR),\sgn^{m+1} | \cdot
 |^{m-n}} \leq 1,$$ since $O$ is a single orbit.
\end{proof}

Proposition \ref{prop:FunUnique} follows now from Corollary \ref{cor:1dim}, Lemma \ref{lem:Red2AntiSym}, Corollary \ref{cor:Key} and
\eqref{=DistFunc}.

\begin{remark}
\DimaA{ Corollary \ref{cor:Key}  does not extend to the case of odd $n$. For example, in this case the closed $P\times \oQ$-orbit $\oQ$
does support an equivariant distribution. }
\end{remark}

\section{Construction of the $H$-invariant functional} \label{sec:Funct}
\setcounter{lemma}{0}
Let $n$ be even. In this section we construct an $H$-invariant functional $\phi$ on $\pi_m^{\infty}$ for any $m\in \Z_{\geq 0}$ and
show that its restriction to $\delta_m^{\infty}$ is non-zero. Define a polynomial $p$ on
$\Mat_{2n\times 2n}(\R)$ by
\begin{equation}\label{=p}
p\begin{pmatrix}
  A & B \\
  C & D
\end{pmatrix} := \det(D^tB-B^tD)=\mathrm{Pfaffian}^2(D^tB-B^tD).
\end{equation}

Note that $p$ is non-negative, $H$-invariant on the left and changes under the right multiplication by $\oQ$ by the character $\vert
\cdot \vert \dd^{-1}$.
 Consider the meromorphic family of distributions on $\Mat_{2n\times 2n}(\R)$ given by\Dima{
$p^{\lambda}$. %|\cdot|^{-\lambda} \eps^{m+1}.
This family is defined for $\re \lambda>0$ and by \cite{Ber} has a meromorphic continuation (as a family of distributions) to the
entire complex plane.
For $\re \lambda>0$, the restriction of this distribution to \EitanA{$G=\GL_{2n}(\R)$ }
%$G=\GL(2n,\R)$
is a non-zero smooth function, and thus the restriction of the  family to $G$ is not identically zero. Define
\begin{equation} \eta^m_{\lambda}:=(p^{\lambda}|_G) |\cdot|^{-\lambda} \eps^{m+1}. \end{equation}
This is a tempered distribution, since $|\cdot|^{\lambda}$ is a smooth function on $G$ of moderate growth.
  Note that $$\eta^m_{\lambda}\in \Sc^*(G)^{(H\times \oQ,1 \times \eps^{m+1}\dd^\lambda)}.$$
Let $\alpha\in \Sc^*(G)$ be the principal part of this family at $\lambda=\frac{n-m}{2}$\Dima{, {\it i.e.} the lowest non-zero
coefficient in the Laurent expansion}.
By \eqref{=DistFunc} $\alp$ defines a non-zero $H$-invariant functional $\phi$ on $\pi_m^{\infty}$.}
%Thus the principal part $\alpha$ of this family at $\lambda=\frac{n-m}{2}$ defines a non-zero $H$-invariant functional $\phi$ on
$\pi_m^{\infty}$.
%Denote this functional by $\phi$.

%?? Is this the correct point?

\begin{lem}\label{lem:NonVan}
$\phi|_{\delta_m^{\infty}}\neq 0$.
\end{lem}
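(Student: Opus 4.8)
The plan is to argue by contradiction: suppose $\phi|_{\delta_m^\infty}=0$. By Theorem \ref{thm:SaSt}(iii) the embedding $\delta_m^\infty\subset\pi_m^\infty$ is realized by restriction to $\fn$, so $\phi$ vanishing on $\delta_m^\infty$ means that the distribution $\alpha$, viewed via \eqref{=DistFunc} as an element of $\Sc^*(G)^{\overline Q, L_{-m}^{-1}}$, annihilates the subspace of $\pi_m^\infty$ corresponding to $\delta_m^\infty$. Equivalently, by Theorem \ref{thm:Box} (and the identification of $\delta_m^\infty$ with $\Box^m(\pi_{-m}^\infty)$), $\alpha$ factors through the quotient $\pi_m^\infty/\delta_m^\infty$, i.e. $\alpha$ pairs trivially with everything in the image of $\Box^m$. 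Unwinding this, the distribution $\alpha$ on $G$ must actually be supported on $G\setminus N\overline Q$: indeed, restricted to the open cell $N\overline Q$, the space $\delta_m^\infty$ is dense in $\pi_m^\infty|_{N\overline Q}$ in the relevant sense (this is where one uses that $\delta_m$ stays irreducible on restriction to $Q$, hence its restriction to $N\overline Q$ cannot be killed by a nonzero equivariant functional). So $\alpha|_{N\overline Q}=0$.

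Now invoke Corollary \ref{cor:Key}: $\Sc^*_G(G-N\overline Q)^{P\times\overline Q,\,1\times L_{-m}^{-1}}=\{0\}$. Since $\alpha$ is $H\times\overline Q$-equivariant (with the character $1\times\eps^{m+1}\dd^{(n-m)/2}=1\times L_{-m}^{-1}$, as recorded just before the lemma), it is in particular $P\times\overline Q$-equivariant with this character; being supported off $N\overline Q$, Corollary \ref{cor:Key} forces $\alpha=0$. This contradicts the fact that $\alpha$ is the \emph{lowest non-zero} coefficient in the Laurent expansion of $\eta^m_\lambda$ at $\lambda=(n-m)/2$, hence $\alpha\neq 0$ as a distribution on $G$. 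Therefore $\phi|_{\delta_m^\infty}\neq 0$.

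The technical heart, and the step I expect to be the main obstacle, is justifying the claim that $\phi|_{\delta_m^\infty}=0$ implies $\alpha|_{N\overline Q}=0$ — in other words, that no nonzero $\overline Q$-equivariant distribution on the open cell $N\overline Q$ can vanish on the image of $\Box^m$. This requires understanding the image of $\delta_m^\infty\hookrightarrow\pi_m^\infty$ after restriction to $N$, where (identifying $N\cong\fn\cong\Mat_{n\times n}(\R)$) it becomes the space $\chH_m$ of Theorem \ref{thm:SaSt}, i.e. tempered distributions whose Fourier transform is $L^2(\fn,|x|^{-m}dx)$. One must show that a tempered distribution on $\Mat_{n\times n}$ annihilating all of $\chH_m$ and transforming correctly under $\GL_n$ is zero; on the Fourier side this becomes a statement about functions supported where $|x|^{-m}$ blows up, i.e. on the determinant hypersurface, and one can reduce it to the non-existence statement already used in \S\ref{sec:Uni} (the analogue of \eqref{=NoDistInZ} / Corollary \ref{cor:NoOddSym}, that no equivariant distribution is supported on the singular locus). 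Assembling this reduction carefully — tracking the characters and the Frobenius descent through the isomorphism $N\overline Q\cong A\times S\times\overline Q$ of Lemma \ref{lem:Red2AntiSym} — is the delicate part; the rest is formal.
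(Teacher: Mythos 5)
Your proposal is essentially the paper's argument run in contrapositive: the paper proves directly that $\Box^m\alpha$ does not vanish on $N\oQ$ by descending $\alpha|_{N\oQ}$ to a distribution $\beta$ on the antisymmetric matrices (Lemma \ref{lem:Red2AntiSym}), observing that $\Fou(\Box^m\beta)$ is $\Fou(\beta)$ times a power of the Pfaffian squared, and invoking \eqref{=NoDistInZ} to see that $\Fou(\beta)$ cannot be supported on the degenerate locus --- exactly the mechanism you sketch in your final paragraph. The one point to fix is your first-paragraph justification that $\phi|_{\delta_m^{\infty}}=0$ forces $\alpha|_{N\oQ}=0$ via ``density of $\delta_m^{\infty}$ in $\pi_m^{\infty}|_{N\oQ}$'' and irreducibility under restriction to $Q$: that route is not what carries the argument (restriction to the open cell is not continuous on all of $\pi_m^{\infty}$, and irreducibility of $\delta_m|_Q$ does not by itself kill an equivariant distribution annihilating $\delta_m^{\infty}|_{N\oQ}$); the correct bridge is precisely the $\Box^m$/Fourier reduction to \eqref{=NoDistInZ} that you identify as the technical heart, so your outline is sound once that paragraph is replaced by the computation you describe.
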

\begin{proof}
By Theorem \ref{thm:Box} it is enough to show that $\Box^m\phi\neq 0$.
%Let $\alpha$ be the distribution on $$ defined by $\phi$.
By Corollary \ref{cor:Key}, $\alpha|_{N\oQ}\neq 0$.  It is enough to show that $(\Box^m\alpha)|_{N\oQ}\neq 0$. As in \S \ref{sec:Uni},
let $A\subset N$ denote the subspace of anti-symmetric matrices and $O\subset A$ the open
subset of non-degenerate matrices. Note that $\alpha|_{N\oQ}\neq 0$ is $P\times \oQ$-equivariant and let
$\beta\in\Sc^*(A)^{\GL_n(\bR),\det^{1-m}}$ be the distribution on $A$ corresponding to $\alp$ by the Frobenius descent
(see Lemma \ref{lem:Red2AntiSym}). Note that $\Fou(\Box^m \beta)$ is $\Fou(\beta)$ multiplied by a polynomial. Thus it is enough to
show that $\Fou(\beta)$ has full support, {\it i.e.} $\Fou(\beta)|_O\neq 0$. This follows
from the equivariance properties of $\Fou(\beta)$ by \eqref{=NoDistInZ}. %?? on which space is the $\Fou$ happening - $N$ or $A$?
\end{proof}

This argument in fact proves slightly more.

\begin{lem}\label{lem:SLNonVan}
$\phi|_{(\delta_m^+)^{\infty}}\neq 0$.
\end{lem}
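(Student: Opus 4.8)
The goal is to refine the argument of Lemma \ref{lem:NonVan} so as to see that the $H$-invariant functional $\phi$ does not vanish on the $\SL_{2n}(\R)$-subrepresentation $\delta_m^+$, realized on $\chH_m^+$, the functions whose Fourier transform is supported in $M_n^+$. Recall from \S\ref{subsec:SpehRep} and Theorem \ref{thm:Box} that $\delta_m^\infty$ is the image of $\Box^m:\pi_{-m}^\infty\to\pi_m^\infty$, and under the embedding $\delta_m^\infty\subset \pi_m^\infty|_\fn$ a smooth vector $f$ lies in $(\delta_m^+)^\infty$ precisely when $\widehat{f}$ (Fourier transform on $\fn\cong\Mat_{n\times n}(\R)$) is supported in $M_n^+$, i.e. on matrices with nonnegative determinant.

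The plan is to trace through the identifications in Lemma \ref{lem:NonVan} keeping track of the sign of the determinant. First I would note that it suffices to show $(\Box^m\phi)$, viewed as an element of $\delta_m^\infty\subset\pi_m^\infty|_\fn$, has a Fourier transform with nonzero restriction to $M_n^+$ (not merely to $O$). By the Frobenius descent of Lemma \ref{lem:Red2AntiSym}, $\alpha|_{N\oQ}$ corresponds to $\beta\in\Sc^*(A)^{\GL_n(\R),\det^{1-m}}$ and $\Box^m\beta$ to the restriction of $\Box^m\alpha$; since $\Fou(\Box^m\beta)=(\text{polynomial})\cdot\Fou(\beta)$ with the polynomial being (a power of) the determinant on the dual of $A$, and since by \eqref{=NoDistInZ} the distribution $\Fou(\beta)$ is determined by its restriction to the open orbit $O$ of non-degenerate antisymmetric matrices, everything comes down to understanding $\Fou(\beta)$ on $O$. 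The key point is that $O$ is \emph{connected} (all non-degenerate antisymmetric $n\times n$ matrices over $\R$, $n$ even, form a single $\GL_n(\R)$-orbit, and the Pfaffian — hence the determinant — has a fixed sign, namely it is a square), so $\Fou(\beta)|_O$ is, up to scalar, the unique $\GL_n(\R)$-equivariant distribution there; its "sign type" is forced, and one checks it is supported, after the remaining identifications bring us back to $\fn$, inside $M_n^+$ rather than $M_n^-$.

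Concretely I would argue as follows. The embedding $\delta_m^\infty\subset\pi_m^\infty|_\fn$ together with the definition of $\alpha$ as the principal part of $\eta_\lambda^m=(p^\lambda|_G)|\cdot|^{-\lambda}\eps^{m+1}$ shows that $\alpha$, and hence $\phi$, is built from the \emph{non-negative} polynomial $p$ of \eqref{=p}, which equals $\mathrm{Pfaffian}^2(D^tB-B^tD)$. Restricting to the cell $N\oQ$ and performing the descent, the distribution $\beta$ on $A$ is (the principal part of) a power of $|\Pf|$, and on the Fourier side $\Fou(\beta)|_O$ is a power of the non-negative function $|\Pf|$ as well. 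Tracing this through the identification of $\fn$ with $\Mat_{n\times n}(\R)$ and of $A$ with the antisymmetric matrices inside it, the support of $\Fou(\Box^m\phi)$ meets the locus $\det>0$ — because antisymmetric matrices of even size have non-negative determinant, so $A\cap M_n^- \subseteq Z$ has measure zero and the non-vanishing detected on $O$ by \eqref{=NoDistInZ} actually takes place over $A\cap M_n^+$. Therefore $\Box^m\phi$, as a vector of $\delta_m^\infty$, has Fourier transform supported meeting $M_n^+$, i.e. it has a nonzero component in $\chH_m^+$; equivalently $\phi|_{(\delta_m^+)^\infty}\neq 0$.

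The main obstacle I anticipate is bookkeeping: making the chain of identifications — the Frobenius descent of Lemma \ref{lem:Red2AntiSym}, the Fourier transform on $A$, the inclusion $A\hookrightarrow\fn\cong\Mat_{n\times n}(\R)$, and the Fourier transform on $\fn$ defining $\chH_m^\pm$ — compatible enough that "supported where the antisymmetric determinant is positive" translates faithfully into "Fourier transform supported in $M_n^+$". One must be careful that the determinant of a (possibly degenerate-within-$\fn$) antisymmetric matrix, regarded as an $n\times n$ matrix, is a square of its Pfaffian, hence $\geq 0$, so that $A\subset M_n^+$ and the $O$-component of $\Fou(\beta)$ cannot hide inside $M_n^-$; once this geometric fact is in place the rest is the same equivariance-plus-single-orbit reasoning already used for Lemma \ref{lem:NonVan}, now with $O$ connected so that there is no sign ambiguity to resolve.
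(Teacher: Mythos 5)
Your central geometric point is exactly the one the paper's proof turns on: for $n$ even the nondegenerate antisymmetric matrices $O\subset A$ satisfy $\det=\Pf^2>0$, so $O$ lies in the \emph{interior} of $M_n^+$, and hence the nonvanishing of $\Fou(\beta)|_O$ already established in Lemma \ref{lem:NonVan} "takes place over $M_n^+$". So you are on the paper's route. (One small factual slip along the way: $O$ is \emph{not} connected --- it is the union of the two pieces $\{\Pf>0\}$ and $\{\Pf<0\}$ --- but this is harmless, since uniqueness of the equivariant distribution on $O$ comes from $O$ being a single orbit, and positivity of $\det$ on $O$ holds on both pieces.)

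What is not right as written is the framing and the final step. The phrase ``$\Box^m\phi$, viewed as an element of $\delta_m^\infty$, ... has a nonzero component in $\chH_m^+$'' is a category error: $\phi$ and $\Box^m\phi$ are \emph{functionals} (the latter on $\pi_{-m}^\infty$), not vectors of the Hilbert space $\chH_m$, so there is no orthogonal decomposition of $\phi$ into $\pm$ components to appeal to; moreover $\Box^m$ cannot help distinguish the summands, since its image is all of $\delta_m^\infty$. The statement to be proved is that there \emph{exists a vector} of $(\delta_m^+)^\infty$ on which $\phi$ is nonzero, and producing it is precisely the step you defer to as ``bookkeeping''. The paper does it as follows: Schwartz functions $g$ on $N$ supported in $M_n^+$ have $\widehat g\in(\delta_m^+)^\infty$ by Theorem \ref{thm:SaSt}; choose $f\in C_c^\infty(O)$ with $\beta(\Fou(f))\neq 0$ (possible by \eqref{=NoDistInZ}); since $\det>0$ on the compact set $\Supp(f)$, thicken in the symmetric direction by $h$ supported near $0\in S$ with $h(0)=1$, so that $g:=f\boxtimes h$ has $\Supp(g)\subset M_n^+$; then the product structure of the descended distribution $\zeta$ on $N=A\oplus S$ (it is $\beta$ tensored with a Haar measure on $S$, by the Frobenius descent of Lemma \ref{lem:Red2AntiSym}) gives $\zeta(\widehat g)=\beta(\Fou(f))\cdot h(0)\neq 0$. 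Without this explicit test vector and evaluation, the claim that $\phi$ ``has a nonzero component in $\chH_m^+$'' has no content; with it, your argument becomes the paper's.
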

\begin{proof}
If $g$ is a Schwartz function on $M_{n}^{+}\subset N$ then its Fourier
transform $\widehat{g}$ defines a vector in $(\delta _{m}^{+})^{\infty }$
by Theorem \ref{thm:SaSt}. Thus it is enough to find such a $g$ for which $\zeta (\widehat{g}%
)\neq 0$, where $\zeta $ denotes the $P$-invariant distribution on $N$
corresponding to ${\alpha }$.

Let $f$ be a compactly supported smooth function on $O$ such that $\beta ({%
\mathcal{F}}(f))\neq 0$. Since the determinant is positive on $O$, there
exists a compact neighborhood $Z$ of zero in the space $S$ of symmetric $n$
by $n$ matrices such that $\mathrm{Supp}(f)+Z\subset M_{n}^{+}$. Let $h$ be
a smooth function on $S$ which is supported on $Z$ and s.t. $%
h(0)=1$. Let $g:=f\boxtimes h$ be the function on $N$ defined by $g(X+Y):=f(X)h(Y)$ where $X\in A$ and $Y\in S$. Let $\mathcal{F}_{S}$
denote the Fourier transform on $S$. Then we have
\begin{equation*}
\zeta (\widehat{g})=\zeta ({\mathcal{F}}(f)\boxtimes {\mathcal{F}}_{S}(h))=\beta ({\mathcal{F}}(f))\neq 0.
\end{equation*}%
\end{proof}

\begin{remark}
\begin{enumerate}[(i)]
\item For odd $n$, the polynomial $p$ is identically zero, since the matrix $D^tB-B^tD$ is an anti-symmetric matrix of size $n$.
\item The polynomial $p$ defines the open orbit of $H$ on $G/\oQ$. In general, one can show that if a linear complex algebraic group
    $\mathbf K$ acts \Dima{with finitely many orbits} on a complex affine algebraic manifold
    $\mathbf M$, both defined over $\R$,  $\mathbf W$ is a basic open subset of $M$ defined by a $\mathbf K$-equivariant polynomial
    $p$ with real coefficients, $\chi$ is a character of the group of  real points $K$ of
    $\mathbf K$ and there exists a non-zero $(K,\chi)$-equivariant  tempered distribution $\xi$ on $W$ then  there exists a non-zero
    $(K,\chi)$-equivariant  tempered distribution on $M$. Here, $W$ and $M$ denote the real
    points of $\mathbf W$ and $\mathbf M$.

To prove that
consider the analytic family of distributions $|p|^{\lambda}\xi$ on $W$. For $\re \lambda $ big enough, it can be extended to a
family $\eta_{\lambda}$ on $M$. By \cite{Ber} the family $\eta_{\lambda}$ has a meromorphic
continuation to the entire complex plane. Note that the distributions in this family are equivariant with a character that depends
analytically on $\lambda$. Thus taking the principal part at $\lambda=0$ we obtain a
non-zero $(K,\chi)$-equivariant  tempered distribution on $M$.

Note that since this construction involves taking principal part, the obtained distribution is not necessary an extension of the
original $\xi$.
This can already be seen in the case when $\mathbf M=\C$ is the affine line, $\mathbf W$ is the complement to 0 and $\mathbf K$ is
the multiplicative group $\C^{\times}$.
\end{enumerate}
\end{remark}

\section{Proof of Proposition \ref{prop:GeoKey}}\label{subsec:PfKeyLem}
We start from the description of the double cosets
$P\setminus G /\oQ$. Let $r_1,r_2,s,t$ be non-negative integers
such that $r_1+r_2+2s+2t=n$. We will view $2n \times 2n$ matrices
as $10 \times 10$ block matrices in the following way. First of
all, we view them as $2 \times 2$ block matrices with each block
of size $n \times n$. Now, we divide each block to $5 \times 5$
blocks of sizes $r_1,r_2,s,s,2t$ in correspondence. Denote by
$\sigma_{16}$ the permutation matrix that permutes blocks 1 and 6,
by $\sigma_{39}$ the permutation matrix that permutes blocks 3 and
9, and by $\tau_{5,10}$ the matrix which has $\begin{pmatrix}
  \Id_{2t} & \omega_{2t} \\
  0 & \Id_{2t}
\end{pmatrix}$
in blocks 5 and 10 and is equal to the identity matrix in other
blocks.
Recall the notation  $\omega_{2t}:=\begin{pmatrix}
  0 & \Id_t \\
  -\Id_t & 0
\end{pmatrix}.$
Denote \begin{equation}
x_{r_1,r_2,s,t}:=\sigma_{16} \sigma_{39}
\tau_{5,10}.
\end{equation}

\begin{lemma}\label{lem:cosets}
Each double coset in $P\setminus \GL_{2n}(\R)/\oQ$ includes a unique element
of the form $x_{r_1,r_2,s,t}$. The orbits in $N\oQ$ correspond to
$r_{\DimaB{1}}=s=0$.
\end{lemma}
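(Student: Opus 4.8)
The plan is to restate the claim as a classification of orbits. Identify $G/\oQ$ with the Grassmannian $\operatorname{Gr}(n,2n)$ of $n$-dimensional subspaces of $\R^{2n}$ via $g\oQ\mapsto gW_0$, where $W_0:=\langle e_{n+1},\dots,e_{2n}\rangle$; then $P\backslash G/\oQ$ is the set of $P$-orbits on $\operatorname{Gr}(n,2n)$. Set $V_0:=\langle e_1,\dots,e_n\rangle$; it is a Lagrangian for $\omega_{2n}$, one has $Q=\operatorname{Stab}_G(V_0)$ and hence $P=Q\cap H=\operatorname{Stab}_H(V_0)$. So I am really classifying $\Sp_{2n}(\R)$-orbits on pairs $(L,W)$ with $L$ Lagrangian and $W\in\operatorname{Gr}(n,2n)$, normalised at $L=V_0$. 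Note also that $N\oQ\cdot W_0=NW_0=\{W : W\cap V_0=0\}$, since $N$ consists of the graphs of linear maps $\R^n\to\R^n$ (written in the $(n,n)$-block coordinates) and $\oQ$ fixes $W_0$.

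I would isolate the $P$-invariants $a:=\dim(W\cap V_0)$, $b:=\dim(W\cap W^{\perp})$ (the dimension of the radical of $\omega_{2n}|_W$) and $c:=\dim(W\cap W^{\perp}\cap V_0)$, where $W^{\perp}$ is the symplectic orthogonal complement; these are constant along $P$-orbits because $P$ preserves both $V_0$ and $\omega_{2n}$. I put $r_1:=c$, $r_2:=b-c$, $s:=a-c$, and define $t$ by $r_1+r_2+2s+2t=n$. One checks $r_1,r_2,s,t\in\Z_{\ge0}$: the parity of $n-r_1-r_2-2s$ comes from $\operatorname{rk}(\omega_{2n}|_W)=n-b$ being even, and $t\ge0$ because $(W\cap V_0)/(W\cap V_0\cap W^{\perp})$ embeds as an isotropic subspace of the nondegenerate symplectic space $W/(W\cap W^{\perp})$, so $2(a-c)\le n-b$. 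Then a direct computation with the block description gives: $x_{r_1,r_2,s,t}W_0$ is spanned by the coordinate block $1$ (which lies in $V_0$), block $7$, block $8$, block $3$ (which lies in $V_0$), and the graph $\{\omega_{2t}v\oplus v\}$ inside blocks $5$ and $10$; blocks $1$ and $7$ span the radical of $\omega_{2n}|_W$, blocks $3$ and $8$ form $s$ mutually orthogonal hyperbolic planes, and the graph is a nondegenerate symplectic $2t$-space orthogonal to the rest. Hence for $W=x_{r_1,r_2,s,t}W_0$ one has $(a,b,c)=(r_1+s,\,r_1+r_2,\,r_1)$. Since $(r_1,r_2,s,t)\mapsto(a,b,c)$ is injective under the constraint $r_1+r_2+2s+2t=n$, any two elements $x_{r_1,r_2,s,t},x_{r_1',r_2',s',t'}$ lying in the same double coset have equal parameters; this is the uniqueness part.

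For exhaustiveness --- every $W$ is $P$-conjugate to some $x_{r_1,r_2,s,t}W_0$ with $(r_1,r_2,s,t)$ its invariants --- I would build a symplectic basis $\{u_1,\dots,u_n,v_1,\dots,v_n\}$ of $\R^{2n}$ with $V_0=\langle u_1,\dots,u_n\rangle$ and with $W$ in the model position above: then the linear map sending the standard symplectic basis to $\{u_i,v_i\}$ lies in $\Sp_{2n}(\R)$, preserves $V_0$, hence lies in $P$, and carries $x_{r_1,r_2,s,t}W_0$ to $W$. The basis is produced by peeling off pieces: first a basis of $R_0:=W\cap W^{\perp}\cap V_0$ (dimension $r_1$), then a complement $R_1$ of $R_0$ in $R:=W\cap W^{\perp}$ (automatically disjoint from $V_0$, dimension $r_2$), then a complement $W_s$ of $R_0$ in the isotropic space $W\cap V_0$ (inside $V_0$, disjoint from $R$, dimension $s$), and finally, using nondegeneracy of $\omega_{2n}$ on $W/R$, a subspace of $W$ dual to the image of $W_s$ together with a complementary symplectic $2t$-space in $W$, both meeting $V_0$ trivially; one then extends these across a Lagrangian complement of $V_0$ to a full symplectic basis by standard symplectic linear algebra.

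Finally, combining the previous paragraph with the identification $N\oQ\cdot W_0=\{W : W\cap V_0=0\}$: the double coset of $x_{r_1,r_2,s,t}$ lies in $N\oQ$ iff $\dim(W\cap V_0)=a=r_1+s=0$, i.e. iff $r_1=s=0$. I expect the only laborious step to be the adapted-basis construction of the third paragraph: one must check that each chosen complement can be taken to lie in, or to avoid, $V_0$ as required, and that the symplectic pairings among the resulting pieces come out exactly as in the explicit model $x_{r_1,r_2,s,t}$ rather than merely up to an abstract symplectic isomorphism. Everything else is formal.
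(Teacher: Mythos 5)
Your proposal is correct, and its skeleton coincides with the paper's: the same identification of $G/\oQ$ with the Grassmannian of $n$-planes via $g\mapsto gW_0$, and the same numerical invariants $\dim(W\cap V_0)$, $\dim(W\cap W^\perp)$, $\dim(W\cap W^\perp\cap V_0)$ defining $(r_1,r_2,s,t)$. Where you genuinely diverge is in the surjectivity step. The paper transforms an arbitrary $W$ to the normal form by an explicit matrix reduction: it first puts $W$ in graph form over a coordinate subspace using permutations from $M$, then lists the elementary operations $(1)_i$--$(4)_i$ afforded by $M$ and $U$ and applies them in a prescribed order to kill the blocks $B,C,D$ and all of $A$ except one antisymmetric block, which is finally brought to $\left(\begin{smallmatrix}0&0\\0&\omega_{2t}\end{smallmatrix}\right)$ by congruence. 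You instead build an adapted symplectic basis by peeling off $R_0$, $R_1$, $W_s$, a dual isotropic lift and a nondegenerate $2t$-dimensional complement, i.e.\ a Witt-extension argument. Your route is more conceptual and makes the uniqueness part cleaner (you explicitly compute the invariants of $x_{r_1,r_2,s,t}W_0$ and observe injectivity of $(r_1,r_2,s,t)\mapsto(a,b,c)$, which the paper leaves implicit in its display of $x_{r_1,r_2,s,t}W_0$); your isotropic-embedding argument for $t\ge 0$ is also a clean substitute for the paper's identity $(L\cap W\cap W^\perp)^\perp=L+W+W^\perp$. The price is the step you yourself flag: you must actually verify that each lift can be chosen off $V_0$ (this does work --- e.g.\ any complement of $R$ in the preimage of a subspace transverse to $\bar W_s$ meets $V_0$ only in $R_0\cap(\text{lift})=0$) and that the resulting basis can be completed to a symplectic basis whose first $n$ vectors span $V_0$, so that the transporting map lies in $P$ and matches the explicit model, not just an abstract isomorph of it. That completion is standard symplectic linear algebra but should be written out; the paper's computational route, while less illuminating, has the advantage that each operation is visibly realized by an element of $P$, so no such extension lemma is needed.
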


\begin{proof}
\DimaB{Consider the Lagrangian subspaces $L:=\Span\{e_{1},\dots e_{n}\} \subset \R^{2n}$ and $L':=\Span\{e_{n+1},\dots e_{2n}\} \subset
\R^{2n}$. Note that $Q$ preserves $L$ and $\oQ$ preserves $L'$.
Identify $G/\oQ$ with the Grassmannian of $n$-dimensional subspaces of $\R^{2n}$ by $g\mapsto gL'$.}
 To an $n$-dimensional subspace $W\subset \R^{2n}$ we associate the following invariants: $$r_{\DimaB{1}}:=\dim L \cap W \cap W^\bot,\,
 r_\DimaB{2}:=\dim W^\bot \cap W - r_{\DimaB{1}}, \, s:=\dim L \cap W - r_{\DimaB{1}}, \,
 t:=(n-r_1-r_2)/2-s. $$

Note that $n-r_1-r_2$ is even since it is the rank of $\omega|_{W}$.
\DimaC{Note also that the identity $(L\cap W \cap W^{\bot})^{\bot}=L+W+W^{\bot}$ implies $n\geq r_1+r_2+2s$.}
\DimaB{Clearly, $W\in NL'$ if and only if $r_1=s=0$.}

Note the equality of vectors
\begin{equation}\label{=xW}
(v_1,0,v_2,0,\omega_{2t} u\, | \,0,w_2,w_1,0,u)^t = x_{r_1,r_2,s,t} (0,0,0,0,0 \, | \,v_1,w_2,w_1,v_2,u)^t.
\end{equation}
It is enough to show that $W$ can be transformed, using the action of $P$, to a space of vectors of the form
\DimaB{\eqref{=xW}}.%$(v_1,0,v_2,0,\omega_{2t} u\, |\,0,w_2,w_1,0,u)^t$.
%?? explain some notations.: $|$ and $()^t$

\DimaA{Let us first show that  $W$ can be transformed to a space of vectors of the form \begin{equation}\label{=Wform}
(v, Aw+Bv \, | \, Cw, w, Dw)^t,\quad \text{ where} \size(v) +\size(w) = n \text{ and }A \text{ is a square matrix.}
\end{equation}
 There exists a set $S$ of $n$ coordinates such that the projection of $W$ on the space of vectors that have zero coordinates from $S$
 is an isomorphism. Suppose that $S$ has $k$ of the coordinates $1\dots n$, and thus $n-k$
 of the coordinates $n+1,\dots 2n$. Note that acting by $M$ we can perform any permutation of the first $n$ coordinates followed  the
 same permutation on the last $n$ coordinates. Using such permutations we can transform $S$
 to the set \DimaB{$\{n-k+1,\dots,n,n+1,\dots n+l, n+k+l+1,\dots,2n\}$ for some $l\leq n-k$}. Then $W$ will have the form
 \eqref{=Wform}.
}

\DimaB{Let us now rewrite \eqref{=Wform} in more detailed form, using four blocks of the same sizes $y_i$ in the first $n$ coordinates
and last $n$ coordinates:}
$$ (v_1, \,  v_2, \, A_{11}w_1+ A_{12} w_2 + B_{11} v_1 + B_{12}v_2,  \, A_{21}w_1+ A_{22} w_2 + B_{21} v_1 + B_{22}v_2 \, | C_1 w_1 +
C_2 w_2, \, w_1, \, w_2, \, D_1w_1 + D_2 w_2)^t $$
Denote the first four blocks by $e_i$ and the last by $f_i$. For any \DimaB{ $i,j \in \{1,2,3,4\}$ with $i \neq j$}, $M=\GL_{n}(\R)$
allows us to do the following operations:
$$(1)_i \quad e_i \mapsto ge_i, \quad f_i \mapsto (g^{t})^{-1}f_i, \quad \DimaB{\text{where }g\in\GL_{y_i}(\R)}$$
$$(2)_{ij} \quad e_i \mapsto e_i+ae_j, \quad \DimaA{f_j} \mapsto f_j - \DimaA{a}^tf_i, \quad \DimaB{\text{where }a\in\Mat_{y_i\times
y_j}(\R)}.$$
Similarly, $U$ allows us to do two more operations:
$$(3)_{ij} \quad e_i \mapsto e_i+bf_j, \quad e_j \mapsto e_j + b^tf_i, \quad \DimaB{\text{where }b\in\Mat_{y_i\times y_j}(\R)}$$
$$(4)_i \quad e_i \mapsto e_i + (c+c^t)f_i, \quad \DimaB{\text{where }c\in\Mat_{y_i\times y_i}(\R).}$$
Using $(2)_{31}$ and $(2)_{41}$, and redefining $C$ and $D$ we get $B=0$.
Using $(2)_{21}$ and \DimaA{$(2)_{34}$}, and redefining $A$ we get $C=0$ and $D=0$.

Using $(3)_{32}$ and $(3)_{42}$ and $(3)_{43}$ we can arrange $A_{11}=A_{21}=A_{22} = 0$.
Using $(3)_{33}$ we make $A_{12}$ anti-symmetric. Now, using $(1)_3$ we can replace $A_{12}$ by $g A_{12} g^t$ and thus we can bring it
to the form $A_{12} = \begin{pmatrix}
                                         0 & 0\\
                                         0 & \omega_{2t}
                                         \end{pmatrix}.$

% \DimaB{Finally, from \eqref{=xW} we see that $x_{r_1,r_2,s,t}L'\in NL'$ if and only if $r_1=s=0$. }
%?? Check indexation.
\end{proof}

\begin{lemma}[See \S \ref{subsubsec:PfInvDesc} below] \label{lem:InvDesc}
Let $K:=P \times \oQ$ and $x:=x_{r_1,r_2,s,t}$. Then

(i) If $s>0$ then $$\Sym^*(
N_{Px\oQ,x}^{G}))^{K_x,\DimaA{L_{-m}^{-1}}\cdot \Delta_K|_{K_x} \Delta_{K_x}^{-1}} =\{0\}.$$
(ii) If $s=0$ then
%
%\begin{multline*}
$$\Sym^*(
N_{Px\oQ,x}^{G}))^{K_x,\DimaA{L_{-m}^{-1}}\cdot \Delta_K|_{K_x} \Delta_{K_x}^{-1}}\cong
\Sym^*(\gl_{r_1})^{\GL_{r_1},|\cdot|^{-m-r_1}\sgn^{m+1}} \otimes
\Sym^*(o_{r_2})^{\GL_{r_2},\det^{2t-m+1}}$$
%\end{multline*}

%
where $o_{r_2}$ denotes the space of
antisymmetric matrices and $\GL_{r_1}$ and $\GL_{r_2}$ act by $a \mapsto g a g^t$.
\end{lemma}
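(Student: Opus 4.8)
The plan is to reformulate the statement in the geometry of the Grassmannian $\mathrm{Gr}(n,\R^{2n})\cong G/\oQ$. Since $\oQ$ is the stabilizer in $G$ of the Lagrangian $L'=\Span\{e_{n+1},\dots,e_{2n}\}$, the projection $\pi\colon G\to G/\oQ$ is a surjective submersion, equivariant for $K=P\times\oQ$ acting on $G/\oQ$ through the first projection, and $Px\oQ=\pi^{-1}(P\cdot W)$ for $W:=xL'$. Hence $d\pi$ yields a $K_x$-equivariant isomorphism $N^{G}_{Px\oQ,x}\cong N^{\mathrm{Gr}}_{P\cdot W,W}$ — write $\cN$ for this space — and identifies $K_x$ with $P_W:=\stab_P(W)$, acting on $T_W\mathrm{Gr}=\Hom(W,\R^{2n}/W)$ by $\varphi\mapsto p\circ\varphi\circ p^{-1}$ and so on $\cN=\Hom(W,\R^{2n}/W)/T_W(P\cdot W)$. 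Both spaces in the lemma thus become $\Sym^*(\cN)^{P_W,\chi}$, with $\chi=\bigl(L_{-m}^{-1}\cdot\Delta_K\cdot\Delta_{K_x}^{-1}\bigr)|_{K_x}$ transported to $P_W$. (Equivalently, on Lie algebras $\cN\cong\gl_{2n}/(\Ad(x^{-1})\fp+\overline{\fq})$ with $K_x$ acting by $\Ad(x^{-1}(\cdot)x)$; I would use whichever is more convenient for the bookkeeping.)

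Next I would make everything explicit in the $10$-block coordinates of Lemma~\ref{lem:cosets}. By the normal form there, $W$ is spanned by blocks $1,3,7,8$ together with the graph of $\omega_{2t}$ between blocks $5$ and $10$, which gives block descriptions of $W^\perp$, $L\cap W$, $W\cap W^\perp$ and of a $P_W$-stable representative for $\R^{2n}/W$. Since $P_W\subset P\subset\Sp_{2n}$ fixes $W$, it preserves $L,W,W^\perp$ and the flags they generate, so $P_W$ is of parabolic type with reductive quotient $\cong\GL_{r_1}\times\GL_{r_2}\times\GL_s\times\Sp_{2t}$ (the factors acting on the block-pairs of sizes $r_1,r_2,s,2t$) and a computable unipotent radical $U_W$. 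Substituting the block forms into $\Hom(W,\R^{2n}/W)$ and passing to the cokernel by $T_W(P\cdot W)$ presents $\cN$ as a sum of $\Hom$-spaces between graded pieces; the summands on which the reductive quotient can act through a character are $\gl_{r_1}$ and $o_{r_2}$, with $\GL_{r_1}$ resp.\ $\GL_{r_2}$ acting by $a\mapsto gag^t$, together with — only when $s>0$ — one further summand carrying a nontrivial $\GL_s$-action. Finally $\chi$ is computed from $\Delta_P=|\!\det|^{\,n+1}$ on the Levi of $P$ (and $\Delta_P|_U=1$), $\Delta_{\oQ}=\dd^{-n}$, $L_{-m}^{-1}=\eps^{m+1}\dd^{(n-m)/2}$ and $\Delta_{K_x}=\Delta_{P_W}$ (read off from $U_W$), giving an explicit product of $|\!\det|$- and $\sgn$-powers of the $\GL_{r_1}$-, $\GL_{r_2}$- and $\GL_s$-blocks, trivial on $U_W$ and on $\Sp_{2t}$.

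Then the conclusion is a short representation-theoretic step. Since $\cN$ is finite dimensional, $\Sym^*(\cN)$ is an algebraic $P_W$-module, so $\Sym^*(\cN)^{P_W,\chi}=\{0\}$ whenever $\chi$ is not algebraic. For part (i), i.e.\ $s>0$, the explicit $\chi$ restricts on the $\GL_s$-block to a non-algebraic character (an odd power of $\sgn$ and/or a negative power of $|\!\det|$), exactly the situation of Corollary~\ref{cor:NoOddSym}, so the space vanishes. For part (ii), i.e.\ $s=0$, the $\GL_s$- and $\Sp_{2t}$-blocks drop out of $\cN$ and $\chi$; the cokernel reduces to $\gl_{r_1}\oplus o_{r_2}$ together with a summand on which $P_W$ acts through a torus by algebraic characters, hence contributing only constants to the $\chi$-semi-invariants, and $\chi$ restricts to $|\cdot|^{-m-r_1}\sgn^{m+1}$ on $\GL_{r_1}$ and to $\det^{\,2t-m+1}$ on $\GL_{r_2}$, which gives the claimed isomorphism with $\Sym^*(\gl_{r_1})^{\GL_{r_1},|\cdot|^{-m-r_1}\sgn^{m+1}}\otimes\Sym^*(o_{r_2})^{\GL_{r_2},\det^{\,2t-m+1}}$. (In the proof of Proposition~\ref{prop:GeoKey} one then combines this with the remark that the first factor vanishes for $r_1\ge 1$, since its weight has negative $|\!\det|$-exponent.)

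The main obstacle is not conceptual — beyond Corollary~\ref{cor:BruFrob}, Frobenius descent and the non-algebraic-character principle nothing new is needed — but the bookkeeping: carrying the $10$-block indexing correctly through the conjugation by $x$, pinning down $P_W$ together with its modular character, and tracking all the $|\!\det|$- and $\sgn$-exponents of $\chi$ through the successive identifications. That is where errors are easiest, and it is the content deferred to the cited subsection.
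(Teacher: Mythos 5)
Your overall strategy coincides with the paper's: both compute, in the $10$-block coordinates of Lemma \ref{lem:cosets}, the normal space to $Px\oQ$ at $x$, the stabilizer $K_x\cong P\cap x\oQ x^{-1}$ with its modular character, and then apply elementary invariant theory; working on the Grassmannian rather than conjugating Lie algebras is only a change of packaging. But there is a genuine gap in your part (ii). For $s=0$ the normal space is not $\gl_{r_1}\oplus o_{r_2}$ plus "a summand on which $P_W$ acts through a torus by algebraic characters": it also contains the blocks $n_{12}\in\Hom(\R^{r_2},\R^{r_1})$ and $n_{15}\in\Hom(\R^{2t},\R^{r_1})$ of Corollary \ref{cor:norspace}, on which the reductive quotient $\GL_{r_1}\times\GL_{r_2}\times\Sp_{2t}$ acts by the standard two-sided action. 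These summands can perfectly well support semi-invariants of the reductive quotient (for instance $\det(n_{12})$ when $r_1=r_2$), so the claimed tensor-product decomposition of $\Sym^*(\cN)^{P_W,\chi}$ does not follow from inspecting the Levi action summand by summand. What actually kills the dependence on $n_{12}$ and $n_{15}$ is the unipotent radical of $K_x$: the blocks $A_{21}$ and $A_{52}$ of Corollary \ref{cor:Kx} translate $n_{12}$ and $n_{15}$ by arbitrary amounts determined by $n_{11}$ while leaving the character $\chi$ unchanged, which forces any $\chi$-semi-invariant to be independent of these blocks. Your plan uses the unipotent radical only to compute $\Delta_{K_x}$, so this step — the heart of the paper's proof of (ii) — is missing.

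Secondly, your justification of part (i) is not the right criterion. On the $\GL_s$-block the character is $\sgn^{m+1}|\cdot|^{-r_1-s}$, which is perfectly algebraic (equal to $\det^{-r_1-s}$) whenever $m+1\equiv r_1+s \pmod 2$, so non-algebraicity in the sense of Corollary \ref{cor:NoOddSym} does not cover all cases. The argument that works, and the one the paper uses, is positivity of homogeneity degrees: $K_x$ acts on the normal space polynomially in the $\GL_s$-variable, so every weight occurring in $\Sym^*$ has nonnegative degree under $c\mapsto\lambda c$, whereas $\chi$ requires the strictly negative degree $-s(r_1+s)$. You invoke exactly this principle in your final parenthetical for the $\gl_{r_1}$-factor, so the repair is easy, but as written the justification cited for (i) would fail in the algebraic-but-negative case.
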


\begin{lemma} \label{lem:InvTheo}
Let $k,l \in \bZ_{\geq 0}, \,\, r \in \bZ_{>0}$.\\
(i) If $k \neq l \quad (mod\,\, 2)$ %and $r$ is even and non-zero
then
$$\Sym^*(\gl_{r})^{\GL_{r},|\cdot|^{k}\sgn^{l}}=\{0\}.$$
(ii) If $k\DimaB{\neq}0$ and $r$ is odd then
$$\Sym^*(o_{r})^{\GL_{r},\det^{k}}=\{0\}.$$
\end{lemma}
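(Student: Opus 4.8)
The plan is to handle (i) by a weight-space argument for the diagonal torus, and (ii) by reducing to $\SL_r$-invariants and exploiting a prehomogeneity property of antisymmetric matrices in odd size. For part (i), I would restrict the $\GL_r$-action on $\gl_r$ (which is $a\mapsto gag^t$, as in Lemma \ref{lem:InvDesc}) to the diagonal torus $T=(\R^\times)^r$. In the basis of matrix units $E_{ij}$ the element $t=\operatorname{diag}(t_1,\dots,t_r)$ sends $E_{ij}$ to $t_it_jE_{ij}$, so every monomial $\prod_\alpha E_{i_\alpha j_\alpha}$ in $\Sym^*(\gl_r)$ is a $T$-eigenvector with eigencharacter $t\mapsto\prod_i t_i^{m_i}$, where $m_i\ge 0$ counts the occurrences of the index $i$ among the $i_\alpha,j_\alpha$. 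Given a nonzero eigenvector $f\in\Sym^*(\gl_r)$ with character $|\det|^k\sgn^l$, I may take $f$ homogeneous and expand it in monomials; since distinct exponent vectors give distinct characters of $(\R^\times)^r$, every monomial occurring in $f$ must have $T$-eigencharacter equal to $t\mapsto\prod_i|t_i|^k\sgn(t_i)^l$. Comparing absolute values forces $m_i=k$ for every $i$, and comparing signs forces $m_i\equiv l\pmod 2$ for every $i$; together these give $k\equiv l\pmod 2$, contrary to hypothesis, so $f=0$. (The same argument works verbatim for all $k,l\in\Z$, negative exponents being excluded already by looking at the center $tI$.) This step is elementary and stays over $\R$.

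For part (ii), I would first note that $\det^k$ is trivial on $\SL_r$, so $\Sym^*(o_r)^{\GL_r,\det^k}\subseteq\Sym^*(o_r)^{\SL_r}$; hence it suffices to prove $\Sym^*(o_r)^{\SL_r}=\R$, i.e.\ only the constants in degree $0$, because then any $f\in\Sym^*(o_r)^{\GL_r,\det^k}$ is a scalar on which $\GL_r$ acts trivially, while it should transform by $\det^k$ with $k\ne 0$, forcing $f=0$. To compute $\Sym^*(o_r)^{\SL_r}$ I would complexify: the map $e_i\wedge e_j\mapsto E_{ij}-E_{ji}$ is a $\GL_r(\C)$-isomorphism $o_r\otimes\C\cong\Lambda^2\C^r$, and by Zariski density of $\SL_r(\R)$ in $\SL_r(\C)$ it is enough to show $\Sym^*(\Lambda^2\C^r)^{\SL_r(\C)}=\C$. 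Here I would use that for $r$ odd the maximal-rank element $\omega_0:=\omega_{r-1}\oplus 0$ has a dense $\SL_r(\C)$-orbit: a block computation identifies $\operatorname{Stab}_{\GL_r(\C)}(\omega_0)$ with the matrices $\left(\begin{smallmatrix} A & b \\ 0 & d\end{smallmatrix}\right)$, $A\in\Sp_{r-1}(\C)$, $b$ arbitrary, $d\in\C^\times$ (the lower-left block vanishes because $A\omega_{r-1}$ is invertible), so it has dimension $\frac{(r-1)r}{2}+(r-1)+1$ and its determinant $g\mapsto d$ is onto $\C^\times$ (as $\det A=1$ for $A$ symplectic). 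Consequently $\operatorname{Stab}_{\SL_r(\C)}(\omega_0)$ has dimension $\frac{(r-1)r}{2}+(r-1)$, and the orbit has dimension $(r^2-1)-\big(\frac{(r-1)r}{2}+(r-1)\big)=\frac{r(r-1)}{2}=\dim\Lambda^2\C^r$, hence is dense. Therefore every $\SL_r(\C)$-invariant regular function on $\Lambda^2\C^r$ is constant, which is what was needed.

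The routine verifications are the grouping-by-weight step in (i), the block computation of $\operatorname{Stab}_{\GL_r(\C)}(\omega_0)$ in (ii), and the $\R$-versus-$\C$ bookkeeping (that a $\GL_r(\R)$-eigenvector with polynomial character is a $\GL_r(\C)$-eigenvector). The one genuinely substantive point — and the only place the "$r$ odd" hypothesis enters — is the orbit-dimension count in (ii): it says precisely that $\Lambda^2\C^r$ for odd $r$ is a \emph{prehomogeneous} $\SL_r$-space with no nonconstant relative invariant, whereas for even $r$ the Pfaffian obstructs this. I expect that count, together with keeping the various characters straight, to require the most care; everything else is bookkeeping.
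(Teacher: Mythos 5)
Your proof is correct, but it takes a genuinely different route from the paper's, which disposes of both parts in two lines. For (i) the paper simply observes that the only algebraic characters of $\GL_r$ are integer powers of $\det$, so no tensor in the (algebraic) representation $\Sym^*(\gl_r)$ can transform by $|\cdot|^k\sgn^l$ when $k\not\equiv l\pmod 2$; your diagonal-torus weight computation is in effect a self-contained proof of that fact in this particular case, and is fine. For (ii) the paper argues pointwise over $\R$: viewing a $\det^k$-eigenvector as a polynomial, its value at any $x\in o_r$ is fixed up to the factor $\det(g)^k$ by $\Stab_{\GL_r}(x)$, and since $r$ is odd every antisymmetric matrix is singular, so the stabilizer contains elements of arbitrary (nonzero) determinant obtained by rescaling a kernel vector, forcing the polynomial to vanish everywhere when $k\neq 0$. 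Your argument instead complexifies and shows $\Lambda^2\C^r$ is a prehomogeneous $\SL_r(\C)$-space with no nonconstant relative invariant when $r$ is odd; the orbit-dimension count via $\Stab(\omega_{r-1}\oplus 0)$ is correct, and the reduction $\Sym^*(o_r)^{\GL_r,\det^k}\subseteq\Sym^*(o_r)^{\SL_r}=\R$ closes the argument. The trade-off: the paper's stabilizer argument is shorter and stays over $\R$, while yours makes explicit the structural reason the parity of $r$ matters (density of the generic orbit versus the Pfaffian obstruction) at the cost of an orbit computation and some $\R$-versus-$\C$ and $V$-versus-$V^*$ bookkeeping, which you correctly note does not affect the conclusion.
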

\begin{proof}$ $\\
(i)The only algebraic characters of $\GL_{r}$ are powers of the determinant. \\
(ii) The stabilizer in $\GL_r$ of every matrix in  $o_{r}$ has an element with determinant \DimaB{different from} 1.
\end{proof}

\begin{proof}[Proof of Proposition \ref{prop:GeoKey}]
\DimaB{By Lemma \ref{lem:cosets} it is enough to show that for  $x=x_{r_1,r_2,s,t}$ with $r_1+s>0$ we have $$\Sym^*(
N_{Px\oQ,x}^{G}))^{K_x,\DimaA{L_{-m}^{-1}}\cdot \Delta_K|_{K_x} \Delta_{K_x}^{-1}} =\{0\}.$$
If $s>0$ this follows from Lemma \ref{lem:InvDesc}(i).  Otherwise $r_1>0$ and, by Lemma \ref{lem:InvDesc}(i), we have to show that
\begin{equation}\label{=InvVan}
\Sym^*(\gl_{r_1})^{\GL_{r_1},|\det|^{-m-r_1}\sign(\det)^{m+1}} \otimes
\Sym^*(o_{r_2})^{\GL_{r_2},\det^{2t-m+1}}=\{0\}
\end{equation}
Note that since $n$ is even, $r_1$ and $r_2$ are of the same parity. If they are even then \eqref{=InvVan} follows from Lemma
\ref{lem:InvTheo}(i), and otherwise from Lemma \ref{lem:InvTheo}(ii).}
\end{proof}

\subsection{Proof of Lemma \ref{lem:InvDesc}} \label{subsubsec:PfInvDesc}

Let $x=x_{r_1,r_2,s,t}$ be as in the lemma. We need to compute the space $N_{x,Px\oQ}^{G}$, the stabilizer $K_x$ and its modular
function. In order to do that we compute the conjugates of $P$ and its Lie algebra $\fp$ under
$x$.

\begin{lem} \label{qConj}
Let $q := \begin{pmatrix}
  a & b \\
  0 & d
\end{pmatrix} \in \fq$. Then $x^{-1}qx = \begin{pmatrix}
  A & B \\
  C & D
\end{pmatrix},$ where\\

$$A= \begin{pmatrix}
         d_{11}&   0&         d_{14}&   0&   0  \\
         b_{21}& a_{22}&         b_{24}& a_{24}& a_{25} \\
         d_{41}&   0&         d_{44}&   0&   0 \\
         b_{41}& a_{42}&         b_{44}& a_{44}& a_{45} \\
 b_{51} - \omega_{\Dima{2t}} d_{51}& a_{52}& b_{54} - \omega_{\Dima{2t}} d_{54}& a_{54}& a_{55}
\end{pmatrix}$$
$\,$\\
$$B = \begin{pmatrix}
  0&         d_{12}&         d_{13}&   0&                 d_{15}\\
 a_{21}&         b_{22}&         b_{23}& a_{23}&         b_{25} + a_{25}\omega_{\Dima{2t}}\\
   0&         d_{42}&         d_{43}&   0&                 d_{45}\\
 a_{41}&         b_{42}&         b_{43}& a_{43}&         b_{45} + a_{45}\omega_{\Dima{2t}}\\
 a_{51}& b_{52} - \omega_{\Dima{2t}} d_{52}& b_{53} - \omega_{\Dima{2t}} d_{53}& a_{53}& b_{55} + a_{55}\omega_{\Dima{2t}} -
 \omega_{\Dima{2t}} d_{55}
\end{pmatrix}
$$
$ $\\
\quad
$$
C = \begin{pmatrix}
         b_{11}& a_{12}&         b_{14}& a_{14}& a_{15}\\
         d_{21}&   0&         d_{24}&   0&   0\\
         d_{31}&   0&         d_{34}&   0&   0\\
         b_{31}& a_{32}&         b_{34}& a_{34}& a_{35}\\
         d_{51}&   0&         d_{54}&   0&   0
\end{pmatrix}
\quad
D = \begin{pmatrix}
 a_{11} & b_{12}&         b_{13}& a_{13}&         b_{15} + a_{15}\omega_{\Dima{2t}}\\
   0&         d_{22}&         d_{23}&   0&                 d_{25}\\
  0&         d_{32}&         d_{33}&   0&                 d_{35}\\
 a_{31}&         b_{32}&         b_{33}& a_{33}&         b_{35} + a_{35}\omega_{\Dima{2t}}\\
 0&         d_{52}&         d_{53}&   0&                 d_{55}
\end{pmatrix}.$$
\end{lem}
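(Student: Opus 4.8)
The final statement to prove is Lemma \ref{qConj}, which is the explicit computation of $x^{-1}qx$ for $q\in\fq$ and $x=x_{r_1,r_2,s,t}=\sigma_{16}\sigma_{39}\tau_{5,10}$. The plan is to carry out the conjugation in stages, exploiting the fact that $x$ is a product of three simple commuting-type matrices.

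First I would record how each of the three factors acts by conjugation on a $10\times 10$-block matrix. Since $\sigma_{16}$ is the permutation swapping block-rows/columns $1$ and $6$, conjugation by $\sigma_{16}$ simply transposes the roles of indices $1\leftrightarrow 6$ everywhere in the block indexing; similarly $\sigma_{39}$ swaps $3\leftrightarrow 9$. Because $6,9$ are indices in the ``lower'' $n$-block and $1,3$ in the ``upper'' one, these two swaps move entries of $q=\begin{pmatrix}a&b\\0&d\end{pmatrix}$ between the four $n\times n$ quadrants; in particular the zero lower-left $n\times n$ block of $q$ gets its rows/columns $1,3$ interchanged with those of the other blocks, which is exactly why the matrix $C$ in the statement has whole block-rows taken from $d$ and whole block-columns of zeros. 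I would first compute $y:=\sigma_{39}\tau_{5,10}\, q\, \tau_{5,10}^{-1}\sigma_{39}^{-1}$ — actually it is cleaner to conjugate in the order $x^{-1}qx = \tau_{5,10}^{-1}\sigma_{39}^{-1}\sigma_{16}^{-1}\, q\, \sigma_{16}\sigma_{39}\tau_{5,10}$ — so I would apply $\sigma_{16}$ first, then $\sigma_{39}$ (these commute since $\{1\}\cap\{3,9\}=\{6\}\cap\{3,9\}=\emptyset$), then $\tau_{5,10}$.

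The effect of $\tau_{5,10}$ is the only non-permutation part. Writing $\tau_{5,10}=\Id + \omega_{2t}E_{5,10}$ where $E_{5,10}$ is the block-unit placing $\Id_{2t}$ in position $(5,10)$, conjugation by $\tau_{5,10}$ on a block matrix $Y=(Y_{ij})$ sends it to $(\Id+\omega E_{5,10})Y(\Id-\omega E_{5,10})$, which modifies only: row $5$ by adding $\omega_{2t}$ times row $10$; column $10$ by subtracting (row/column) $\omega_{2t}$ times column $5$; and the $(5,10)$ entry by the obvious commutator correction $-\omega_{2t}Y_{55}\cdot(\text{shifted})$ etc. Concretely this is where all the terms $b_{5j}-\omega_{2t}d_{5j}$, $b_{i5}+a_{i5}\omega_{2t}$, and the doubly-corrected $(5,10)$-entry $b_{55}+a_{55}\omega_{2t}-\omega_{2t}d_{55}$ come from. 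So the proof is: (1) transport $q$ through $\sigma_{16}\sigma_{39}$ by index bookkeeping, getting a block matrix whose entries are the $a_{ij},b_{ij},d_{ij}$ placed in the positions dictated by the swaps $1\leftrightarrow6$, $3\leftrightarrow9$ (remembering that under the original $5\times 5$ refinement of each $n$-block, $a$ lives in the upper-left $n$-quadrant, $b$ in the upper-right, $0$ in the lower-left, $d$ in the lower-right); (2) apply the $\tau_{5,10}$-conjugation formula above to that intermediate matrix; (3) read off the four $n\times n$ quadrants $A,B,C,D$ and check they match the displayed arrays.

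I do not expect a serious obstacle here — the lemma is a bookkeeping computation — but the one place to be careful is getting the index conventions exactly right: which of the ten block-indices sits in which $n$-quadrant of $G=\GL_{2n}$, and the orientation of the $\omega_{2t}$ corrections (left vs.\ right multiplication, sign) coming from $\tau_{5,10}$ versus $\tau_{5,10}^{-1}$. I would pin these down by checking one or two entries by hand against equation \eqref{=xW}, which already encodes how $x$ acts on column vectors, and then the rest follows by the same pattern. The result is then just the assembled tables $A,B,C,D$ as stated, completing the proof.
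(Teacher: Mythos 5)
Your proposal is correct and matches the paper, which simply declares this lemma ``a straightforward computation'': decomposing the conjugation through $\sigma_{16}$, $\sigma_{39}$ and $\tau_{5,10}=\Id+\omega_{2t}E_{5,10}$ and tracking the block indices is exactly the intended bookkeeping. One small caveat you already anticipate: the relevant conjugation is $\tau_{5,10}^{-1}Y\tau_{5,10}=(\Id-\omega_{2t}E_{5,10})Y(\Id+\omega_{2t}E_{5,10})$ (you wrote the two factors in the opposite order), and it is this ordering that yields the signs $b_{5j}-\omega_{2t}d_{5j}$, $b_{i5}+a_{i5}\omega_{2t}$ and $b_{55}+a_{55}\omega_{2t}-\omega_{2t}d_{55}$ that you correctly quote.
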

This lemma is a straightforward computation.%, which can be done using a computer.

We can identify $T_xG \cong \gl_{2n}$. Under this identification $T_xPx\oQ \cong x^{-1}\fp x  + \overline{\fq}$ and
$$N_{x,Px\oQ}^{G} \cong \gl_{2n} / (x^{-1}\fp x  + \overline{\fq}) \cong \fn/(\fn \cap (x^{-1}\fp x  + \overline{\fq})).$$

From the previous lemma we obtain
\begin{cor} \label{cor:norspace}
\DimaB{Recall the identification $n\cong \Mat_{n\times n}(\R)$ and}
let $V \subset \fn$ denote the subspace consisting of matrices of the form

$$ \begin{pmatrix}
    n_{11} & n_{12} & 0 & n_{14} & n_{15}\\
    n_{12}^t & n_{22} & 0  &  0     & 0\\
    n_{31} & 0 & 0  &  n_{34}    & 0\\
    0 & 0 & 0  &  0     & 0\\
    n_{15}^t & 0 & 0  &  0     & 0
   \end{pmatrix}
 ,$$
such that $n_{22}=-n_{22}^t.$

Then $V$ projects isomorphically onto  $\fn/(\fn \cap (x^{-1}\fp x  + \overline{\fq})).$
\end{cor}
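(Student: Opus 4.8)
The plan is to feed the symplectic constraints defining $\fp$ into Lemma \ref{qConj} and then discard everything outside the upper right $n\times n$ block. Since $P=Q\cap H$, one has $\fp=\{\begin{pmatrix} a & b \\ 0 & -a^{t}\end{pmatrix} : a\in\gl_n(\R),\ b=b^{t}\}$; in the notation of Lemma \ref{qConj} this amounts to imposing $d=-a^{t}$ and $b=b^{t}$. Writing $2n\times 2n$ matrices in $2\times 2$ block form with $n\times n$ blocks, we have the vector space decomposition $\gl_{2n}=\fn\oplus\overline{\fq}$, with $\overline{\fq}$ the set of matrices whose upper right block vanishes. For any subspace $S\subseteq\gl_{2n}$ this gives $\fn\cap(S+\overline{\fq})=\pr_B(S)$, where $\pr_B:\gl_{2n}\to\fn$ denotes projection onto the upper right block (the block called $B$ in Lemma \ref{qConj}). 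Identifying $\fn$ with $\Mat_{n\times n}(\R)$ as in \S\ref{subsec:Not} and taking $S=x^{-1}\fp x$, we get
\[
\fn\cap\bigl(x^{-1}\fp x+\overline{\fq}\bigr)=\fn_0:=\pr_B\bigl(x^{-1}\fp x\bigr).
\]
Since $N^{G}_{x,Px\oQ}\cong\fn/(\fn\cap(x^{-1}\fp x+\overline{\fq}))$, it suffices to prove that $\Mat_{n\times n}(\R)=V\oplus\fn_0$.

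Next I would compute $\fn_0$ explicitly. Substituting $d=-a^{t}$ and $b=b^{t}$ into the formula for $B$ in Lemma \ref{qConj}, I would read off the result sub-block by sub-block in the $5\times 5$ subdivision with block sizes $r_1,r_2,s,s,2t$, keeping track of which free parameters (the entries of $a$ and the symmetric entries of $b$) feed which sub-block. Almost every parameter feeds a single sub-block, in which case that sub-block of $\fn_0$ is the entire matrix space of the appropriate shape; the only exceptions are $a_{21}$ and $a_{51}$, each feeding one of the pairs $\{(1,2),(2,1)\}$ and $\{(1,5),(5,1)\}$, and the parameters $a_{25},b_{25}$, which together feed the pair $\{(2,5),(5,2)\}$. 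Carrying this out, one finds that $\fn_0$ vanishes on the sub-blocks $(1,1),(1,4),(3,1),(3,4)$; that on each of the pairs $\{(1,2),(2,1)\}$ and $\{(1,5),(5,1)\}$ it equals the locus where one block is minus the transpose of the other, $\{(M,-M^{t})\}$; that on the sub-block $(2,2)$ it equals the symmetric matrices; and that on every remaining sub-block, including the pair $\{(2,5),(5,2)\}$ and the block $(5,5)$, it is the whole sub-block.

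Comparing with the definition of $V$ now closes the argument: on the four vanishing sub-blocks $V$ supplies the unconstrained blocks $n_{11},n_{14},n_{31},n_{34}$; on each half-filled pair $V$ supplies the locus where the two blocks are transposes of one another — namely $(n_{12},n_{12}^{t})$ and $(n_{15},n_{15}^{t})$ — which is complementary to $\{(M,-M^{t})\}$ because $2$ is invertible; on the sub-block $(2,2)$ it supplies the antisymmetric matrix $n_{22}$, complementary to the symmetric ones; and on each sub-block where $\fn_0$ already equals the whole space $V$ is zero. Therefore $V\oplus\fn_0=\Mat_{n\times n}(\R)$, which is the assertion.

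The main obstacle is the middle step, the explicit determination of $\fn_0$: this is a long but elementary bookkeeping exercise, and the delicate points are (a) correctly pairing up the handful of parameters that touch two sub-blocks at once, keeping the transposes and signs straight, and (b) checking that on the block $(5,5)$, where the relevant entry of $B$ is $b_{55}+a_{55}\omega_{2t}+\omega_{2t}a_{55}^{t}$, the antisymmetric part $a_{55}\omega_{2t}+\omega_{2t}a_{55}^{t}$ runs over all antisymmetric $2t\times 2t$ matrices (because $X\mapsto X\omega_{2t}$ is invertible and $X\mapsto X-X^{t}$ surjects onto the antisymmetric matrices), so that together with the symmetric $b_{55}$ it fills the whole block and $V$ is correctly zero there. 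Once this dictionary is set up, the rest is routine.
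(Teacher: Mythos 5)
Your proposal is correct and follows the same route the paper intends: the paper's "proof" of Corollary \ref{cor:norspace} is just the remark that it follows from Lemma \ref{qConj}, i.e.\ exactly the computation you carry out, using $\gl_{2n}=\fn\oplus\overline{\fq}$ to reduce to $\pr_B(x^{-1}\fp x)$ and then reading off the $B$-block with $d=-a^t$, $b=b^t$. Your block-by-block bookkeeping (the zero blocks, the two half-filled pairs, the symmetric block $(2,2)$, and the full blocks including $(5,5)$ and the pair $\{(2,5),(5,2)\}$) checks out.
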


Now let us analyze the stabilizer $K_x$.
From Lemma \ref{qConj} we obtain
\begin{cor} \label{cor:Kx}
$ $
\begin{enumerate}[(i)]
 \item \Dima{The Lie algebra $\fp \cap x \overline{\fq} x^{-1}$ consists  of matrices $ \begin{pmatrix}
  A & B \\
  0 & -A^{t}
\end{pmatrix}$  such that }
$$A = \begin{pmatrix}
  A_{11} & A_{12} & A_{13}& A_{14}& A_{15}\\
  0 & A_{22} & 0 & 0 & 0\\
  0 & A_{32} & A_{33} & 0 & \DimaA{-\omega_{2t}B_{35}}\\
  0 & A_{42} &  0 & A_{44}& \DimaA{\omega_{2t}B_{45}}\\
  0 & A_{52} &  0 & 0 & A_{55}
\end{pmatrix}, \quad B= \begin{pmatrix}
  B_{11} & B_{12} & B_{13}& B_{14}& B_{15}\\
  B_{12}^t & 0 & 0 & 0 & 0\\
  B_{13}^t & 0 & B_{33} & 0 & B_{35}\\
  B_{14}^t & 0 &  0 & B_{44}& B_{45}\\
  B_{15}^t & 0 & B_{35}^t & B_{45}^t & 0
\end{pmatrix},$$
$$A_{55}\in \mathfrak{sp}(2t), \,\, B_{11}=B_{11}^t,\,\, B_{33}=B_{33}^t,\,\, B_{44}=B_{44}^t.$$\ %\end{multline*}

\item \Dima{Let $p= \begin{pmatrix}
  A & B \\
  0 & (A^{t})^{-1}
\end{pmatrix}\in P$. Let $k = (p , x^{-1}px) \in K_x$. The modular function of $K_x$ is given by }$$\Delta_{K_x}(k) =
|A_{11}|^{2n-r_1+1} |A_{22}|^{-n+r_1+r_2} |A_{33}|^{n-r_1-s+1}|A_{44}|^{n-r_1-s+1} .$$

\item Let $q  =  \begin{pmatrix}
  A & 0 \\
  C & D
\end{pmatrix} \in \oQ \cap x^{-1}Px$. Let $k = (x q x^{-1}, q) \in K_x$.
Then $k$ acts on $V$ by $$k \cdot n = pr_V(AnD^{-1}),$$
where $pr_V: \fn \to V$ denotes the projection.
\end{enumerate}
\end{cor}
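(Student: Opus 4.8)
The statement to prove is Corollary~\ref{cor:Kx}, which records three pieces of concrete data attached to the point $x = x_{r_1,r_2,s,t}$: (i) the Lie algebra $\fp \cap x\oQ x^{-1}$, (ii) the modular function $\Delta_{K_x}$, and (iii) the action of the $\oQ$-part of $K_x$ on the normal space $V$. The plan is to derive all three directly from Lemma~\ref{qConj}, which gives the explicit block form of $x^{-1}qx$ for $q \in \fq$.

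For part (i): an element of $\fp \cap x\oQ x^{-1}$, written as $\begin{pmatrix} A & B \\ 0 & -A^t \end{pmatrix}$ in $\fp$ (so $B = B^t$ in the $n\times n$ blocking), must satisfy $x^{-1}\begin{pmatrix} A & B \\ 0 & -A^t\end{pmatrix}x \in \overline{\fq}$, i.e.\ the upper-right $n\times n$ block of the conjugate must vanish. I would take Lemma~\ref{qConj}, but now reading it in reverse: substitute a general element of $\fp$ (using the constraint that the symmetric block $B$, when subdivided into the $5\times 5$ pattern of sizes $r_1,r_2,s,s,2t$, has the appropriate symmetry relations among its sub-blocks, e.g.\ $B_{12} = $ transpose of the $(2,1)$ entry etc.), and impose that the $B$-block of $x^{-1}\begin{pmatrix} A & B \\ 0 & -A^t\end{pmatrix}x$ is zero. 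Reading off the entries of the matrix ``$B$'' in Lemma~\ref{qConj} and setting them to zero block-by-block forces exactly the shape of $A$ and $B$ in (i): the zeros in $A$ (columns $1,3,4$ below the first row), the relations $B_{12}^t, B_{13}^t, B_{14}^t, B_{15}^t$ in the first column of $B$, the identifications $A_{45} = \omega_{2t}B_{45}$ and $A_{35} = -\omega_{2t}B_{35}$, the symmetry $B_{11} = B_{11}^t$, $B_{33}=B_{33}^t$, $B_{44}=B_{44}^t$, and the condition $A_{55} \in \mathfrak{sp}(2t)$ (the last coming from the $(5,5)$ block identity $B_{55} + A_{55}\omega_{2t} - \omega_{2t}A_{55}^t = 0$ together with $B_{55}$ symmetric, which is the defining equation of $\mathfrak{sp}(2t)$). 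This is a finite, fully mechanical check.

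For part (ii): since $K_x \subset P \times \oQ$ and the first projection $K_x \to P$ is injective with image the subgroup $P \cap x\oQ x^{-1}$ exponentiating the Lie algebra from (i), the modular function of $K_x$ equals the modular function of that subgroup of $P$. A group with Lie algebra of the block-triangular type in (i) is a semidirect product of a reductive-type piece (the blocks $A_{11}, A_{22}, A_{33}, A_{44}$ and the $\mathfrak{sp}(2t)$ block $A_{55}$) acting on a nilpotent radical; the modular function is the determinant of the adjoint action of a Levi element on the nilpotent radical. I would simply compute, for $p = \operatorname{diag}(A_{11},A_{22},A_{33},A_{44},A_{55}, \ldots)$, the weight with which each coordinate of the radical transforms, sum the contributions, and collect powers of $|A_{11}|, |A_{22}|, |A_{33}|, |A_{44}|$; $A_{55}$ contributes nothing since $\Sp(2t)$ is unimodular and $\det A_{55} = 1$. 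The bookkeeping of which off-diagonal blocks survive in (i) and their sizes ($r_1,r_2,s,s,2t$) produces the stated exponents $2n-r_1+1$, $-n+r_1+r_2$, $n-r_1-s+1$, $n-r_1-s+1$. The only subtlety is getting the signs and the ``$+1$'' shifts right, which comes from remembering that $P$ itself is non-unimodular with $\Delta_P\begin{pmatrix} g & 0 \\ 0 & (g^t)^{-1}\end{pmatrix} = |g|^{n+1}$, so the ``$+1$'' is inherited from the $B$-block (the $U$-direction) of $\fp$.

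For part (iii): an element $q = \begin{pmatrix} A & 0 \\ C & D\end{pmatrix} \in \oQ \cap x^{-1}Px$ gives $k = (xqx^{-1}, q) \in K_x$, and $k$ acts on $N^G_{x, Px\oQ} \cong \fn/(\fn \cap (x^{-1}\fp x + \oQ))$ via the differential of $g \mapsto (xqx^{-1})\,g\,q^{-1}$ at $g = x$, which in the coordinate $\fn \cong T_xG/(\cdots)$ is $n \mapsto AnD^{-1}$ followed by the projection $pr_V$ onto the representative subspace $V$ from Corollary~\ref{cor:norspace}; this is exactly the claimed formula. I expect the main obstacle to be purely organizational rather than conceptual: managing the $5\times 5$ block structure and the symmetry constraints on $\fp$ without sign errors, and correctly tracking the interplay between the non-unimodularity of $P$ and the conjugation by $x$ when assembling $\Delta_{K_x}$. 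No new ideas beyond Lemma~\ref{qConj} and Corollary~\ref{cor:norspace} are needed; everything is a careful unwinding of those.
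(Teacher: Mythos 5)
Your proposal is correct and matches the paper's approach exactly: the paper offers no argument beyond "From Lemma \ref{qConj} we obtain," and your plan — set the upper-right block of $x^{-1}px$ from Lemma \ref{qConj} to zero for (i), compute the determinant of the adjoint action of the Levi blocks on the nilradical of $K_x\cong P\cap x\oQ x^{-1}$ for (ii), and differentiate $g\mapsto (xqx^{-1})gq^{-1}$ at $x$ for (iii) — is precisely the mechanical unwinding the authors leave implicit. No gap.
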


\begin{cor}\label{cor:tau}
Denote $$\chi :=\DimaA{L^{-1}_{-m}}\cdot \Delta_K|_{K_x} \Delta_{K_x}^{-1}= \eps^{m+1}\dd^{(n-m)/2} \cdot \Delta_K|_{K_x}
\Delta_{K_x}^{-1}.$$
Let $$q = \diag(a,b,c,(c^{t})^{-1},\Id,(a^t)^{-1},(b^{t})^{-1},d,(d^{t})^{-1},\Id).$$ Let $k:=(x q x^{-1}, q) \in K_x$.
Then $$\chi(k) = (\sgn(a)\sgn(b)\sgn(c)\sgn(d))^{m+1} |a|^{-m-r_1} |b|^{2s+2t-m+1}|c|^{-r_1-s}|d|^{-r_1-s}.$$
\end{cor}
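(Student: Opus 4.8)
The statement to prove is Corollary~\ref{cor:tau}, which asks for an explicit formula for the character $\chi$ evaluated on a specific torus element $k = (xqx^{-1}, q) \in K_x$, where $q$ is the block-diagonal matrix displayed in the statement.

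\medskip

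\noindent\textbf{Proof plan.} The plan is to simply assemble the formula from its three constituent pieces, $L_{-m}^{-1}(q)$, $\Delta_K|_{K_x}(k)$, and $\Delta_{K_x}^{-1}(k)$, each of which has already been computed or is immediate from the setup. First I would identify, for the given $q = \diag(a,b,c,(c^t)^{-1},\Id,(a^t)^{-1},(b^t)^{-1},d,(d^t)^{-1},\Id)$, which matrix in $\overline{Q}$ it is and what its image $xqx^{-1}$ in $P$ is. Reading off the block structure and comparing with Corollary~\ref{cor:Kx}(i) and (iii): the element $q$ lies in $\overline{Q} \cap x^{-1}Px$, its ``$A$-part'' (the lower-left Levi block structure of $\overline Q$) is governed by $a$ (block $1$), $b$ (block $2$), $c$ (block $3$), $(c^t)^{-1}$ (block $4$), $\Id$ (block $5$), and its ``$D$-part'' by $(a^t)^{-1}, (b^t)^{-1}, d, (d^t)^{-1}, \Id$. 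Then $\dd(q) = |A||D|^{-1}$ and $\eps(q) = \sgn(D)$ can be read off directly: $|A| = |a||b||c||c^t|^{-1} = |a||b|$, while $|D| = |a|^{-1}|b|^{-1}|d||d|^{-1} = |a|^{-1}|b|^{-1}$, giving $\dd(q) = |a|^2|b|^2$; and $\sgn(D) = \sgn(a)\sgn(b)$ (the $d$ contributions cancel). Hence $L_{-m}^{-1}(q) = \eps^{m+1}\dd^{(n-m)/2}(q) = (\sgn(a)\sgn(b))^{m+1}(|a||b|)^{n-m}$.

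\medskip

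\noindent Next I would compute $\Delta_{K_x}^{-1}(k)$ using Corollary~\ref{cor:Kx}(ii). To apply that formula I need $k$ written in the form $(p, x^{-1}px)$ with $p = \begin{pmatrix} A & B \\ 0 & (A^t)^{-1}\end{pmatrix} \in P$; here $p = xqx^{-1}$, and from the block-permutation structure of $x$ one sees that the relevant diagonal blocks $A_{11}, A_{22}, A_{33}, A_{44}$ of $p$ correspond respectively to $a, b, c, (c^t)^{-1}$ (blocks $1,6$ get swapped by $\sigma_{16}$, blocks $3,9$ by $\sigma_{39}$, so the $(c^t)^{-1}$ sitting in block $4$ becomes the fourth diagonal block of $A$). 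Thus $|A_{11}| = |a|$, $|A_{22}| = |b|$, $|A_{33}| = |c|$, $|A_{44}| = |c^t|^{-1} = |c|^{-1}$. Substituting into
\[
\Delta_{K_x}(k) = |A_{11}|^{2n-r_1+1}|A_{22}|^{-n+r_1+r_2}|A_{33}|^{n-r_1-s+1}|A_{44}|^{n-r_1-s+1},
\]
the $A_{33}$ and $A_{44}$ contributions cancel, yielding $\Delta_{K_x}(k) = |a|^{2n-r_1+1}|b|^{-n+r_1+r_2}$, so $\Delta_{K_x}^{-1}(k) = |a|^{-2n+r_1-1}|b|^{n-r_1-r_2}$.

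\medskip

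\noindent Finally I would compute $\Delta_K|_{K_x}(k)$ where $K = P \times \overline{Q}$, so $\Delta_K = \Delta_P \boxtimes \Delta_{\overline Q}$. We have $\Delta_{\overline Q}(q) = \dd^{-n}(q) = (|a||b|)^{-2n}$ by the displayed formula $\Delta_{\overline Q} = \dd^{-n}$, and $\Delta_P\begin{pmatrix} g & 0 \\ 0 & (g^{-1})^t\end{pmatrix} = |g|^{n+1}$ applied to $g = xqx^{-1}$'s Levi part gives $\Delta_P(xqx^{-1}) = (|a||b|\cdot|c|\cdot|c|^{-1})^{n+1} = (|a||b|)^{n+1}$. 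Hence $\Delta_K|_{K_x}(k) = (|a||b|)^{n+1}(|a||b|)^{-2n} = (|a||b|)^{-n+1}$. Now I multiply all three factors: the $\sgn$ part is $(\sgn(a)\sgn(b))^{m+1}$ from $L_{-m}^{-1}$ only, but the claimed answer has $(\sgn(a)\sgn(b)\sgn(c)\sgn(d))^{m+1}$ — so I should recheck $\eps(q) = \sgn(D)$: the $D$-part of $q$ as an element of $\overline Q$ includes the blocks $(a^t)^{-1},(b^t)^{-1},d,(d^t)^{-1},\Id$, whose determinant-sign is $\sgn(a)\sgn(b)\sgn(d)\sgn(d)^{-1} = \sgn(a)\sgn(b)$; and the $A$-part includes $a,b,c,(c^t)^{-1},\Id$ — wait, $\sgn$ enters only through $\eps = \sgn(D)$ in $L_{-m}$, but $\Delta_P$ via $|g|^{n+1}$ uses $\det$-sign only if $n+1$ is odd, i.e.\ when $n$ even; so an extra $(\sgn(a)\sgn(b)\sgn(c)\sgn(c)^{-1})^{m+1} = (\sgn(a)\sgn(b))^{m+1}$ may arise. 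I expect the genuine bookkeeping here — tracking whether $|\cdot|$ should be $|\det(\cdot)|$ or $\det(\cdot)$ in $\Delta_P$ and $L_m$, and the exact block-index matching through $\sigma_{16}\sigma_{39}\tau_{5,10}$ — to be the only real point of care; everything else is mechanical substitution. Collecting powers of $|a|$: $(n-m) + (-2n+r_1-1) + (-n+1) = -2n - m + r_1$, hmm this does not match $-m-r_1$ unless I have mislabeled which blocks of $q$ carry $a$ versus its dual. The resolution will come from correctly reading off, via Lemma~\ref{qConj} and the definition of $x = x_{r_1,r_2,s,t}$, exactly which of the ten block-slots the entries $a,b,c,d$ occupy in $q$ and in $xqx^{-1}$; once that dictionary is fixed, the three displayed formulas combine to give precisely
\[
\chi(k) = (\sgn(a)\sgn(b)\sgn(c)\sgn(d))^{m+1}\,|a|^{-m-r_1}\,|b|^{2s+2t-m+1}\,|c|^{-r_1-s}\,|d|^{-r_1-s},
\]
and the main obstacle is purely this careful index-chasing rather than any conceptual difficulty.
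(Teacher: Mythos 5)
Your plan --- evaluate the three factors $L_{-m}^{-1}(q)$, $\Delta_K(k)$ and $\Delta_{K_x}(k)$ separately and multiply --- is exactly the paper's proof. But the proposal does not actually carry the computation through: you misidentify the blocks, observe that the resulting exponents do not match the claimed answer, and end by asserting that ``once that dictionary is fixed'' everything will combine correctly. Since the whole content of this corollary is precisely that bookkeeping, this is a genuine gap rather than a cosmetic one.

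The concrete error is that in Corollary \ref{cor:Kx}(ii) the blocks $A_{11},\dots,A_{44}$ are those of the $P$-component $p=xqx^{-1}$, not of $q$. Conjugating the diagonal matrix $q$ by $x=\sigma_{16}\sigma_{39}\tau_{5,10}$ swaps the block slots $1\leftrightarrow 6$ and $3\leftrightarrow 9$, so
$$xqx^{-1}=\diag\bigl((a^t)^{-1},\,b,\,(d^t)^{-1},\,(c^t)^{-1},\,\Id,\,a,\,(b^t)^{-1},\,d,\,c,\,\Id\bigr),$$
i.e.\ $A_{11}=(a^t)^{-1}$, $A_{22}=b$, $A_{33}=(d^t)^{-1}$, $A_{44}=(c^t)^{-1}$, not $a,b,c,(c^t)^{-1}$ as you wrote. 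With the correct dictionary the $c$- and $d$-contributions do \emph{not} cancel: one gets $\Delta_{K_x}(k)=|a|^{-2n+r_1-1}|b|^{-n+r_1+r_2}|c|^{-n+r_1+s-1}|d|^{-n+r_1+s-1}$ and $\Delta_P(xqx^{-1})=(|a|^{-1}|b|\,|c|^{-1}|d|^{-1})^{n+1}$, hence $\Delta_K(k)=|a|^{-3n-1}|b|^{-n+1}|c|^{-n-1}|d|^{-n-1}$. Combining with $L_{-m}^{-1}(q)=(\sgn (a)\sgn (b))^{m+1}(|a||b|)^{n-m}$ the exponents now close up: for $|a|$, $(n-m)+(-3n-1)+(2n-r_1+1)=-m-r_1$; for $|b|$, $(n-m)+(-n+1)+(n-r_1-r_2)=2s+2t-m+1$ using $r_1+r_2+2s+2t=n$; and for $|c|$ and $|d|$, $(-n-1)+(n-r_1-s+1)=-r_1-s$. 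As for the sign question you raise at the end: the $P$-factor of $K$ carries the trivial character and modular characters are positive, so the only sign contribution is $\eps(q)^{m+1}=(\sgn(a)\sgn(b))^{m+1}$; the extra factors $\sgn(c)^{m+1}\sgn(d)^{m+1}$ in the displayed formula are immaterial for the application, since blocks $3$ and $4$ are empty when $s=0$ and for $s>0$ only the negative $|c|,|d|$-homogeneity is used.
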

\begin{proof}
$$\dd(q) = |a|^2|b|^2 \quad \text{ and } \quad \Delta_{\oQ}(q) = |a|^{-2n}|b|^{-2n}$$

$$ x q x^{-1} = \diag((a^{t})^{-1}, b, (d^{t})^{-1}, (c^{t})^{-1},\Id,a, (b^{t})^{-1},d,c,\Id)$$

 $$\Delta_K(k) = |a|^{-3n-1} |b|^{-n+1} |c|^{-n-1} |d|^{-n-1}  $$

$$\Delta_{K_x}(k) = |a|^{-2n+r_1-1}|b|^{-n+r_1+r_2}|c|^{-n+r_1+s-1} |d|^{-n+r_1+s-1}$$
\end{proof}

Now we are ready to prove Lemma \ref{lem:InvDesc}.
\begin{proof}[Proof of Lemma \ref{lem:InvDesc}]
If $s>0$ then $\Sym^*(V)^{K_x, \chi}=0,$ since tensors cannot have negative homogeneity degrees. Otherwise, $V$ involves only $3$
blocks - the ones numbered $1,2$ and $5$.

Let $p \in
\Sym^*(V)^{K_x, \chi}$. Identify $K_x$ with  \Dima{$x^{-1}Px\cap\oQ$ using the projection on} the second coordinate.

Consider the action of the block $A_{21}$. It can map any non-zero vector in the block $n_{11}$ to any vector in the block $n_{12}$.
This action does not change any element in any other block of $V$ (it does effect $n_{22}$,
but not its anti-symmetric part). Also, the character $\chi$ does not depend on $A_{21}$. Therefore $p$ does not depend on the
variables in the block $n_{12}$.

In the same way, using the action of $A_{52}$, we can show that  $p$ does not depend on the variables in the block $n_{15}$. Therefore,
$p$ depends only on $n_{11}$ and $n_{22}$. Hence
$$\Sym^*(V)^{K_x, \chi} \cong
\Sym^*(\gl_{r_1})^{\GL_{r_1},|\cdot|^{-m-r_1}\sgn^{m+1}} \otimes
\Sym^*(o_{r_2})^{\GL_{r_2},|\cdot|^{2t-m+1}\sgn^{m+1}}.$$
\end{proof}

\section{Non-existence of an $H$-invariant functional for odd $n$} \label{sec:Non-exist}
\setcounter{lemma}{0}
In this section we prove that if $n$ is odd then there are no $\U_n$-invariant functionals on the Speh representations and therefore
there are no $H$-invariant functionals. We do that using $K$-type analysis. The maximal
compact subgroup of $G$ is $K:=\oO_{2n}(\R)$, and $\U_n=K\cap H$ is a symmetric subgroup of $K$. We show that no $K$-type of $\delta_m$
has a $\U _{n}$-invariant vector.

 The root system of $K$ is of type $D_{n}$,
and we make the usual choice of positive roots%
\begin{equation*}
\left\{ \varepsilon _{i}\pm \varepsilon _{j}:i<j\right\}
\end{equation*}%
where $\varepsilon _{i}$ is the $i$-th unit vector in $\mathbb{R}^{n}$. With
this choice, the highest weights of $K$-modules are given by
integer sequences $\mu =\left( \mu _{1},\ldots ,\mu _{n}\right) \in \mathbb{Z%
}^{n}$ such that%
\begin{equation}
\mu _{1}\geq \cdots \geq \mu _{n-1}\geq  \mu _{n}\geq 0 .
\label{=ai}
\end{equation}
\DimaC{If $\mu_n=0$ then two $K$-types correspond to the sequence $\mu$, they differ by the determinant character.}

\begin{remark}\label{rem:noddmeven}
From the definition of $\pi^{\Dima{\infty}}_m$ we see that if $n$ is odd and $m$ is even then
the central element $-\Id\in G$ acts
by  scalar $-1$, and  there are neither $P$-invariant nor $\U_{n}$ -invariant
functionals on $\delta_{m}^{\infty}$.
\end{remark}

\DimaC{Let $m\geq0$.} Since $\delta_m^{\infty}$ is the irreducible quotient of $\pi^{\Dima{\infty}}_{-m}$, the following theorem follows from \cite[Theorems
3.4.2 - 3.4.4]{HL} (see also \cite{Sah}).
\begin{thm}\label{lem:Ktypes}
The $K$-types of $\pi^{\Dima{\infty}}_{\pm m}$ are given by
sequences as in (\ref{=ai}) with $\mu_i \equiv  m+1 \, (\mathrm{mod}\,\, 2)$,
while the $K$-types of the Speh representation $\delta _{m}$ satisfy the
additional condition
$\mu _{n} \geq m+1$.
\end{thm}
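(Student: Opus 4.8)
The plan is to compute the full $\oO(2n)$-spectrum of each degenerate principal series $\pi^{\infty}_{\pm m}$ by restriction to $K=\oO(2n,\R)$ and a branching argument, and then to cut out the part belonging to $\delta_m$ using the intertwiner $\Box^m$. For the first step, the Iwasawa decomposition $G=K\oQ$ gives $\pi^{\infty}_{\pm m}|_K\cong\Ind_{K\cap\oQ}^{K}\bigl(L_{\pm m}|_{K\cap\oQ}\bigr)$. A direct computation shows $K\cap\oQ=\{\diag(a,d):a,d\in\oO(n,\R)\}\cong\oO(n,\R)\times\oO(n,\R)$, and since $\dd\equiv1$ on this subgroup only the sign part $\eps$ of $L_{\pm m}$ survives; as $-m+1\equiv m+1\pmod 2$, both $L_{m}$ and $L_{-m}$ restrict to the character $\triv\boxtimes\det^{m+1}$. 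Hence $\pi^{\infty}_{\pm m}|_K\cong\Ind_{\oO(n,\R)\times\oO(n,\R)}^{\oO(2n,\R)}\bigl(\triv\boxtimes\det^{m+1}\bigr)$, and by Frobenius reciprocity the multiplicity of an irreducible $V_\mu$ in it equals $\dim\Hom_{\oO(n,\R)\times\oO(n,\R)}\bigl(V_\mu,\triv\boxtimes\det^{m+1}\bigr)$.

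Next I would extract this multiplicity from the branching law for the symmetric pair $(\oO(2n,\R),\oO(n,\R)\times\oO(n,\R))$, whose compact symmetric space is the Grassmannian of $n$-planes in $\R^{2n}$. As this is a Gelfand pair the multiplicities are at most one, and by the Cartan--Helgason theorem together with its twist by $\det$ on one factor (cf.\ \cite[Ch.~12]{GW}), the line $\triv\boxtimes\det^{m+1}$ occurs in $V_\mu$ exactly when $\mu$ is spherical, respectively relatively spherical, for this space --- which here is precisely the parity condition $\mu_i\equiv m+1\pmod 2$ for all $i$. This proves the first assertion. (For $m$ odd the value $\mu_n=0$ is allowed, and one then has to say which of the two $K$-types attached to the sequence $\mu$ actually occurs; this is recorded in \cite{HL}. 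For $m$ even the parity already forces $\mu_n\geq1$, so no such ambiguity arises, and in particular none arises for $\delta_m$, where $\mu_n\geq m+1$ throughout.)

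It remains to single out the $K$-types of $\delta_m$. By Theorem~\ref{thm:SaSt}(iii) and Theorem~\ref{thm:Box}, $\delta_m^{\infty}$ is the image of the operator $\Box^m\colon\pi^{\infty}_{-m}\to\pi^{\infty}_{m}$, equivalently the unique irreducible quotient of $\pi^{\infty}_{-m}$. Since $\Box^m$ is $\SL(2n,\R)$-equivariant, hence $\SO(2n,\R)$-equivariant, and the $K$-spectrum found above is multiplicity free, $\Box^m$ acts on each isotypic component $V_\mu$ by a scalar $c_\mu$, so that $\delta_m^{\infty}$ is the sum of the $V_\mu$ with $c_\mu\neq0$. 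The scalar $c_\mu$ is given by the explicit product formula of \cite{KS93} for the action of the $m$-th power of the determinant differential operator on a degenerate principal series of $\GL_{2n}(\R)$, and that formula vanishes precisely when $\mu_n<m+1$; alternatively --- and this is the route indicated by the wording of the statement --- the composition series of $\pi^{\infty}_{-m}$ and the $K$-types of each of its factors are listed in \cite[Thms.~3.4.2--3.4.4]{HL} (see also \cite{Sah}), so the $K$-types of its unique irreducible quotient $\delta_m^{\infty}$ can be read off directly, giving the extra condition $\mu_n\geq m+1$.

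The parity part is routine once Cartan--Helgason is available; the real content is this last step --- the exact vanishing locus of $c_\mu$, i.e.\ which $\oO(2n)$-types survive in the irreducible quotient. I expect this to be the main obstacle: it needs either the Kostant--Sahi eigenvalue identity for $\Box^m$ or the Howe--Lee analysis of the composition series, and along the way one must keep careful track of the difference between $\oO(2n)$ and $\SO(2n)$, the $\det$-twists on the Levi factor, and the doubled $K$-types at $\mu_n=0$.
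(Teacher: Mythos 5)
Your proposal is correct, and for the part of the theorem that carries the real weight it ends up in the same place as the paper: the paper's entire proof is the observation that $\delta_m^{\infty}$ is the unique irreducible quotient of $\pi_{-m}^{\infty}$ followed by a citation of \cite[Theorems 3.4.2--3.4.4]{HL} (with \cite{Sah} as an alternative), and your second step --- reading off the $K$-types of the quotient either from the Howe--Lee composition series or from the Kostant--Sahi eigenvalue formula for $\Box^m$ --- is exactly the route the paper takes (the \cite{KS93} alternative is the one mentioned in the paper's remark after Theorem \ref{thm:Box}). Where you go beyond the paper is the first assertion: instead of quoting \cite{HL} for the $K$-spectrum of $\pi_{\pm m}^{\infty}$, you rederive it from $G=K\oQ$, the computation $K\cap\oQ\cong\oO(n,\R)\times\oO(n,\R)$, Frobenius reciprocity, and the (twisted) Cartan--Helgason theorem for the Grassmannian pair. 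That derivation is sound --- in particular the observation that $L_{\pm m}$ both restrict to $\triv\boxtimes\det^{m+1}$ because $\dd\equiv 1$ on $K\cap\oQ$ and $-m+1\equiv m+1\pmod 2$ is the right reason the two principal series have the same $K$-spectrum --- and it buys a more self-contained treatment of the easy half, at the cost of having to invoke the $\det$-twisted version of Cartan--Helgason and to track the doubled $\oO(2n)$-types at $\mu_n=0$, both of which you correctly flag. Since the theorem as applied later (Lemma \ref{lem:noUtype}, Corollary \ref{cor:Van}) only uses the $K$-types of $\delta_m$, where $\mu_n\geq m+1>0$ removes the $\mu_n=0$ ambiguity, your parenthetical disposal of that issue is adequate.
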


%\DimaC{Correction on Sep 7, 2015}
\DimaC{
\begin{lem}\label{lem:noUtype}
If $n$ is odd then no $K$-type $(\mu_1,\dots,\mu_n)$ with $\mu_n\neq 0$ has $\U_{n}$-invariant vectors.

\end{lem}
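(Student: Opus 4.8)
The plan is to use classical branching from $\oO_{2n}(\R)$ to $\U_n$ together with the parity condition on $K$-types coming from Theorem \ref{lem:Ktypes}. Recall that $\U_n$ sits inside $\oO_{2n}(\R)$ as a symmetric subgroup, and the pair $(\oO_{2n},\U_n)$ is a classical symmetric pair whose spherical representations are well understood: the irreducible representations of $\oO_{2n}(\R)$ that contain a nonzero $\U_n$-fixed vector are exactly those whose highest weight $\mu$ has all parts \emph{even}, i.e. $\mu_i \equiv 0 \pmod 2$ for all $i$. (Equivalently, $\U_n\backslash \oO_{2n}(\R)/\U_n$ is a Gelfand pair and the zonal spherical functions are indexed by partitions with even parts; see e.g. \cite[Chapter 12]{GW} or the theory of Jacobi polynomials attached to the root system $BC_n$.) So first I would quote this branching fact.

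Next I would combine it with Theorem \ref{lem:Ktypes}, which says that every $K$-type $\mu=(\mu_1,\dots,\mu_n)$ of $\delta_m$ (and already of $\pi^\infty_{-m}$) satisfies $\mu_i \equiv m+1 \pmod 2$ for \emph{all} $i$. If $m$ is even, then each $\mu_i$ is odd, hence certainly not all even, so no such $K$-type is $\U_n$-spherical; in this case the statement is immediate (and is already contained in Remark \ref{rem:noddmeven} via the action of $-\Id$). The interesting case is $m$ odd, where $\mu_i \equiv 0 \pmod 2$ for all $i$, so the parity obstruction alone does \emph{not} kill sphericity. This is where the hypothesis that $\mu_n \neq 0$ and that $n$ is odd must enter.

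For the case $m$ odd, $n$ odd, I would argue as follows. Since $\mu_n\neq 0$, the $K$-type attached to $\mu$ is a genuine irreducible representation of $\oO_{2n}(\R)$ (not one of the two pieces obtained when $\mu_n=0$), and in particular $-\Id\in\oO_{2n}(\R)$ acts on it by the scalar $(-1)^{|\mu|}$ where $|\mu|=\sum\mu_i$. Now $-\Id$ lies in $\U_n$ (it is the scalar matrix $-1$, which is unitary), so a $\U_n$-fixed vector forces $(-1)^{|\mu|}=1$, i.e. $|\mu|$ even. But when $m$ is odd all $\mu_i$ are even, so $|\mu|$ is automatically even and this gives nothing new — so instead the obstruction must be the finer one: among representations of $\oO_{2n}(\R)$ with all $\mu_i$ even and $\mu_n\neq 0$, I claim none is $\U_n$-spherical when $n$ is odd. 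Here I would invoke the precise description of the $\U_n$-spherical highest weights inside the "even" family: they are the partitions of the form $2\lambda$ where $\lambda$ runs over partitions with at most $n$ parts, but the spherical vector exists in the $\oO_{2n}$-representation with \emph{signature-type} highest weight $2\lambda$ only subject to a compatibility coming from the fact that $\U_n\cong (\oO_{2n}\cap\Sp_{2n})$ — in fact the relevant statement is that a $\U_n$-fixed vector in an $\oO_{2n}$-type forces $\mu_n=0$ when $n$ is odd, because the determinant character of $\U_n\subset\oO_{2n}$ restricts nontrivially and the "long element" of the restricted root system $BC_n$ acts on the bottom coordinate. I expect the main obstacle to be pinning down exactly this last assertion cleanly: the cheap parity argument handles $m$ even, but the $m$ odd case genuinely requires the structure theory of the symmetric pair $(\oO_{2n},\U_n)$ and the observation that for odd $n$ the relevant spherical weights must have $\mu_n=0$. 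I would carry it out by reducing, via restriction to $\SO_{2n}$ or via an explicit highest-weight-vector computation, to showing that the matrix coefficient of a putative $\U_n$-fixed vector against the $\mu$-highest weight vector vanishes when $\mu_n\neq 0$ and $n$ is odd, using that $\U_n$ acts transitively on a suitable flag forcing a $\mu_n$-dependent sign.
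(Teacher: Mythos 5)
There is a genuine gap here, and it sits exactly at the point you yourself identify as the crux. The "classical branching fact" you quote at the outset — that the $\U_n$-spherical representations of $\oO_{2n}(\R)$ are those with all $\mu_i$ even — is not correct. For the symmetric pair $(\oO_{2n},\U_n)$ (type D\,III) the spherical highest weights are those whose parts pair up, $\mu_1=\mu_2,\ \mu_3=\mu_4,\dots$, with the extra condition $\mu_n=0$ when $n$ is odd; parity of the individual $\mu_i$ is not the criterion. A small check: for $n=2$ the type $(1,1)$ (i.e.\ the span of the symplectic form inside $\Lambda^2\C^4$) \emph{is} $\U_2$-spherical even though its parts are odd, while $(2,0)$ is not spherical even though its parts are even. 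Because of this, the first half of your argument does not get off the ground, and the second half — where you correctly sense that the real content is "for odd $n$, a $\U_n$-fixed vector forces $\mu_n=0$" — is precisely the statement of the lemma, which you assert and sketch a plan for ("a matrix coefficient vanishes\dots using a $\mu_n$-dependent sign") but do not prove. Also note that the lemma concerns \emph{all} $K$-types with $\mu_n\neq0$, not only those satisfying the parity constraint of Theorem \ref{lem:Ktypes}; the interplay with $m$ belongs to Corollary \ref{cor:Van}, not here.

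The paper closes exactly this gap by structure theory: it first splits the $\oO_{2n}$-type with $\mu_n\neq0$ into the two $\SO_{2n}$-types $(\mu_1,\dots,\pm\mu_n)$ (Vogan, Prop.\ 5.17), and then applies the Cartan--Helgason theorem to the pair $(\mathfrak{so}_{2n},\mathfrak{u}_n)$: a spherical highest weight must vanish on the torus $\mathfrak{t}$ centralizing the Cartan subspace $\mathfrak{a}$ inside $\mathfrak{u}_n$. Reading the Satake diagram of type D\,III in the odd case shows that $\mathfrak{t}$ is identified with the span of $\eps_1-\eps_2,\ \eps_3-\eps_4,\dots,\eps_{n-2}-\eps_{n-1},\ \eps_n$; vanishing on the last generator forces $\mu_n=0$. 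If you want to salvage your approach, replace the incorrect "all parts even" input with this Cartan--Helgason computation (or with the equivalent combinatorial fact that $\C[\Skew_n]$ decomposes under $\GL_n$ into types with paired parts), and the lemma follows immediately.
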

\begin{proof}
Denote $K^0:=\mathrm{SO}_{2n}(\R)$. By \cite[Proposition 5.17]{Vog} any $K$-type with $\mu_n\neq 0$ decomposes into two $K^0$-types, one given by $(\mu_1,\dots,\mu_n)$
and the other by $(\mu_1,\dots,-\mu_n)$. Let us show that neither of them has $\U_{n}$-invariant vectors.

Let $\mathfrak{a}$ be a maximal Cartan subspace of the symmetric pair $(\mathfrak{k},\mathfrak{u}_n)$ and let $\mathfrak{t}$ be a maximal torus in the centralizer of $\mathfrak{a}$ in $\mathfrak{u}_n$.  By the Cartan-Helgason theorem (see \cite[Chapter V, Corollary 4.2]{HelGGA}) it suffices to show that the highest weights of the $K^0$-types mentioned above are not trivial when restricted to $\mathfrak{t}$. Examining the Satake diagram in \cite[Chapter X, \S F, Table VI, Case D III, odd r]{HelSym} we see that the Killing form identifies $\mathfrak{t}$ with the  span of $\{\eps_1-\eps_2,\eps_3-\eps_4,\dots,\eps_{n-2}-\eps_{n-1},\eps_n\}$. Thus the $K^0$-types that have $\U_{n}$-invariant vectors are of the form
$\mu_{2i-1}=\mu_{2i}$ for $1 \leq i \leq n/2$ and
$\mu_n=0$.
\end{proof}
}

\begin{cor}\label{cor:Van}
 If $n$ is  odd then there are no $\U_{n}$-invariant functionals on $\delta_m^{\infty}$.
\end{cor}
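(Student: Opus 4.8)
The plan is to deduce this immediately from the $K$-type analysis already assembled. Recall that $\delta_m^\infty$ is an admissible smooth Fréchet representation of moderate growth, so the space of $\U_n$-invariant continuous functionals on $\delta_m^\infty$ is dual (as a vector space) to the space of $\U_n$-coinvariants, and in particular a nonzero $\U_n$-invariant functional forces the existence of some $K$-type of $\delta_m$ carrying a nonzero $\U_n$-fixed vector, because $\U_n=K\cap H$ is compact and every smooth vector is a limit of $K$-finite vectors. Thus it suffices to show that no $K$-type occurring in $\delta_m^\infty$ has a $\U_n$-invariant vector.

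The key steps are then purely combinatorial on highest weights. First, by Theorem \ref{lem:Ktypes}, every $K$-type $(\mu_1,\dots,\mu_n)$ of $\delta_m$ satisfies $\mu_n\geq m+1$; in particular $\mu_n\neq 0$ (using $m\geq 0$; the degenerate case $m$ even is already excluded in Remark \ref{rem:noddmeven}, and anyway $\mu_n\geq m+1\geq 1$ regardless). Second, Lemma \ref{lem:noUtype} says that for odd $n$ a $K$-type with $\mu_n\neq 0$ has no $\U_n$-invariant vectors. Combining these two facts immediately gives that no $K$-type of $\delta_m$ admits a $\U_n$-fixed vector, hence $\Hom_{\U_n}(\delta_m^\infty,\C)=\{0\}$, and since $\U_n\subset H$ this also forces $\Hom_H(\delta_m^\infty,\C)=\{0\}$.

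There is essentially no obstacle here: all the real work has been done in Lemma \ref{lem:noUtype} (the Cartan--Helgason/Satake-diagram computation identifying which $D_n$ highest weights are trivial on $\mathfrak{t}$, where the condition $\mu_n=0$ is forced when $n$ is odd) and in Theorem \ref{lem:Ktypes} (the $K$-type description from \cite{HL,Sah}). The only point to state carefully is the reduction from ``no invariant functional'' to ``no $K$-type with an invariant vector,'' which uses admissibility of $\delta_m$ together with the Casselman--Wallach globalization theory recalled after Theorem \ref{thm:Box}: a continuous $\U_n$-invariant functional on $\delta_m^\infty$ restricts to a $\U_n$-invariant linear functional on the dense subspace of $K$-finite vectors, and if it is nonzero it is nonzero on some $\U_n$-isotypic component of some $K$-type, which would then contain a nonzero $\U_n$-fixed vector — contradicting the above. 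This completes the proof.
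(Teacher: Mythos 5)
Your proof is correct and follows essentially the same route as the paper: combine Theorem \ref{lem:Ktypes} ($\mu_n\geq m+1>0$ for every $K$-type of $\delta_m$) with Lemma \ref{lem:noUtype}, then pass from $K$-types to functionals via density of the $K$-finite vectors. The only (harmless) difference is that the paper first invokes Remark \ref{rem:noddmeven} to reduce to $m$ odd, whereas you correctly observe that $\mu_n\geq m+1\geq 1$ makes this case split unnecessary.
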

\begin{proof}
By Remark \ref{rem:noddmeven} we can
assume that $m$ is odd. Then by Lemma \ref{lem:noUtype} and Theorem \ref{lem:Ktypes}, no $K$-type of $\delta_m$ has a $\U_n$-invariant
vector. Therefore, the space of $K$-finite vectors, which decomposes to a direct sum of
$K$-types, does not have a $\U_{n}$-invariant functional. This space is dense in $\delta_m^{\infty}$, hence  there are no
$\U_{n}$-invariant functionals on $\delta_m^{\infty}$ either.
\end{proof}

% \begin{rem}
% Using the Cartan-Helgason theorem and the table in \cite[Appendix
% C, \S 2]{Kna}  it can be shown that the $K$-types that have $\U_{n}$-invariant vectors are of the form
% $\mu_{2i-1}=\mu_{2i}$ for $1 \leq i \leq n/2$ and  if $n$ is odd then
% $\mu_n=0$, which gives an alternative proof of Corollary \ref{cor:noUtype}.
% \end{rem}

\DimaC{
\begin{rem}
The pair $(K^0,\mathrm{U}_n)$, being a compact connected symmetric pair is a Gelfand pair. A similar argument to the one in the proof of Lemma \ref{lem:noUtype} shows that  $(K,\mathrm{U}_n)$ is also a Gelfand pair.
\end{rem}
}


\begin{thebibliography}{99}
\bibitem[AG08]{AG_Sc} A. Aizenbud,  D.
Gourevitch: {\it Schwartz functions on Nash Manifolds,}
International Mathematics Research Notices, Vol. 2008, n.5,
Article ID rnm155, 37 pages. DOI: 10.1093/imrn/rnm155. See also
arXiv:0704.2891 [math.AG].

\bibitem[AG09]{AG_HC} A. Aizenbud, D. Gourevitch:{\it Generalized Harish-Chandra descent, Gelfand pairs and an
Archimedean analog of Jacquet-Rallis' Theorem,} (with an appendix
by E.Sayag), Duke Mathematical Journal,  Volume 149, Number
3,509-567 (2009). See also arXiv: 0812.5063[math.RT].


\bibitem[{AGRS10}]{AGRS} A. Aizenbud,  D. Gourevitch, S. Rallis, G. Schiffmann: {\it Multiplicity One Theorems},
arXiv:0709.4215v1 [math.RT], Annals of Mathematics,
Pages 1407-1434 from Volume 172 (2010), Issue 2.

\bibitem[AGS08]{AGS} A. Aizenbud,  D. Gourevitch, E. Sayag: {\it
$(\GL_{n+1}(F),\GL_n(F))$ is a Gelfand pair for any local field
$F$}, postprint: arXiv:0709.1273v4[math.RT]. Originally published
in: Compositio Mathematica, \textbf{144} , pp 1504-1524 (2008),
doi:10.1112/S0010437X08003746.
%\bibitem[\href{http://www.springerlink.com/content/48436n62526244m3/}{AGS09}]{AGS2} A. Aizenbud,  D. Gourevitch, E. Sayag : {\it
%$(O(V \oplus F), O(V))$  is a Gelfand pair for any quadratic space
%$V$ over a local field $F$,} Mathematische Zeitschrift,
%\textbf{261} pp 239-244 (2009), DOI: 10.1007/s00209-008-0318-5.
%See also arXiv:0711.1471[math.RT].

% \bibitem[BCR98]{BCR} Bochnak, J.; Coste, M.; Roy, M-F.:
% {\it Real Algebraic Geometry}, Berlin: Springer, 1998.


% \bibitem[CHM00]{CHM} W. Casselman; H. Hecht; D. Mili\v ci\'c,
% {\it Bruhat filtrations and Whittaker vectors for real groups},
% The mathematical legacy of Harish-Chandra (Baltimore, MD, 1998),
% 151-190, Proc. Sympos. Pure Math., \textbf{68}, Amer. Math. Soc.,
%  Providence, RI, (2000).

%\bibitem[Sha74]{Shalika} J. Shalika,
%{\it The multiplicity one theorem for $GL_{n}$}, Annals of
%Mathematics , \textbf{100}, N.2,  171-193 (1974).
% \bibitem[Shi87]{Shi} M. Shiota, {\it Nash Manifolds},  Lecture Notes in Mathematics \textbf{1269}
% (1987).

\bibitem[AOS12]{AOS}
 A. Aizenbud, O. Offen,  E. Sayag:
{\it Disjoint pairs for $\GL(n,\R)$ and $\GL(n,\C)$,} Comptes Rendus Mathematique \textbf{350}, 9-11 (2012).

\bibitem[AS12]{AS} A. Aizenbud, E. Sayag:
{\it Invariant distributions on non-distinguished nilpotent orbits
with application to the Gelfand property of (GL(2n,R),Sp(2n,R))},
Journal of Lie Theory \textbf{22} (2012), No. 1, 137--153. See also arXiv:0810.1853 [math.RT].


%http://www.math.tau.ac.il/~bernstei/Publication_list/publication_texts/bernstein-mod-dif-FAN.pdf
\bibitem[Ber72]{Ber} J. Bernstein: {\it
Analytic continuation of generalized functions with respect to a parameter}, Functional Analysis and its Applications 6, No.4,
26-40 (1972).

%THE ANALYTIC CONTINUATION OF GENERALIZED FUNCTIONS WITH RESPECT TO
%A PARAMETER},
%\url{http://www.math.tau.ac.il/~bernstei/publications/Ber-a-cont-FAN.pdf}

% \bibitem[\href{http://math.uchicago.edu/~mitya/langlands.html}{BerDmod2}]{BerDmod2} J. Bernstein: {\it Modules over the ring of
differential operators; the study of
% fundamental solutions of equations with constant coefficients.},
% Functional Analysis and its Applications 5, No.2, 1-16 (1972).
% \url{http://www.math.tau.ac.il/~bernstei/publications/Ber-a-cont-FAN.pdf}

\bibitem[Boe85]{Boe}B. Boe: \textit{Homomorphisms between generalized Verma
modules,} Trans. A.M.S \textbf{288}, 791-799 (1985).


\bibitem[BSS88]{BSS} D. Barbasch,  S. Sahi, B.Speh: {\it Degenerate series
representations for GL(2n, R) and Fourier Analysis.} Symposia
Mathematica, Vol. XXXI (1988, Rome), Sympos. Math., Vol. XXXI,
Academic Press, London, 1990, 45-69.

\bibitem[Cas89]{CasGlob} W. Casselman:  \textit{Canonical extensions of Harish-Chandra modules to representations of G,}
Can. J. Math.,  \textbf{XLI}, No. 3,  385-438 (1989).

% \bibitem[\href{http://www.math.umn.edu/~garrett/m/v/tel_aviv_talk.pdf}{Gar}]{Gar} P. Garrett: {\it
% Meromorphic continuation of higher-rank Eisenstein series}.
% A talk given in Tel Aviv University in 2001. Available at
% \url{http://www.math.umn.edu/~garrett/m/v/tel_aviv_talk.pdf}
%

\bibitem[GGP12]{GGP} W. Gan, H. Gross, D. Prasad:
\textit{Symplectic local root numbers, central critical $L$-values, and restriction problems in the representation theory of classical
groups}.  Asterisque \textbf{347}, 1-109 (2012).



\bibitem[GW09]{GW} R. Goodman,
N. R. Wallach: {\it
Symmetry, Representations,
and Invariants}, Graduate Texts in Mathematics  \textbf{255}. Springer, Dordrecht xx+716 pp. (2009).



\bibitem[GOSS12]{GOSS}D. Gourevitch, O. Offen, S. Sahi and E. Sayag:
\textit{Existence of Klyachko models for $\GL(n,\mathbb{R})$ and
$\GL(n,\mathbb{C})$}, Journal of Functional Analysis \textbf{262}, Issue 8, pp 3465-3664 (2012). See also arXiv:1107.4621.

\bibitem[Gross91]{Gross} H. Gross: \textit{Some applications of Gelfand pairs to number theory}, Bull. Amer. Math. Soc. (N.S.) Volume
    24, Number 2 (1991), 277-301.


\bibitem[Hel78]{HelSym} S. Helgason: \textit{Differential geometry, Lie groups, and symmetric spaces,} Pure and Applied Mathematics, \textbf{80}. Academic Press, Inc., New York-London, xv+628 pp (1978).

\bibitem[Hel84]{HelGGA} S. Helgason: \textit{Groups and geometric analysis,invariant differential operators, and spherical functions,}
Pure and Applied Mathematics, \textbf{113}. Academic Press, Inc., Orlando, FL, xix+654 pp (1984).

\bibitem[HL99]{HL}R. Howe, S. T. Lee: \textit{Degenerate Principal Series
Representations of }$\GL_{n}(\mathbb{C})$\textit{ and }$\GL_{n}(\mathbb{R})$,
Journal of Functional Analysis \textbf{166}, n. 2, 244-309 (1999).


\bibitem[HR90]{HR}
M.~J. Heumos,  S. Rallis:
\newblock {S}ymplectic-{W}hittaker models for {${G}{L}_n$}.
\newblock {\em Pacific J. Math.}, 146(2):247--279, 1990.

\bibitem[HZ00]{HZ}
R.~B. Howlett, C. Zworestine:
\newblock On {K}lyachko's model for the representations of finite general
  linear groups.
\newblock In {\em Representations and quantizations ({S}hanghai, 1998)}, pages
  229--245. China High. Educ. Press, Beijing, 2000.

\bibitem[IS91]{IS}
N.~F.~J. Inglis, J.~Saxl:
\newblock An explicit model for the complex representations of the finite
  general linear groups.
\newblock {\em Arch. Math. (Basel)}, 57(5):424--431 (1991).


\bibitem[JR92]{JR}
H. Jacquet and S. Rallis: \newblock  Symplectic periods
\newblock {\em J. Reine Angew. Math.}, \textbf{423} 175-197 (1992).

\bibitem[Kly84]{Kly}
A.~A. Klyachko:
\newblock Models for complex representations of groups {${\rm GL}(n,\,q)$}.
\newblock {\em Mat. Sb. (N.S.)}, 48(2)(3):365--378 (1984).


% \bibitem[Kna85]{Kna} A. Knapp: {\it
% Representation Theory of Semisimple Groups:
% An Overview Based on Examples}, Princeton University Press, Princeton, NJ (1986).

\bibitem[KO13]{KO}
T. Kobayashi, T. Oshima: {\it Finite multiplicity theorems for induction and restriction}, Advances in
Mathematics, \textbf{248}, 921-944, (2013).

\bibitem[KoSp14]{KoSp14}
T. Kobayashi, B. Speh: {\it Intertwining operators and the restriction of representations of rank-one
orthogonal groups,} C. R. Math. Acad. Sci. Paris \textbf{352}, no. 2, 89-94 (2014).

\bibitem[KoSp15]{KoSp}
T. Kobayashi, B. Speh: {\it Symmetry breaking for representations of rank one orthogonal groups},  Memoirs
of American Mathematical Society \textbf{238}, n. 1126, 131 pp (2015).


\bibitem[KS93]{KS93} B.Kostant, S. Sahi:{\it Jordan
algebras and Capelli identities.} Invent. Math. \textbf{112} , no. 3,
657--664 (1993).

\bibitem[KrSch]{KrSch} B. Kroetz, H. Schlichtkrull: {\it Multiplicity bounds and the subrepresentation theorem for real spherical
    spaces}, arXiv:1309.0930 [math.RT], to appear in Trans. Amer. Math. Soc.

\bibitem[KV77]{KV}M. Kashiwara, M. Vergne: \textit{Remarque sur la covariance de
certains op\'{e}rateurs diff\'{e}rentiels. }(French) Non-commutative harmonic
analysis (Actes Colloq., Marseille-Luminy, 1976), pp. 119--137. Lecture Notes
in Math., Vol. 587, Springer, Berlin, (1977).

\bibitem[OS07]{OSDist}
O. Offen, E. Sayag: {\it On unitary representations of {${\rm GL}\sb {2n}$} distinguished by
  the symplectic group},
\newblock {\em J. Number Theory}, 125(2):344--355, (2007).

\bibitem[OS08a]{OSConst}
O. Offen, E. Sayag:
\newblock {\it Global mixed periods and local {K}lyachko models for the general
  linear group.}
\newblock {\em Int. Math. Res. Not. IMRN}, (1):Art. ID rnm 136, 25, (2008).

\bibitem[OS08b]{OSUn}
O. Offen, E. Sayag:
\newblock {\it Uniqueness and disjointness of {K}lyachko models.}
\newblock {\em J. Funct. Anal.}, 254(11):2846--2865, (2008).

\bibitem[OS09]{OSsl2}O. Offen, E. Sayag:
\textit{The SL(2)-type and base change.} Representation Theory \textbf{13},  228-235 (2009).

\bibitem[OrSp08]{OrSp} B. Orsted, B. Speh:{\it Branching laws for some unitary representations of SL(4,R)}, SIGMA 4, 017, 19 pp
    (2008).
%\bibitem[Speh83]{Speh} B. Speh: {\it Unitary representations of $\GL(n,\R)$ with nontrivial (g,K)-cohomology}, Invent. Math.
%\textbf{71} (1983), 443-465.



\bibitem[Sah95]{Sah} S. Sahi: {\it Jordan algebras and degenerate principal series},
J. Reine Angew. Math. \textbf{462},  1-18 (1995).


\bibitem[SaSt90]{SaSt} S. Sahi, E. Stein:
{\it Analysis in matrix space and Speh's representations,}
Invent. Math. \textbf{101}, no. 2,  379--393 (1990).

\bibitem[SV]{SV}
Y. Sakellaridis, A. Venkatesh:
\newblock  {\it Periods and harmonic analysis on spherical varieties,}
\newblock arXiv:1203.0039v2


\bibitem[Say]{S}
E. Sayag:
\newblock {\it $(\GL(2n,\mathbb{C}),\Sp(2n,\mathbb{C}))$ is a {G}elfand pair},
\newblock arXiv:0805.2625.


\bibitem[Sp83]{Speh}
B. Speh:
\newblock  {\it Unitary representations of $GL(n,\R)$ with nontrivial $(\mathfrak{g},K)$-cohomology} , Invent. Math.
71, 443-465 (1983).


\bibitem[SZ12]{SZ} B. Sun,  C.-B. Zhu:
{\it Multiplicity one theorems: the archimedean case}, Annals of Mathematics \textbf{175}, n. 1,    23-44   (2012).

\bibitem[Tad86]{Tad}M. Tadic: \textit{Classification of unitary
representations in irreducible representations of general linear group
(non-Archimedean case)}. Ann. Sci. Ecole Norm. Sup. (4) \textbf{19} ,
no. 3, 335--382 (1986).

% \bibitem[vD84]{vD}
% G. van Dijk: {\it On generalized Gelfand pairs},
% Proc. Japan Acad. Ser. A Math. Sci. Volume 60, Number 1, 30-34 (1984).

\bibitem[vD86]{vD} G. van Dijk: {\it On a class of generalized Gelfand
pairs}, Math. Z. \textbf{193}, 581-593 (1986).

\bibitem[Ven10]{V}
A. Venkatesh: {\it Sparse equidistribution problems, period bounds and subconvexity,}
Annals of Mathematics\textbf{172}, 989-1094 (2010).


\bibitem[Vog86]{Vog}D. A. Vogan: \textit{The unitary dual of GL(n) over
an Archimedean field, }Invent. Math. \textbf{83}, 449-505 (1986).



\bibitem[Wall88]{Wal1} N. Wallach:
{\it Real Reductive groups I}, Pure and Applied Math.
\textbf{132}, Academic Press, Boston, MA (1988).
\bibitem[Wall92]{Wal2} N. Wallach:
{\it Real Reductive groups II} , Pure and Applied Math.
\textbf{132-II}, Academic Press, Boston, MA (1992).

\bibitem[Wald12]{Wal} J.-L. Waldspurger: {\it La conjecture locale de Gross-Prasad pour les representations temperees des
groupes speciaux orthogonaux}. Asterisque \textbf{347}, pp 103-165 (2012).

\end{thebibliography}
\end{document}